\documentclass[10pt]{article}
\usepackage[dvips]{color}
 \usepackage{amssymb}
 \usepackage{amsmath}
 \usepackage{amsfonts}
 \usepackage{mathrsfs}
\usepackage{amsmath, amsthm, amssymb}
 \newcommand{\be}{\begin{equation}}
 \newcommand{\ee}{\end{equation}}
 \newcommand{\bea}{\begin{eqnarray}}
 \newcommand{\eea}{\end{eqnarray}}
 \newcommand{\nn}{\nonumber}
 \newcommand{\rd}{\partial}
 \newcommand{\mc}{{\mathbb C}}

\newcommand{\mIs}{{{\mathbb I}}^{\,*}}
\newcommand{\mIm}{\widehat{I}_{M}}
\newcommand{\PP}{\boldsymbol{B}^{+}_1}

\newcommand{\Na}{{\mathbb N}}
\newcommand{\End}{\rm End }

\newcommand{\Deg}{\rm deg_{\widehat{\beta}}}

\newcommand{\Cof}{\rm Coef}

 \newtheorem{thm}{Theorem}
 \newtheorem{definition}{Definition}
 \newtheorem{cor}{Corollary}
 \newtheorem{lem}{Lemma}
 \newtheorem{pro}{Proposition}
 \newtheorem*{con}{Convention}
 \newtheorem{Rema}{Remark}
 \makeatletter
 \def\blfootnote{\xdef\@thefnmark{}\@footnotetext}
 \makeatother
 \usepackage[hidelinks]{hyperref}
 
\begin{document}

  \vspace{2cm}

 \begin{center}
    \font\titlerm=cmr10 scaled\magstep3
    \font\titlei=cmmi10 scaled\magstep4
    \font\titleis=cmmi7 scaled\magstep4
     \centerline{\titlerm  Study of the algebra of smooth integro-differential}
     \centerline{\titlerm  operators with applications}

    \vspace{1.5cm}

    \noindent{{\large
       A. Haghany$^{*}$\, and\, Adel Kassaian$^{\dagger}$
         }}
  \end{center}


 \begin{center}
    \font\titleis=cmmi7 scaled\magstep3
 \noindent{{\em Department of Mathematical Sciences, Isfahan University of Technology,\\ { 84156-83111} Isfahan, Iran}}\\
     \noindent{$^{*}$\em aghagh@cc.iut.ac.ir}\\
\noindent{$^{\dagger}$\em a.kassaian@gmail.com}
  \end{center}


 \vskip 2em

\begin{abstract}
We study the algebra of integro-differential operators with smooth
coefficients and kernels on a subspace of $C^{\infty}[a,b]$. We find
a normal form for elements of this algebra and determine its unit
group. The formulation of inverses gives explicit solutions of
inhomogeneous linear Volterra integro-differential equations and
Volterra integral equations of first kind with smooth kernels.\\ \\

{{\bf Keywords:}
        Linear integro-differential operator, Integro-differential equations, Volterra
        integral equation of first kind.}\\

{{\bf Mathematics Subject:}
   16W20;\, 12H05;\, 34A12;\, 45D05;\,45Jxx,\, 46A61;\, 47B38}

\end{abstract}

\vskip 4em
 \section{Introduction }\label{int}

The theory of differential operators and differential polynomials
starts primarily with a commutative algebra $\mathcal{F}$ over a
filed $K$ having a $K$-linear map $\rd:\mathcal{F}\to\mathcal{F}$
satisfying the rule $\rd(a\,b)=\rd(a)\,b+a\, \rd(b)$ for all $a$,
$b$ in $\mathcal{F}$. Then various subalgebras such as
$\mathcal{F}[\rd]$ of $\End_K\mathcal{F}$ are studied and used as a
tool to transfer analytic properties to algebraic ones and
conversely, with application in boundary value problems of
differential equations. This type of research dates back to
\cite{Kaplan} \cite{Ritt} \cite{Kolchin}. However, relatively more
recent research has focused on the notion of integro-differential
algebras arising as certain subalgebras of $\End_K\mathcal{F}$ by
adjoining $\int$ as a right inverse to $\rd$. Integro-differential
operators have been the subject of study from different angles and
many authors have contributed to their studies. In \cite{ij} and
\cite{bav}  algebraic aspects of integro differential operators on a
polynomial algebra has been thoroughly investigated; some other
relevant algebras such as Jacobian algebras and generalized Weyl
algebras are also considered. In the cited papers, for some of these
algebras one finds the ideal structure, characterization of simple
modules, exact values of dimensions (Krull dimension,
Gelfand-Kirillov dimension, weak and global homological dimensions)
and determination of group of automorphisms. See \cite{ros2} in
which integro-differential operators with polynomial coefficients
have been characterized in terms of skew polynomial constructions.
Another equally important motivation for research in algebras of
integro-differential operators is due to the useful role they play
in the general theory of differential equations. It should also be
mentioned that algebras of integro-differential operators have been
considered from both analytical and topological view points.
Recently in \cite{ros4} the authors have introduced a topology in
which not only the ring operations but also differentiation and
integration operators ($\rd$ and $\int$) are uniformly continuous
functions. Then by producing a {\em complete} integro differential
algebra they show that such an algebra enjoys $exp$ and $log$
mappings, hence every monic differential equation possesses a
regular fundamental system of solutions.\\ \\A classical example for
$\mathcal{F}$ is the $\mc$-algebra of complex valued infinitely
differentiable functions $C^{\infty}[a,b]$. There are operators
$H:=x,\hspace{0.1cm} \rd:=\frac{d}{d x}$ and $\hspace{0.1cm}
\int:=\int^{x}_{a}dz$ acting on functions $\phi(x)\in
C^{\infty}[a,b]$ by: $x\phi(x)$, $\frac{d}{d x}\phi(x)$ and
$\int^{x}_{a} \phi(z) dz$ respectively. A natural question is: what
subspaces $W\subset C^{\infty}[a,b]$ are maximal with respect to the
property that $\rd\int\phi=\phi=\int\rd\phi$ \,? If on $\phi(x)\in
C^{\infty}[a,b]$ we have $\rd\int=1=\int\rd$ then $\phi(a)=0$ and
since $\int \rd^{\,n+1}=\rd^{\,n}$ one deduces
$\frac{d^n}{dx^n}\phi_(x)|_{x=a}=0, \hspace{0.2cm} n\ge1$. Such
functions $\phi(x)$ form a unique largest subspace denoted by
$M[a,b]$ on which $\rd$ and $\int$ are two sided inverses. A well
known example of a member of $M[a,b]$ is
the function, \be\phi(x)=\begin{cases}\text{e}^{\,\,1\over \Large{a-x}} & \mbox { if }\,\,\, a < x \le b, \\
0  & \mbox { if }\,\,\,  x = a.
\end{cases}\nn\ee It is well known (e.g. \cite{rudi}) that $C^{\infty}[a,b]$ is a Frechet
space with seminorms $\displaystyle
\|\phi\|_{m}=\sum_{k=0}^{m}\sup_{x \in \,[a,b]}|\rd^k_{x}\phi(x)|$,
$m\in{\mathbb Z}_{+}$ and the metric, \be
d(\phi,\psi)=\sum_{k=0}^{\infty} 2^{-k}{\|\phi-\psi\|_{k}\over
{1+\|\phi-\psi\|_{k}}}.\nn\ee The same applies to $M[a,b]$ since it
is a closed subspace of $C^{\infty}[a,b]$. We note that $M[a,b]$ is
closed under action of Volterra operator $\mathcal{V}$ given by: \be
\mathcal{V}(\phi)(x)=\int^{x}_{a} dz K(x,z) \phi(z),
\hspace{0.5cm}\phi\in M[a,b],\hspace{0.2cm} x\in[a,b], \nn\ee where
the so called Volterra kernel $K(x,y)$ is a smooth function on
$\triangle=\{(x,y)\,|\, a \leq y \leq x\leq b \}$). The space
$M[a,b]$ is also closed under multiplication by a function in
$C^{\infty}[a,b]$. We are interested in the algebra of linear
operators (endomorphisms of $M[a,b]$) generated by  $\rd_{x}$ and
the sets $\{ P(x) |\,P(x)\in C^{\infty}[a,b]\}$ and
$\{\,\int^{x}_{a} f(x,z) dz |\,f(x,y)\in C^{\infty}[\triangle]\}$.
The actions on $\phi(x)\in M[a,b]$ are given by: \be P(x):\phi(x)
\mapsto P(x) \phi(x),\hspace{0.6cm}
 \int^{x}_{a} f(x,z) dz:\phi(x) \mapsto \int^{x}_{a} dz f(x,z) \phi(z)
,\hspace{0.6cm} \rd_{x}:\phi(x) \mapsto
\rd_{x}\phi(x).\label{fati}\ee We call this {\em the algebra of
smooth integro-differential operators} and denote it by $\mIm$. Note
that a linear map $T:M[a,b]\to M[a,b]$ is continuous if and only if
for each $k \in \mathbb{Z}_{+}$ there exist $C>0$ and $m \in
\mathbb{Z}_{+}$ such that for all $\phi\in M[a,b],$\be
\|T(\phi)\|_{\,k}\leq C \| \phi\|_{m}. \nn\ee It is routine to
verify that all the operators in (\ref{fati}) are continuous. The
purpose of this paper is the study of algebra $\mIm$. We shall find
a unique expression for each element of $\mIm$ (Theorem \ref{emam})
and by a detailed analysis of invertibility through projection
operators and a degree function in $\mIs$, an algebra isomorphic to
$\mIm$ (Theorem \ref{ershi}, Definition \ref{khalil} and Proposition
\ref{salar}), we finally determine the unit group $\mathcal{U}$ of
$\mIs$ (Theorem \ref{yarali}).\\\\ The explicit formulation of
inverses allows us to tackle some types of linear integral and
integro-differential equations in Section \ref{app}. The unit group
is partitioned as $\mathcal{U}=W_{1}\cup W_{2}\cup W_{3}$, such that
operators corresponding to elements of $W_1$ can be extended
(Proposition \ref{Haghany}) to operators $C[a,b]\to C[a,b]$. The
application is solution of Volterra integral equation of second kind
(known from classical analysis). Corresponding operators of $W_2$
have extensions $D^{n}[a,b] \to C[a,b]$, where ${D}^{n}[a,b]$ is the
space of $n$ times continuously differentiable functions
$\phi(x):[a,b]\to\mc$ such that ${\rd^{k}_{x}}\phi(x)|_{x=a}=0$ for
$k=0\cdots,n-1$. Application is causal solution of
integro-differential equation (\ref{ghar}) of degree $n$ (also known
from classical analysis). Finally corresponding operators of $W_3$
have extensions $C[a,b]\to D^{n}[a,b]$ by which we obtain a new
result for solution of Volterra integral equation of first kind with
smooth kernel $K(x,y)$ of degree $n$ (see \ref{nsmooth}), that is,
for \be \int^{x}_{a} dz K(x,z) y(z)=g(x), \nn\ee we find the
solution as, \be y(x)=\mathcal{O}(x, \rd_{x})g(x)+\int^{x}_{a} dz
R(x,z)\, g(z),\nn\ee where $\mathcal{O}(x, \rd_{x})$ is determined
explicitly
while $R(x,y)$ is found by a series of integrals.\\

Finally to illustrate the role of function space $M[a,b]$ it should
be mentioned that the spaces $C[a,b]$ and $D^{n}[a,b]$ (for $n\ge1$)
are Banach spaces and the largest subspace of $C[a,b]$ on which all
these invertible operators appear as automorphisms is in fact
Frechet space $M[a,b]$. This is because \be
M[a,b]=\bigcap_{n=1}^{\infty} D^{n}[a,b].\nn\ee


 \section {Algebra of smooth integro-differential operators on $M[a,b]$}
 \label{melika}

For the rest of this paper we adopt the following notations and definitions.\\

 $\mathbb{N} :=\{1, 2, \ldots \}$ is the set of natural numbers.
 $\mathbb{Z}_{+} :=\{0,1, 2, \ldots \}$ is the set of non-negative
 integers.
 $\mc$ is the
field of complex numbers. $a$ and $b$ are fixed real parameters,
$a<b$. $C[a,b]$ is the space of complex valued continuous functions
on $[a,b]$. $C^{\infty}[a,b]$ is the space of smooth functions on
$[a,b]$. \,$\boldsymbol{B_1}$ is the algebra of smooth functions
$P(x):[a,b] \to \mc$ with point-wise addition and multiplication of
functions. $\triangle$ is the subset of $[a,b] \times [a,b]$ defined
by $\triangle=\{(x,y)\,|\, a \leq y \leq x\leq b \}$.
$C^{\,n}[\Delta]$ is the space of functions $f(x,y):\triangle \to
\mc$, for which all partial derivatives
$\rd^{\,i}_{x}\rd^{\,j}_{y}f(x,y)$ and
$\rd^{\,i}_{y}\rd^{\,j}_{x}f(x,y)$, where $i+j=k$ ($k=0,1,\cdots,
n$), exist and are continuous. $\boldsymbol {B_2}$ is the algebra of
smooth functions $f(x,y):\triangle \to \mc$ with point-wise addition
and multiplication of functions. Vector space $M[a,b]$ is the space
of all smooth functions $\phi(x):[a,b]\to\mc$ for which $\phi(a)=0$
and $\forall n\in \mathbb{N}$, $\frac{d^n}{dx^n}\phi_(x)|_{x=a}=0$.
  $\End_\mc(M)$ is the algebra of all $\mc$-linear
maps from $M[a,b]$ to $M[a,b]$. The algebra $\mIm$ is the subalgebra
of $\End_\mc(M[a,b])$, generated by operators $\{ P(x) |\,P(x)\in
{\boldsymbol{B_1}}\}$, $\rd_{x}$ and $\{ \,\int^{x}_{a} f(x,z) dz
|\,f(x,y)\in \boldsymbol{B_2}\}$, acting on $\phi(x)\in M[a,b]$ as
 $P(x):\phi(x) \mapsto P(x) \phi(x)$, $ \rd_{x}:\phi(x) \mapsto \rd_{x}\phi(x)$,
  $\int^{x}_{a} f(x,z) dz:\phi(x) \mapsto \int^{x}_{a} f(x,z) \phi(z) dz$
  respectively. $\theta(x-y)$ is the Heaviside step function, $\theta(x-y)=1$ for $x\geq y$ and  $\theta(x-y)=0$ for $x<
  y$. For $P(x)\in {\boldsymbol{B_1}}$, by $P(x)\not\equiv 0$ we mean $P(x)$ is not
zero
 every where on $[a,b]$.\\\\

We begin our study of algebra $\mIm$ by noting that the operator
 $\int^{x}_{a} dz \,f(x,z)\in \mIm$, acts on $\phi(x)\in M[a,b]$ as;
\be \int^{x}_{a} dz\,f(x,z)\phi(z)= \int^{b}_{a} dz\,
f(x,z)\theta(x-z)\phi(z).\nn\ee Thus Volterra operator $\int^{x}_{a}
dz f(x,z)\in \mIm$ is equivalent to {\em Fredholm } operator
$\int^{b}_{a} dz\,f(x,z)\theta(x-z)$. From now on for $f(x,y)\in
\boldsymbol{B_2}$ we denote the operator $\int^{x}_{a} dz f(x,z)$ by
$f(x,y)\theta(x-y)$.

\begin{lem}\label{lemf4}{The set of operators \be\mathcal{G}=\{ g(x,y) \theta(x-y)\, | \,g(x,y)\in
\boldsymbol{B_2}\}\subset \mIm\nn\ee is closed under multiplication
of operators (on $M[a,b]$) such that for
$\,g_{1}(x,y),\,g_{2}(x,y)\in {\boldsymbol{B_2}}$ we have,
$\forall\, \phi(x)\in M[a,b]$, \be\int^{x}_{a}\, dt \,
g_2(x,t)(\int^{\,t}_{a} dz\, g_1(t,z)\phi (z)) \nn = \int^{x}_{a}
dz\, g_3(x,z) \phi (z),\nn\ee where \be g_{3}(x, y)=\int^{\,x}_{y}
dz\,g_2(x,z) g_1(z,y)\in {\boldsymbol {B_2}}. \nn\ee}
\end{lem}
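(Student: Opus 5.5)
The plan is to compute the composition directly and then change the order of integration. Fix $\phi\in M[a,b]$ and $x\in[a,b]$, and write
\be \int^{x}_{a} dt\, g_2(x,t)\Big(\int^{t}_{a} dz\, g_1(t,z)\phi(z)\Big)=\iint_{T_x} g_2(x,t)\,g_1(t,z)\,\phi(z)\,dz\,dt, \nn\ee
where $T_x=\{(t,z)\,|\, a\le z\le t\le x\}$. The integrand is continuous on the compact triangle $T_x$: $(t,z)\in\triangle$ and $(x,t)\in\triangle$ whenever $(t,z)\in T_x$, and $g_1,g_2,\phi$ are continuous. Fubini's theorem therefore applies. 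Describing $T_x$ as $a\le z\le x$, $z\le t\le x$, we get
\be \int^{x}_{a} dz\,\Big(\int^{x}_{z} dt\, g_2(x,t)g_1(t,z)\Big)\phi(z)=\int^{x}_{a}dz\, g_3(x,z)\phi(z). \nn\ee
This is the asserted identity. Equivalently, in the $\theta$-notation, the product of $g_2(x,y)\theta(x-y)$ and $g_1(x,y)\theta(x-y)$ is $g_3(x,y)\theta(x-y)$. This uses $\theta(x-t)\theta(t-z)=\theta(x-z)\,\mathbf{1}_{[z,x]}(t)$, up to the measure-zero set $t=z$.

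The remaining, and main, point is that $g_3\in\boldsymbol{B_2}$, i.e.\ that $g_3$ is smooth on $\triangle$ in the sense of the definition of $C^n[\triangle]$ for all $n$. The plan is to write $g_3(x,y)=G(x,x,y)-G(x,y,y)$ with
\be G(x,s,y)=\int^{s}_{c}dz\, g_2(x,z)g_1(z,y). \nn\ee
Here we want the integrand $h(x,z,y)=g_2(x,z)g_1(z,y)$ to be defined and smooth where $y\le z\le x$. I expect the following to be the obstacle: $g_1$ and $g_2$ live only on $\triangle$, so the domain of $h$ is $\{y\le z\le x\}$, which is not a product set. I would avoid this by substituting $z=y+s(x-y)$, $s\in[0,1]$:
\be g_3(x,y)=(x-y)\int^{1}_{0}ds\, g_2\big(x,\,y+s(x-y)\big)\,g_1\big(y+s(x-y),\,y\big). \nn\ee
For $(x,y)\in\triangle$ and $s\in[0,1]$, both arguments stay in $\triangle$. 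The integrand is a composition of smooth maps, so it is jointly smooth in $(x,y,s)$ on the compact set $\triangle\times[0,1]$. Differentiation under the integral sign, justified by uniform continuity of all partial derivatives on this compact set, then shows that every mixed partial derivative $\rd^{i}_{x}\rd^{j}_{y}g_3$, in any order, exists and is continuous on $\triangle$. This includes the boundary points, taken one-sidedly as in the convention for $C^n[\triangle]$.

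Finally, I note that the composite operator is again of the form $g_3(x,y)\theta(x-y)$ with $g_3\in\boldsymbol{B_2}$. Hence $\mathcal{G}$ is closed under multiplication. It is also contained in $\mIm$ by definition, and the resulting operator maps $M[a,b]$ into itself, as noted earlier for Volterra operators with smooth kernels.
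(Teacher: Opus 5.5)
Your proposal is correct and follows the paper's argument. Both proofs interchange the order of integration to reach $g_3(x,z)=\int_z^x g_2(x,t)g_1(t,z)\,dt$. The paper does this on the square $[a,b]^2$ with the bounded, piecewise continuous integrand $\phi(z)g_2(x,t)\theta(x-t)g_1(t,z)\theta(t-z)$, while you do it directly on the triangle $T_x$ with a continuous integrand. Your substitution $z=y+s(x-y)$ followed by differentiation under the integral sign also proves $g_3\in\boldsymbol{B_2}$, which the paper only asserts.
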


{\bf Proof.} {By acting operator $g_1(x,y)\theta(x-y)$ and then
$g_2(x,y)\theta(x-y)$ in row on $\phi(x)\in M[a,b]$, \bea
&&\int^{x}_{a}\, dt \, g_2(x,t)(\int^{\,t}_{a} dz\, g_1(t,z)\phi
(z))={\int^{b}_{a}} dt
\,g_2(x,t)\,\,\theta(x-t)\,\,\big({\int^{b}_{a}}\,dz\,g_1(t,z)\,\theta(t-z)\phi(z)\big)
\nn\\&&={\int^{b}_{a}} dt\, \big({\int^{b}_{a}}\, d z\,\phi(z)\,
g_2(x,t)\,\theta(x-t)
\,g_1(t,z)\,\theta(t-z)\,\big)\nn\\&&={\int^{b}_{a}} dz\,
\big({\int^{b}_{a}}\, d t\,\phi(z)\, g_2(x,t)\,\theta(x-t)
\,g_1(t,z)\,\theta(t-z)\,\big)\nn\\&&={\int^{b}_{a}} dz\,
\phi(z)\big({\int^{b}_{a}}\, d t\, g_2(x,t)\,\theta(x-t)
\,g_1(t,z)\,\theta(t-z)\,\big)={\int^{b}_{a}}
dz\,\phi(z)\,\big(\,\theta(x-z)\,{\int^{x}_{z}}\, d t\,
g_2(x,t)g_1(t,z)\,\big)\nn\\&&={\int^{x}_{a}}
dz\,(\,\,{\int^{x}_{z}}\, d t\,
g_2(x,t)g_1(t,z)\,\big)\phi(z)\,.\nn\eea} In the third line the
order of integration is changed using the facts that the function
\be h(z,t)=\phi(z)\, g_2(x,t)\,\theta(x-t)
\,g_1(t,z)\,\theta(t-z),\nn\ee defined on $\Omega=[a,b]\times[a,b]$
is bounded and piecewise continuous. \qed

\begin{con}\label{char}{Throughout the paper we denote the
composition of two operators by $*$ and the action of operators on
function space by dot ($\cdot$). Therefore for general operators
${\bf{i}}, {\bf{j}}$ we have, \be({\bf{i}}*{\bf{j}})\cdot
\phi(x)=({\bf{i}}\cdot({\bf{j}}\cdot\phi(x)))\,\,.
\nn\label{four}\ee Moreover in $\mIm$ we denote powers by underline,
e.g., $\bf{i}^{\,\underline 3}=\bf{i}*\bf{i}*\bf{i}$}, and
$\bf{i}^{-\,\underline 1}$ indicates the inverse.  \end{con}

\begin{pro}\label{hesam}{In operator algebra $\mIm$ we have the following equalities: \bea &&(g(x,y)\theta(x-y))\ast
(f(x,y)\theta(x-y))=\big({\int^{x}_{y}}\, d z\,
g(x,z)f(z,y)\big)\theta(x-y),\label{p1}\nn\\
  \label{rr4}\nn
  && P(x) * f(x,y)\,\theta(x-y)=(P(x)f(x,y))\,\theta(x-y)
  ,\\
  \label{rr5}\nn
&& f(x,y)\,\theta(x-y) * P(x) = (P(y)f(x,y))\,\theta(x-y),\\
 \label{rr2}\nn
&&  \rd_{x} * f(x,y)\,\theta(x-y)= (\rd_{x}f(x,y))\,\theta(x-y)+f(x,x),\\
   \label{rr3}\nn
&& f(x,y)\,\theta(x-y) * \rd_{x} =
-(\rd_{y}f(x,y))\,\theta(x-y)+f(x,x),\\\label{tt1}\nn &&
P(x)*Q(x)=P(x)Q(x),
  \hspace{3cm} (\rd_{x})^{\,\underline n}*(\rd_{x})^{\,\underline m}=(\rd_{x})^{\,\underline {\,n+m}}=\rd^{\,n+m}_{x}\,,
   \\
  \label{tt3}\nn
&&  \rd_{x} * P(x) = (\rd_{x} P(x))+  P(x)*\rd_{x},  \hspace{1.65cm}
P(x)*\rd_{x}  = P(x)\rd_{x}.
  \eea}
\end{pro}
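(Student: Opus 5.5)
We verify each identity by applying both sides to an arbitrary $\phi(x)\in M[a,b]$, in the sense of Convention \ref{char}. Two operators in $\mIm\subset\End_\mc(M[a,b])$ are equal exactly when they agree on every $\phi$. So each claimed equality reduces to an identity between functions of $x$, which we derive from elementary calculus.

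The first identity is exactly Lemma \ref{lemf4}, with $g_2=g$ and $g_1=f$. The multiplication rules are immediate:
\[
(P*f\theta)\cdot\phi=P(x)\int_a^x dz\, f(x,z)\phi(z),
\qquad
(f\theta*P)\cdot\phi=\int_a^x dz\, f(x,z)P(z)\phi(z),
\]
and the second is the kernel $P(y)f(x,y)$ evaluated at $y=z$. The identities $P*Q=PQ$, $P*\rd_x=P\rd_x$ and $\rd_x^{\,\underline n}*\rd_x^{\,\underline m}=\rd_x^{\,n+m}$ are tautological. The relation $\rd_x*P=(\rd_xP)+P*\rd_x$ is the Leibniz rule $\rd_x(P\phi)=P'\phi+P\phi'$.

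The two identities with content are those mixing $\rd_x$ and the Volterra operator. For $\rd_x*f\theta$ we differentiate under the integral sign with variable upper limit:
\[
\rd_x\int_a^x dz\, f(x,z)\phi(z)=f(x,x)\phi(x)+\int_a^x dz\,(\rd_xf)(x,z)\phi(z).
\]
This is justified because $f$ is smooth on $\triangle$, so $\rd_xf$ is continuous on the compact set $\triangle$. Here $f(x,x)$ denotes the multiplication operator by the function $x\mapsto f(x,x)\in\boldsymbol{B_1}$.

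For $f\theta*\rd_x$ we integrate by parts in $z$:
\[
\int_a^x dz\, f(x,z)\phi'(z)=f(x,x)\phi(x)-f(x,a)\phi(a)-\int_a^x dz\,(\rd_yf)(x,z)\phi(z).
\]
The boundary term at $z=a$ vanishes precisely because $\phi(a)=0$ for $\phi\in M[a,b]$. This is the one place where the choice of the space $M[a,b]$ matters: on $C^\infty[a,b]$ the identity would fail by the term $f(x,a)\phi(a)$.

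Finally we check that each right-hand side is again a legitimate element of $\mIm$, which is where any subtlety lies. The functions $\rd_xf$, $\rd_yf$, $P(y)f(x,y)$ and $P(x)f(x,y)$ must lie in $\boldsymbol{B_2}$, and $x\mapsto f(x,x)$ must lie in $\boldsymbol{B_1}$. Both follow from smoothness of $f$ on $\triangle$, including at the diagonal and boundary, with one-sided derivatives there. I expect no real obstacle; the only care needed is the justification of differentiation under the integral, and the vanishing boundary term using $\phi(a)=0$.
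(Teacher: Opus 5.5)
Your proposal is correct and follows the paper's own proof. You take the first identity from Lemma \ref{lemf4} and check the rest by acting on an arbitrary $\phi\in M[a,b]$; the substantive case $f(x,y)\theta(x-y)*\rd_x$ is handled exactly as in the paper, by integrating by parts and discarding the boundary term at $z=a$ because $\phi(a)=0$. Your extra justifications (differentiating under the integral with variable upper limit, and checking that the right-hand sides lie in $\mIm$) only fill in details the paper calls easy.
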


{\bf Proof.} The first equality is clear by Lemma \ref{lemf4}. Other
equalities can be verified easily, e.g., for the fifth one we have,
$\forall \phi(x)\in M[a,b]$, \bea&&(f(x,y)\,{\theta(x-y)}
*\rd_{x})\cdot\phi(x)=\int^{b}_{a} dz f(x,z)\theta(x-z)
\rd_{z}\phi(z) =\int^{x}_{a} dz f(x,z)
\rd_{z}\phi(z)\nn\\&&\hspace{1cm}=f(x,z)\phi(z)|^{x}_{a}-\int^{x}_{a}
dz (\rd_{z}f(x,z)) \phi(z)= f(x,x)\phi(x)-\int^{b}_{a} dz
(\rd_{z}f(x,z))\theta(x-z)\phi(z)\nn\\&&\hspace{1cm}=
f(x,x)\cdot\phi(x)-((\rd_{y}f(x,y))\,\theta(x-y))\cdot\phi(x).\nn
 \hspace{7cm}\text{\qed}\eea

\begin{cor}\label{iraj55} In operator algebra $\mIm$ we have, for
$n\ge1$, {\bea (\rd_{x}\,)^{\underline n}=\rd_{x}^{\,n},
\hspace{2cm} (\,\theta(x-y))^{\underline n}={(x-y)^{n-1}
\over(n-1)!}\,\,\theta(x-y),\nn\\(\rd_{x}\,)^{\underline
n}\,\,*\,\,(\,\theta(x-y))^{\underline n}=1,
\hspace{2cm}(\,\theta(x-y))^{\underline
n}\,\,*\,\,(\rd_{x}\,)^{\underline n}=1,\nn\eea \be
(\rd_{x}\,)^{-\underline n}\,\,=\,\,(\,\theta(x-y))^{\underline
n}\,\,=\,\,{(x-y)^{n-1} \over (n-1)!}\,\,\theta(x-y).\ee}
\end{cor}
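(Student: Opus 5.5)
Each identity follows by induction on $n$ from Proposition \ref{hesam}.

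The first identity, $(\rd_{x})^{\underline n}=\rd_{x}^{\,n}$, is the relation $(\rd_{x})^{\underline n}*(\rd_{x})^{\underline m}=\rd^{\,n+m}_{x}$ of Proposition \ref{hesam} iterated, or simply the definition of composition. For the second identity I induct on $n$; the case $n=1$ is trivial. Assuming $(\theta(x-y))^{\underline n}=\frac{(x-y)^{n-1}}{(n-1)!}\theta(x-y)$, the first equality of Proposition \ref{hesam} (Lemma \ref{lemf4}) with $g=1$ and $f=\frac{(x-y)^{n-1}}{(n-1)!}$ gives
\be
(\theta(x-y))^{\underline{n+1}}=\theta(x-y)*(\theta(x-y))^{\underline n}=\Big(\int_y^x dz\,\frac{(z-y)^{n-1}}{(n-1)!}\Big)\theta(x-y)=\frac{(x-y)^{n}}{n!}\,\theta(x-y).
\nn\ee

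For $(\rd_{x})^{\underline n}*(\theta(x-y))^{\underline n}=1$, the base case uses the fourth equality of Proposition \ref{hesam} with $f=1$: $\rd_x*\theta(x-y)=0+1=1$. This is just $\rd\int=1$ on $M[a,b]$. The induction step regroups by associativity of composition:
\be
(\rd_{x})^{\underline{n+1}}*(\theta(x-y))^{\underline{n+1}}=(\rd_{x})^{\underline n}*\big(\rd_{x}*\theta(x-y)\big)*(\theta(x-y))^{\underline n}=(\rd_{x})^{\underline n}*(\theta(x-y))^{\underline n}=1.
\nn\ee
The reverse identity $(\theta(x-y))^{\underline n}*(\rd_x)^{\underline n}=1$ is handled the same way. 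Its base case is the fifth equality with $f=1$: $\theta(x-y)*\rd_x=-0+1=1$. This is exactly where the vanishing of $\phi(a)$ for $\phi\in M[a,b]$ enters, through the boundary term in the integration by parts. The induction step peels off the innermost pair $\theta(x-y)*\rd_x$.

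Since $(\rd_x)^{\underline n}$ has the two-sided inverse $(\theta(x-y))^{\underline n}$, this inverse is unique. Hence $(\rd_x)^{-\underline n}=(\theta(x-y))^{\underline n}$, and the second identity supplies the explicit kernel. I expect no real obstacle. The only care needed is that both one-sided identities rely on working on $M[a,b]$ rather than on all of $C^\infty[a,b]$; this is already built into Proposition \ref{hesam}.
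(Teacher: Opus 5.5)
Your proof is correct and follows the paper's own route. The paper states this corollary without a separate proof, as a direct consequence of the multiplication rules in Proposition \ref{hesam}, and your steps are exactly the routine ones it leaves to the reader: the induction via the kernel computation $\int_y^x \frac{(z-y)^{n-1}}{(n-1)!}\,dz=\frac{(x-y)^n}{n!}$, the regrouping around $\rd_x*\theta(x-y)=1$ and $\theta(x-y)*\rd_x=1$, and uniqueness of two-sided inverses.

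One small correction to your closing remark. Only the identities $(\theta(x-y))^{\underline n}*(\rd_x)^{\underline n}=1$ depend on the vanishing of $\phi$ and its derivatives at $a$. The identities $(\rd_x)^{\underline n}*(\theta(x-y))^{\underline n}=1$ hold on all of $C^{\infty}[a,b]$, and indeed on $C[a,b]$, by the fundamental theorem of calculus.
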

Using multiplication rules in Proposition (\ref{hesam}), any ${\bf
i}\in \mIm$ can be written as: \be {\bf i}=\big(\sum^{n}_{i=1}
P_{i}(x) \,\rd_{x}^{\,i}+P_{0}(x)\big)+
 f(x,y)\,{\theta}(x-y),\label{point}\ee acting on $\phi(x)$ in
$M[a,b]$ by, \be {\bf i}\cdot \phi(x)=\sum^{n}_{i=1} P_{i}(x)
\,\rd_{x}^{\,i}\phi(x)+P_{0}(x)\phi(x)+\int^{\,b}_{a} dz\,
f(x,z)\theta(x-z)\,\phi(z). \label{acta}\nn\ee Now in the following
we prove several results by which we ultimately prove Theorem
\ref{emam} below.

\begin{pro}\label{sim1}{Let \,\,$h(x,y)\,{\theta}(x-y)\,\,\in \mIm$.
\\
1-There exits a unique element
 $R(x,y)\,{\theta}(x-y)\,\,\in \mIm$
such that \bea h(x,y)\,{\theta}(x-y)\,*
R(x,y)\,{\theta}(x-y)\,&=&R(x,y)\,{\theta}(x-y)\,*
h(x,y)\,{\theta}(x-y)\,\nn\\&=&
R(x,y)\,{\theta}(x-y)\,-h(x,y)\,{\theta}(x-y)\,.\label{lales11}\eea
2-The operator $(1-h(x,y)\theta(x-y))$ is invertible and
\be\big(1-h(x,y)\,{\theta}(x-y)\big)^{-\underline 1} =
1+R(x,y)\,{\theta}(x-y),\label{ali}\nn\ee where
$R(x,y)\,{\theta}(x-y)$ is given in {\em (\ref{lales11})}}.\end{pro}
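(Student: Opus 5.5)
The plan is to build the resolvent kernel $R$ as a Neumann series of iterated kernels, using the composition rule of Lemma \ref{lemf4} (first equality of Proposition \ref{hesam}).

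\textbf{Construction of $R$.} Define $h_1(x,y)=h(x,y)$ and, for $n\ge1$,
\[
h_{n+1}(x,y)=\int_y^x dz\, h(x,z)\,h_n(z,y).
\]
By Lemma \ref{lemf4}, $h_n\theta(x-y)=(h\theta(x-y))^{\underline n}$ and each $h_n\in\boldsymbol{B_2}$. Associativity of operator composition gives the equivalent recursion $h_{n+1}(x,y)=\int_y^x dz\, h_n(x,z)\,h(z,y)$ as well. Set
\[
R(x,y)=-\sum_{n\ge1} h_n(x,y).
\]
The sign is chosen so that (\ref{lales11}), which reads $h*R=R-h$, holds: the series gives $h*R=-\sum_{n\ge2}h_n=R+h_1$, so in fact the sign convention needs care. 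Solving $h*R=R-h$ formally gives $R=-(1-h)^{-1}h$, that is $R=-\sum_{n\ge1}h_n$, and this is consistent with $(1-h)(1+R)=1+R-h-h*R=1$. I will fix signs by this computation rather than by guessing.

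\textbf{Main obstacle: convergence in $\boldsymbol{B_2}$.} I need every mixed derivative $\rd_x^i\rd_y^j$ of the series to converge uniformly on $\triangle$. For the undifferentiated terms, the standard Volterra bound applies: with $C=\sup|h|$, induction gives $|h_n(x,y)|\le C^n (x-y)^{n-1}/(n-1)!$. For derivatives, differentiate the recursion. Each $\rd_x$ acting on $\int_y^x h(x,z)h_n(z,y)\,dz$ produces a boundary term $h(x,x)h_n(x,y)$ plus an integral involving $\rd_x h$. Each $\rd_y$ produces $-h(x,y)h_n(y,y)$ plus an integral involving $\rd_y h_n$; I will use the left-recursion variant instead, so that $\rd_y$ falls on $h$. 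By induction on $i+j$, I expect bounds of the form
\[
|\rd_x^i\rd_y^j h_n|\le C_{ij}\, n^{i+j} K^n (b-a)^{\max(n-1-i-j,0)}/(n-1-i-j)!,
\]
which are summable. This bookkeeping is the hardest step. A cleaner alternative is to show that $h_n\theta(x-y)$ applied in the Fréchet topology converges, but a kernel-level estimate is what is needed for $R\in\boldsymbol{B_2}$.

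\textbf{Verification and uniqueness.} Given uniform convergence, term-by-term composition using Lemma \ref{lemf4}, with the uniform bounds justifying the interchange of sum and integral, yields both identities in (\ref{lales11}), from the two recursions respectively. For uniqueness, suppose $R'$ also satisfies $h*R'=R'-h$. Then $D=R-R'$ satisfies $D=h*D$, i.e. $D(x,y)=\int_y^x h(x,z)D(z,y)\,dz$. Iterating gives $|D|\le \sup|D|\, C^n(x-y)^n/n!\to0$, so $D=0$.

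\textbf{Part 2.} Compute
\[
(1-h\theta)*(1+R\theta)=1+R\theta-h\theta-h\theta*R\theta=1
\]
by (\ref{lales11}), and similarly on the other side. Hence $1-h\theta$ is a two-sided unit in $\mIm$ with inverse $1+R\theta$.
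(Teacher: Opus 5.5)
Your route is the paper's. You take $R$ to be the classical resolvent kernel, built from the iterated kernels $h_n$, with convergence and smoothness coming from Volterra estimates; the paper simply cites these, together with the fact that $h\in C^k[\Delta]$ implies $R\in C^k[\Delta]$. Part 2 then follows algebraically.

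However, the kernel you commit to has the wrong sign, and with it both parts fail. Formally, $h\theta*R\theta=R\theta-h\theta$ means $(1-h\theta)*R\theta=h\theta$. Hence $R\theta=(1-h\theta)^{-\underline 1}*h\theta=\sum_{n\ge1}(h\theta)^{\underline n}$, i.e.\ $R=+\sum_{n\ge1}h_n$, not $-(1-h)^{-1}h$. Your own computation already exposes this. For $R=-\sum_{n\ge1}h_n$ one gets $h\theta*R\theta=R\theta+h\theta$, and then
$(1-h\theta)*(1+R\theta)=1+R\theta-h\theta-(R\theta+h\theta)=1-2h\theta\neq 1$.
Your ``consistency check'' $1+R-h-h*R=1$ does not test your formula, because it assumes (\ref{lales11}).

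Dropping the minus sign fixes everything. Take $R=\sum_{n\ge1}h_n$, which is exactly (\ref{beygi1}). The recursion $h_{n+1}(x,y)=\int_y^x h(x,z)h_n(z,y)\,dz$ gives $h\theta*R\theta=\sum_{n\ge2}h_n\theta=R\theta-h\theta$. The other recursion, $h_{n+1}(x,y)=\int_y^x h_n(x,z)h(z,y)\,dz$, gives $R\theta*h\theta=R\theta-h\theta$.

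With the sign corrected, the remaining steps are sound.

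Your derivative estimates are only sketched, but the mechanism is the standard one and yields summable bounds:
\begin{itemize}
\item $\partial_x$ falls on the left factor via $h_{n+1}=h*h_n$;
\item $\partial_y$ falls on the right factor via $h_{n+1}=h_n*h$;
\item each boundary term lowers $n$ by one.
\end{itemize}
This is already more than the paper writes out.

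In the uniqueness step, you pass from the operator identity $h\theta*D\theta=D\theta$ to the kernel identity $D(x,y)=\int_y^x h(x,z)D(z,y)\,dz$. That step needs injectivity of $f\mapsto f\theta$ (Lemma \ref{jafar}). Alternatively, $(1+R\theta)*(1-h\theta)=1=(1-h\theta)*(1+R'\theta)$ gives $R'\theta=R\theta$ directly, with no estimate at all.
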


{\bf Proof.} {Equation (\ref{lales11}) by using Proposition
\ref{hesam} can be rewritten as \be \Big(\int^{x}_{y} dz \,h(x,z)
R(z,y)\Big)\,{\theta}(x-y)=\Big(\int^{x}_{y} dz R(x,z) h(z,y)
\Big)\,{\theta}(x-y)\,=R(x,y)
\,{\theta}(x-y)\,-h(x,y)\,{\theta}(x-y)\,.\nn\ee Therefore the first
part of the Proposition can be proved if we show for any $h(x,y)\in
\boldsymbol{B_2}$, there exists a unique function
$R(x,y)\in\boldsymbol{B_2}$ for which, \be \int^{x}_{y} dz \,h(x,z)
R(z,y)=\int^{x}_{y} dz R(x,z) h(z,y)=R(x,y)-h(x,y).\label{ferri}\ee
The equation (\ref{ferri}) is well known in the context of Volterra
integral equation of second kind (e.g. see \cite{it}, section 1.2).
Specifically for every $h(x,y)\in C[\Delta]$ there exists $R(x,y)\in
C[\Delta]$, called {\em resolvent kernel}, satisfying (\ref{ferri})
and it is given by a uniformly convergent series, \be
R(x,y)=\lim_{n\to\infty}\sum^{n}_{i=1}h_{i}(x,y).\label{beygi1}\ee
where $h_{\,1}(x,y)=h(x,y)$ and \be h_{\,i}(x,y)=\int^{x}_{y}dz\,
h(x,z) \,h_{\,i-1}(z,y), \hspace{0.75cm}i\ge2\nn\ee Moreover if
$h(x,y)\in C^{k}[\Delta]$ then it follows that $R(x,y)\in
C^{k}[\Delta]$. Therefore we can conclude for every $h(x,y)\in
\boldsymbol{B_2}$ there exists $R(x,y)\in \boldsymbol{B_2}$
satisfying (\ref{ferri}). Thus the first part
is proved.\\
Part 2 follows from part 1. \qed\\

\begin{Rema}\label{zari}{\em {Recall that a sequence $\{T_{n}\}$ in
space of continuous linear maps on $M[a,b]$ converges to a
continuous operator $T$ if for every $k\in{\mathbb Z}^{+}$,
$\|T_{n}(\phi)-T(\phi)\|_{k}\overset{\mathrm{uniformly}}{\xrightarrow{\hspace*{0.75cm}}}0$}
for every $\phi\in M[a,b]$. Furthermore for any other continuous
operator $T{\,'}$ the sequences $\{T{\,'}*T_{n}\}$ and
$\{T_{n}*T{\,'}\}$ converge to $T{\,'}*T$ and $T*T{\,'}$
respectively. Now assume $\{R_{n}(x,y)\}$ is a sequence in
$\boldsymbol{B_2}$ and $R_n\to R$ in $C^{\infty}[\Delta]$ (all
partial derivatives converge uniformly on $\Delta$). Then the
sequence of operators $\{R_{n}(x,y)\theta(x-y)\}$ converges to the
operator $R(x,y)\theta(x-y)$.}\\
\end{Rema}

\begin{lem}\label{iraj45}{Suppose $P(x)$ and $P_{i}(x)\in \,\boldsymbol{B_{\,1}}$ for $i=0,1,\cdots,n$ and $f(x,y)\in
\boldsymbol{B_2}$. Then the following three types of operators
$\bf{i}$, $\bf{j}$, $\bf{k}$ $\in \mIm$ are invertible. \bea
{\bf{i}}&=& P(x),\hspace{0.2cm} where \hspace{0.2cm}  \forall
x\in[a,b],\,\, P(x)\ne 0\nn\,;\\
{\bf{j}}&=&P(x)-f(x,y)\theta(x-y),\hspace{0.2cm} where
\hspace{0.2cm} \forall x\in[a,b],\,\, P(x)\ne 0 \nn\,;
\\{\bf{k}}&=&\sum^{n}_{i=1} P_{i}(x) \,\rd_{x}^{\,i}+P_{0}(x)+
f(x,y)\theta(x-y),\hspace{0.2cm} where \hspace{0.2cm} \forall
x\in[a,b],\,\, P_{n}(x)\ne 0\,. \nn\eea}
\end{lem}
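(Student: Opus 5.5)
The plan is to reduce all three cases to Proposition~\ref{sim1}, using Corollary~\ref{iraj55} to trade $\rd_{x}^{\,n}$ for its inverse $(\theta(x-y))^{\underline n}$. The cases are handled in increasing order of difficulty, each one built on the previous.

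\emph{Case $\bf i$.} Since $P$ never vanishes on $[a,b]$, the function $1/P(x)$ is again smooth, so $1/P\in\boldsymbol{B_1}$. By the rule $P(x)*Q(x)=P(x)Q(x)$ of Proposition~\ref{hesam}, $P*(1/P)=(1/P)*P=1$. Hence ${\bf i}^{-\underline 1}=1/P(x)$.

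\emph{Case $\bf j$.} Factor ${\bf j}=P(x)*\big(1-h(x,y)\theta(x-y)\big)$ with $h(x,y)=f(x,y)/P(x)$. This factorization is valid by the rule $P(x)*f(x,y)\theta(x-y)=(P(x)f(x,y))\theta(x-y)$, and $h\in\boldsymbol{B_2}$. Both factors are invertible: the first by case $\bf i$, the second by Proposition~\ref{sim1} part 2. Therefore
\be
{\bf j}^{-\underline 1}=\big(1+R(x,y)\theta(x-y)\big)*\frac{1}{P(x)},
\nn\ee
where $R$ is the resolvent kernel of $h$. By the rule $f(x,y)\theta(x-y)*P(x)=(P(y)f(x,y))\theta(x-y)$, this can be rewritten explicitly as $1/P(x)+\big(R(x,y)/P(y)\big)\theta(x-y)$.

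\emph{Case $\bf k$.} Right-multiply by $(\theta(x-y))^{\underline n}=\frac{(x-y)^{n-1}}{(n-1)!}\theta(x-y)$ and compute ${\bf k}*(\theta(x-y))^{\underline n}$ using Corollary~\ref{iraj55} and Proposition~\ref{hesam}. The terms behave as follows.
\begin{itemize}
\item The top term gives $P_n(x)\,\rd_x^{\,n}*(\theta(x-y))^{\underline n}=P_n(x)$.
\item For $i<n$, we have $\rd_x^{\,i}*(\theta(x-y))^{\underline n}=(\theta(x-y))^{\underline{n-i}}$, which equals $\frac{(x-y)^{n-i-1}}{(n-i-1)!}\theta(x-y)$. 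Multiplying by $P_i(x)$ keeps it in $\mathcal G$.
\item The integral term $f\theta*(\theta)^{\underline n}$ lies in $\mathcal G$ by Lemma~\ref{lemf4}.
\end{itemize}
Altogether ${\bf k}*(\theta(x-y))^{\underline n}=P_n(x)-g(x,y)\theta(x-y)$ for some $g\in\boldsymbol{B_2}$, and this operator is of type $\bf j$, hence invertible. It follows that ${\bf k}$ has the right inverse
\be
{\bf k}^{-\underline 1}=(\theta(x-y))^{\underline n}*\big(P_n(x)-g(x,y)\theta(x-y)\big)^{-\underline 1}.
\nn\ee
A right inverse alone does not prove invertibility, so a left inverse is needed as well. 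The step I expect to be the main obstacle is showing that this right inverse is also a left inverse. My plan is to repeat the argument on the other side: compute $(\theta(x-y))^{\underline n}*{\bf k}$. This requires commuting $\theta^{\underline n}$ past $P_i(x)\rd_x^{\,i}$ using the rules $f\theta*P=(P(y)f)\theta$ and $f\theta*\rd_x=-(\rd_yf)\theta+f(x,x)$. Iterating these rules $i$ times produces boundary terms $f(x,x)$. Because the kernel $(x-y)^{n-1}$ vanishes to order $n-1$ on the diagonal, these boundary terms should vanish except at the top order, where they give exactly $P_n(x)$. If this works out, $\theta^{\underline n}*{\bf k}=P_n(x)-\tilde g(x,y)\theta(x-y)$, which is again invertible by case $\bf j$, so ${\bf k}$ has a left inverse too. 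A two-sided inverse then exists because left and right inverses coincide in the associative algebra $\End_\mc(M[a,b])$. As a fallback, if the boundary-term bookkeeping gets messy, I would instead prove injectivity of ${\bf k}$ directly on $M[a,b]$. The equation ${\bf k}\cdot\phi=0$ can be rewritten, via $\phi=\theta^{\underline n}\cdot\rd_x^{\,n}\phi$ (valid on $M[a,b]$), as $\big(P_n-g\theta\big)\cdot(\rd_x^{\,n}\phi)=0$. This forces $\rd_x^{\,n}\phi=0$, and hence $\phi=0$. Combined with the right inverse, this gives bijectivity.
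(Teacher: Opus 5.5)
Your proposal is correct and follows essentially the paper's route: the paper also factors ${\bf k}=P_n(x)*\big(1-g(x,y)\theta(x-y)\big)*(\rd_x)^{\underline n}$ and so reduces to case ${\bf j}$ and Proposition~\ref{sim1}. The step you flag as the main obstacle is in fact immediate: Corollary~\ref{iraj55} gives $(\theta(x-y))^{\underline n}*(\rd_x)^{\underline n}=1$, hence ${\bf k}=\big({\bf k}*(\theta(x-y))^{\underline n}\big)*(\rd_x)^{\underline n}$ is a product of two two-sided invertible operators, so neither your boundary-term computation of $(\theta(x-y))^{\underline n}*{\bf k}$ (which does work out as you predict) nor the injectivity fallback is needed.
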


{\bf Proof}. {Clearly  ${(\,\bf{i}\,)}^{-\underline
1}=(P(x))^{-\underline 1}={1\over P(x)}$.\\
Now $ {\bf{j}}=P(x)-f(x,y)\theta(x-y)=P(x)*\Big(1-
(P(x))^{-\underline 1}*f(x,y)\theta(x-y)\Big)=P(x)*\Big(1-
{f(x,y)\over P(x)}\theta(x-y)\Big).$ Both $P(x)$ and $\Big(1-
{f(x,y)\over P(x)}\theta(x-y)\Big)$ are invertible, therefore
${\bf{j}}$ which is the product of two invertible operators is
itself invertible and we have, \be {(\bf{j})}^{-\underline 1}=
\Big(1- {f(x,y)\over P(x)}\theta(x-y)\Big)^{-\underline
1}*P(x)^{-\underline 1}.\nn\ee For operator ${\bf{k}}$, using
Proposition (\ref{iraj55}) we can write,\bea{\bf{k}}&=&\Big(
P_{n}(x)+\sum^{n-1}_{i=0} P_{i}(x)* \,(\rd_{x})^{-\underline
{(n-i)}}\,\,+ f(x,y)\theta(x-y)*(\rd_{x})^{-\underline
n}\Big)*(\rd_{x})^{\underline n}\nn \\ &=& P_{n}(x)*\Big(1
+\sum^{n-1}_{i=0} {P_{i}(x)\over P_{n}(x)} \,{(x-y)^{n-i-1}\over
{(n-i-1)!}}\theta(x-y)\,\,\nn\\&&+ {f(x,y)\over
P_{n}(x)}\theta(x-y)*{(x-y)^{n-1}\over
{(n-1)!}}\theta(x-y)\Big)*(\rd_{x})^{\underline n}\nn \,\,\,. \eea
Therefore ${\bf{k}}=
P_{n}(x)*\big(1-g(x,y)\theta(x-y)\big)*(\rd_{x})^{\underline n}$
where, \be g(x,y)=-\Big(\sum^{n-1}_{i=0} {P_{i}(x)\over P_{n}(x)}
\,{(x-y)^{n-i-1}\over {(n-i-1)!}}+ \int^{x}_{y} dz\,{f(x,z)\over
P_{n}(x)}{(z-y)^{n-1}\over {(n-1)!}}\Big).\nn\ee Since
$\bf{k}$ is the product of three invertible operators it is itself
invertible and we have \be \hspace{3cm}{(\bf{k})}^{-\underline
1}=(\rd_{x})^{-\underline n}*
\big(1-g(x,y)\theta(x-y)\big)^{-\underline 1}*P_{n}(x)^{-\underline
1}\,\,\,.\hspace{4cm \qed} \nn\ee The following comes as a
consequence.
\begin{cor}\label{iraj46}{\,If $P_{i}(x)\in \boldsymbol{B_1}$\, ($i=0,1,\cdots,n$), $f(x,y)\in
\boldsymbol{B_{2}}$ and $P_{n}(x)\neq 0$ for $x\in[a,b]$, then there
does not exist $0 \not\equiv  \phi(x)\in M[a,b]$ with, \be
\sum^{n}_{i=1} P_{i}(x)
\,\rd_{x}^{\,i}\phi(x)+P_{0}(x)\phi(x)+\int^{\,x}_{a} dz
f(x,z)\,\phi(z)=0.\label{chera}\nn\ee}
\end{cor}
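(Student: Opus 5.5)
The plan is to read the corollary as injectivity of the operator ${\bf k}$ from Lemma \ref{iraj45}. Invertibility in $\mIm$ gives a two-sided inverse inside $\End_\mc(M[a,b])$, and injectivity follows at once from that.

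\textbf{Step 1.} Set
\[
{\bf k}=\sum^{n}_{i=1} P_{i}(x)\,\rd_{x}^{\,i}+P_{0}(x)+f(x,y)\theta(x-y).
\]
By the action formula for elements written in the form (\ref{point}), the left-hand side of the displayed equation in the corollary is exactly ${\bf k}\cdot\phi(x)$. Here we use $\int_a^x dz\, f(x,z)\phi(z)=\int_a^b dz\, f(x,z)\theta(x-z)\phi(z)$. So the hypothesis of the corollary says precisely that ${\bf k}\cdot\phi=0$ for some $\phi\in M[a,b]$.

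\textbf{Step 2.} Since $P_n(x)\neq0$ on $[a,b]$, Lemma \ref{iraj45} shows that ${\bf k}$ is invertible in $\mIm$. Concretely,
\[
({\bf k})^{-\underline 1}=(\rd_{x})^{-\underline n}*\big(1-g(x,y)\theta(x-y)\big)^{-\underline 1}*P_{n}(x)^{-\underline 1}.
\]
Each factor is a genuine two-sided inverse on $M[a,b]$:
\begin{itemize}
\item $P_n^{-1}$ is multiplication by $1/P_n$, which lies in $\boldsymbol{B_1}$ because $P_n$ does not vanish.
\item $(1-g\theta)^{-\underline 1}=1+R\,\theta(x-y)$ by Proposition \ref{sim1}.
\item $(\rd_x)^{-\underline n}=\frac{(x-y)^{n-1}}{(n-1)!}\theta(x-y)$ by Corollary \ref{iraj55}.
\end{itemize}
Hence $({\bf k})^{-\underline 1}*{\bf k}=1$ as operators on $M[a,b]$.

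\textbf{Step 3.} Apply this inverse to get
\[
\phi=({\bf k}^{-\underline 1}*{\bf k})\cdot\phi={\bf k}^{-\underline 1}\cdot({\bf k}\cdot\phi)={\bf k}^{-\underline 1}\cdot 0=0,
\]
using linearity. This contradicts $\phi\not\equiv0$, which proves the corollary.

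The only point needing care is Step 2, namely that the inverse is two-sided on $M[a,b]$ and not just formal. For $(\rd_x)^{\underline n}$ this rests on $\int\rd=1$ holding on $M[a,b]$, which is exactly where the vanishing of all derivatives at $a$ is used (Corollary \ref{iraj55}). The left inverse is the direction needed for injectivity, and it is supplied by those earlier results.
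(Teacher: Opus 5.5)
Your proposal is correct and follows the paper's own proof. The paper likewise identifies the left-hand side with ${\bf k}\cdot\phi$ and uses Lemma \ref{iraj45} to conclude that ${\bf k}$ is an automorphism of $M[a,b]$, whence $\ker{\bf k}=\{0\}$; your check that each factor of ${\bf k}^{-\underline 1}$ is a genuine two-sided inverse on $M[a,b]$ (using $\int\rd=1$ there) just spells out what the paper leaves implicit.
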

{\bf Proof.} Since by Lemma \ref{iraj45} the operator $\bf{k}$ is
invertible, it is an automorphism on vector space $M[a,b]$.
Therefore $\ker{\bf{k}}=\{0\}$ which implies the result. \qed
\begin{lem}\label{jafar}{If $P_{0}(x)\in
\boldsymbol{B_{\,1}}$, $f(x,y)\in \boldsymbol{B_{\,2}}$ and \,\be
P_{0}(x) \phi(x)-\int^{\,x}_{a} dz f(x,z)\,\phi(z)=0, \hspace{0.5cm}
\forall\,\phi(x)\in M[a,b],\hspace{1.5cm}
(\,\star\,)\label{faramarz}\nn\ee then, $P_{0}(x)=0$ for $x\in[a,b]$
and $f(x,y)=0$ for $(x,y)\in \triangle$.}
\end{lem}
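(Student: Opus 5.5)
We argue pointwise by localizing the test functions. The proof has two stages: first $P_0\equiv 0$, then $f\equiv 0$.

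\textbf{Step 1: $P_0\equiv 0$.} Fix $x_0\in(a,b]$ and $\varepsilon>0$ small. Choose $\phi\in M[a,b]$ with $0\le\phi\le 1$, $\phi(x_0)=1$, and $\operatorname{supp}\phi\subset[x_0-\varepsilon,x_0]\cap(a,b]$. Such a $\phi$ exists: take a standard bump function built from $\mathrm{e}^{1/(a-x)}$-type factors. It vanishes identically near $a$, so all its derivatives vanish at $a$. Evaluating $(\star)$ at $x=x_0$ gives
\be
|P_0(x_0)|=\Big|\int^{x_0}_{x_0-\varepsilon} dz\, f(x_0,z)\phi(z)\Big|\le \varepsilon \sup_{\triangle}|f|.\nn
\ee
Letting $\varepsilon\to0$ yields $P_0(x_0)=0$ for all $x_0\in(a,b]$, and by continuity also at $x_0=a$.

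\textbf{Step 2: $f\equiv 0$.} Now $(\star)$ reduces to $\int^{x}_{a} dz\, f(x,z)\phi(z)=0$ for every $x\in[a,b]$ and every $\phi\in M[a,b]$. Fix $x$ and an interior point $y_0\in(a,x)$. Take a nonnegative bump $\phi_\varepsilon\in M[a,b]$ supported in $[y_0-\varepsilon,y_0+\varepsilon]\subset(a,x)$ with $\int\phi_\varepsilon=1$. Then
\be
0=\int^{x}_{a}dz\, f(x,z)\phi_\varepsilon(z)\to f(x,y_0)\quad (\varepsilon\to 0),\nn
\ee
by continuity of $f$. Hence $f(x,y)=0$ whenever $a<y<x$. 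Since $f$ is continuous on $\triangle$ and this open set is dense in $\triangle$, we get $f\equiv0$ on all of $\triangle$, including the diagonal and the edge $y=a$.

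\textbf{Alternative route for Step 2.} One can avoid bumps here by using the algebra already developed. Right-multiplying the zero operator $f(x,y)\theta(x-y)$ by $\rd_x$ and applying Proposition~\ref{hesam} gives
\be
0=-(\rd_y f)\theta(x-y)+f(x,x).\nn
\ee
Step 1 then forces $f(x,x)=0$. Iterating, every $\rd_y^k f$ vanishes on the diagonal. This alone does not give $f\equiv 0$ on $\triangle$, so the bump-function argument is the one to use.

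The main obstacle is only a technical one: producing compactly supported bumps inside $(a,b]$ that belong to $M[a,b]$. This is routine, since any smooth function vanishing on a neighborhood of $a$ lies in $M[a,b]$. The limiting arguments themselves are elementary uniform-continuity estimates.
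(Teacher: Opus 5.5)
Your argument is correct up to one small slip. In Step 1, no continuous $\phi$ can satisfy $\phi(x_0)=1$ with support in $[x_0-\varepsilon,x_0]$ when $x_0<b$, since continuity from the right would force $\phi(x_0)=0$. Take the support in $[x_0-\varepsilon,x_0+\varepsilon]\cap(a,b]$ instead. Only the part of the support to the left of $x_0$ enters $\int_a^{x_0}$, so your estimate $|P_0(x_0)|\le\varepsilon\sup_{\triangle}|f|$ is unchanged.

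Your route differs from the paper's, which handles $f$ first and uses no limiting process:
\begin{itemize}
\item[(i)] It takes the flat bump $\psi_{c,d}$, which is positive on $(c,d)$ and zero elsewhere.
\item[(ii)] It uses the self-weighted test function $\phi=\overline{f(d,\cdot)}\,\psi_{c,d}$ and evaluates $(\star)$ at $x=d$.
\item[(iii)] Since $\phi(d)=0$, the $P_0$ term drops out, leaving $\int_c^d|f(d,z)|^2\psi_{c,d}(z)\,dz=0$. Positivity then forces $f(d,\cdot)=0$ on $[c,d]$.
\item[(iv)] Finally $P_0\equiv0$ follows trivially from $P_0\phi=0$ for all $\phi$.
\end{itemize}

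You instead remove $P_0$ first by a smallness estimate, then recover $f$ pointwise with an approximate identity. That is essentially the fundamental lemma of the calculus of variations. Both arguments are elementary. The paper's choice of test function gets the conclusion in one stroke without any $\varepsilon$-arguments. Yours is the more standard distributional argument and would carry over unchanged to merely continuous kernels.

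Your Step 1 is in fact unnecessary. The bumps $\phi_\varepsilon$ in Step 2 are supported in $(a,x)$, so $\phi_\varepsilon(x)=0$ and the term $P_0(x)\phi_\varepsilon(x)$ vanishes regardless of $P_0$. This is exactly the device the paper exploits. So you can prove $f\equiv0$ first and then get $P_0\equiv0$ for free, as the paper does.
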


{\bf Proof.} Let us first show the following function is in function
space $M[a,b]$. Assuming $a\le c< d \le b$,
\be\psi_{\,c,\,d}(x)=\begin{cases}\exp({{-1\over{1-({c+d-2x\over{c-d}}
)^2}}})&\text{if}\,\, c< x <d,\\0 &\text{if} \,\,\,a\leq x \leq c,
\,\,\,\,\text{or}\,\,\,\, d\leq x \leq b.
\end{cases}\label{hossein}\ee One finds the $n^{th}$ derivative has the form
$\psi^{(n)}_{\,c,\,d}(x)=p_{\,n}(\,x,\,{1\over{1-({c+d-2x\over{c-d}}
)^2}})\psi_{\,c,\,d}(x)$ for $x\in(c,d)$ (where $p_{n}(s,t)$ is some
polynomial of $s$ and $t$). We also have $\psi^{(n)}_{\,c,\,d}(x)=0$
for $x\in[a,c]$ (if $a<c$) and $\psi^{(n)}_{\,c,\,d}(x)=0$ for
$x\in[d,b]$ (if $d<b$), thus one concludes $\psi^{(n)}_{\,c,\,d}(x)$
is continuous for all points in $[a,b]$\textbackslash$\{c,d\}$. But
also the function $\psi^{(n)}_{\,c,\,d}(x)$ is continuous at $x=c$
and $x=d$ as one finds, \bea \lim_{x\to
c^+}\psi^{(n)}_{\,c,\,d}(x)=\lim_{x\to c^+}
p_{n}(\,x,\,{1\over{1-({c+d-2x\over{c-d}}
)^2}})\psi_{\,c,\,d}(x)=0,\nn\\\lim_{x\to
d^-}\psi^{(n)}_{\,c,\,d}(x)=\lim_{x\to d^-}
p_{n}(\,x,\,{1\over{1-({c+d-2x\over{c-d}} )^2}})\psi_{\,c,\,d}(x)=0.
\nn\eea So clearly the function $\psi_{\,c,\,d}(x)$ is infinitely
differentiable for $x\in[a,b]$ and also
$\psi_{\,c,\,d}(a)=\psi_{\,c,\,d}(b)=0$, ${d^{n}\over
dx^{n}}\psi_{\,c,\,d}(a)={d^{n}\over dx^{n}}\psi_{\,c,\,d}(b)=0$ for
all $n\ge1$, thus $\psi_{\,c,\,d}(x)\in M[a,b]$. We also note
$\psi_{\,c,\,d}(x)>0$ for $x\in(c,d)$. \\ Now we set, \be
\phi(x)=\overline{f(d,x)}\,\,\psi_{c,d}(x),\nn\ee where
$\overline{f(d,x)}$ is complex conjugate of $f(d,x)$. By putting
$\phi(x)$ into $(\star)$ we have; \be \int^{\,x}_{c} dz
f(x,z)\,\overline{f(d,z)}\,\,\,\psi_{\,c,\,d}(z) =P_{0}(x)
\,\overline{f(d,x)}\,\,\,\psi_{\,c,\,d}(x),\nn\ee where in the left
side $a$ is replaced by $c$ because $\psi_{\,c,\,d}(x)=0$ for $x\le
c$. In above relation, for $x=d$ we have,

\be \int^{\,d}_{c} dz |f(d,z)|^{\,2}\,\psi_{\,c,\,d}(z) =P_{0}(d)
\,\,\,\overline{f(d,d)}\,\,\,\psi_{\,c,\,d}(d)=0,\label{ferferi}\nn\ee
because $\psi_{\,c,\,d}(d)=0$. The left hand side of above relation
is a constant real positive number which is a contradiction unless
$f(d,z)=0$ for $c\le z \le d$. We can repeat the procedure for any
interval $[c,d]\subseteq[a,b]$ and conclude $f(x,y)=0$ for $b \geq
x\geq y\geq a$. Now in $(\star)$ we are left with
$P_{0}(x)\phi(x)=0$, $\forall\, \phi(x)\in M[a,b]$.
Then trivially $P_{0}(x)\equiv 0$.\qed \\

\begin{thm}\label{emam}{If $P_{i}(x)\in \,\boldsymbol{B_{\,1}}$ for $i=0,1,\cdots,n$\, and $f(x,y)\in
\boldsymbol{B_2}$ with\, \be\sum^{n}_{i=1} P_{i}(x)
\,\rd_{x}^{\,i}\phi(x)+P_{0}(x)\phi(x)+\int^{\,x}_{a} dz
f(x,z)\,\phi(z)=0,\hspace{0.5cm}\forall\, \phi(x)\in
M[a,b],\label{baghi}\nn\hspace{1.5cm} (\,\dagger\,)\ee
then\,$P_{i}(x)\equiv 0$ for $i=0,1,\cdots,n$ and $f(x,y)=0$ for
$(x,y)\in \triangle$.}
\end{thm}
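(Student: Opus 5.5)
We will reduce the statement to Lemma \ref{jafar} by first showing that all the coefficients $P_i(x)$ with $i\ge1$ vanish identically. Once that is done, $(\dagger)$ becomes $P_0(x)\phi(x)+\int^{x}_{a}dz\,f(x,z)\phi(z)=0$ for all $\phi\in M[a,b]$. This is $(\star)$ of Lemma \ref{jafar} with $f$ replaced by $-f$, so the lemma gives $P_0\equiv0$ and $f=0$ on $\triangle$.

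To kill the differential part, we argue by contradiction. Suppose some $P_i\not\equiv0$ with $i\ge1$, and let $m\ge1$ be the largest such index. Then $P_{m+1},\dots,P_n$ all vanish identically.

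Fix any $a\le c<d\le b$ and take the bump function $\psi_{c,d}\in M[a,b]$ of (\ref{hossein}). For a real parameter $\lambda$, consider the test function
\[
\phi_\lambda(x)=e^{i\lambda x}\,\psi_{c,d}(x).
\]
This lies in $M[a,b]$, because $M[a,b]$ is closed under multiplication by functions in $\boldsymbol{B_1}$. By Leibniz,
\[
\rd_x^{\,k}\phi_\lambda(x)=e^{i\lambda x}\sum_{j=0}^{k}\binom{k}{j}(i\lambda)^{j}\,\psi^{(k-j)}_{c,d}(x),
\]
so each $\rd_x^{\,k}\phi_\lambda$ equals $e^{i\lambda x}\big((i\lambda)^k\psi_{c,d}(x)+O(\lambda^{k-1})\big)$, with the $O$ uniform in $x$. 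The integral term satisfies the crude bound
\[
\Big|\int^{x}_{a}dz\,f(x,z)\phi_\lambda(z)\Big|\le (b-a)\sup_{\triangle}|f|\,\sup|\psi_{c,d}|,
\]
which is independent of $\lambda$.

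Now substitute $\phi_\lambda$ into $(\dagger)$ at a fixed point $x\in(c,d)$. Multiply by $e^{-i\lambda x}(i\lambda)^{-m}$ and let $\lambda\to\infty$. Every term tends to $0$ except $P_m(x)\psi_{c,d}(x)$, so $P_m(x)\psi_{c,d}(x)=0$. Since $\psi_{c,d}>0$ on $(c,d)$, this gives $P_m(x)=0$ for all $x\in(c,d)$. Because $c,d$ are arbitrary, and by continuity at the endpoints, $P_m\equiv0$ on $[a,b]$. This contradicts the choice of $m$.

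The only delicate point is the bookkeeping in the asymptotic step. We must check that the Leibniz expansion really yields the leading power $\lambda^m$ only through $P_m$, and that the integral term is bounded uniformly in $\lambda$. Both follow from the smoothness of $\psi_{c,d}$, $P_i$ and $f$ on compact sets. An alternative, closer to the paper's own toolkit, would be to use Corollary \ref{iraj46} on a subinterval where $P_m\ne0$. However, that corollary requires the leading coefficient to be nonvanishing on all of $[a,b]$. The oscillatory test functions avoid this issue entirely, so we prefer them.
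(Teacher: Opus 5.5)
Your proof is correct, but it disposes of the differential part by a different route from the paper. The paper picks an interval $[r_1,r_2]$ on which the top coefficient $P_n$ has no zeros. It notes that $\psi_{r_1,r_2}$, restricted to $[r_1,r_2]$, is a nonzero element of $M[r_1,r_2]$ killed by the restricted operator (the integral may start at $r_1$). It then gets a contradiction from Corollary \ref{iraj46} applied to the analogous algebra on $M[r_1,r_2]$, repeats this for the next highest nonvanishing index, and, like you, finishes with Lemma \ref{jafar}.

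So your objection to the alternative route is not quite right. The nonvanishing hypothesis of Corollary \ref{iraj46} is met simply by passing to the subinterval, and that is exactly what the paper does.

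What your oscillatory argument buys is independence from the invertibility machinery. The paper's route rests on Lemma \ref{iraj45}, hence on smooth resolvent kernels (Proposition \ref{sim1}), and on rebuilding the whole construction on $M[r_1,r_2]$. Yours needs only the Leibniz rule, the bound $|e^{i\lambda z}|=1$ for real $\lambda$, and positivity of $\psi_{c,d}$. It is essentially a principal-symbol argument and kills the top index in a single step; you could even take $c=a$, $d=b$ and conclude by continuity. The paper's route, in turn, stays inside the algebraic framework it is building and reuses the injectivity of units.

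One cosmetic point: you use $i$ both as the summation index and as the imaginary unit, so rename one of them.
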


{\bf Proof.} Assume $P_{n}(x)\not\equiv 0$. By continuity there exists
$[r_1,r_2]\subseteq[a,b]$ such that \be
P_{n}(x)\ne0\hspace{0.5cm} \text{for} \hspace{0.5cm}x\in
[r_1,r_2].\nn\ee
For interval $[r_1,r_2]$, consider the function $\psi_{r_1,r_2}(x)$
defined as in (\ref{hossein}), where $c$ and $d$ are replaced with
$r_1$ and $r_2$. By assumption from $(\dagger)$ we have for
$x\in[r_1,r_2]$: \be\sum^{n}_{i=1} P_{i}(x)
\,\rd_{x}^{\,i}\psi_{r_1,r_2}(x)+P_{0}(x)\psi_{r_1,r_2}(x)+\int^{\,x}_{r_{1}}
dz, f(x,z)\,\psi_{r_1,r_2}(z)=0,\label{daj}\ee where in the integral
above we have replaced the real number\, $a$\, with real number
\,$r_{1}$ \,since $\psi_{r_1,r_2}(x)=0$ for $a\leq x\leq r_{1}$. But
(\ref{daj}) is in contradiction with Corollary \ref{iraj46} applied
to algebra $\widehat{I}_{M{\,'}}$ (defined in complete analogy with
$\mIm$ but as an algebra of operators acting on function space
$M[r_{1}, r_{2}]$). Therefore $P_{n}(x)\equiv0$. Repeating the same argument for the
highest index $i$ with $P_i\not\equiv0$ gives $P_{i}(x)\equiv0$ for
$i=1,2,\cdots,n$. The problem is now reduced to the case of Lemma
\ref{jafar} by which we conclude $f(x,y)= 0$ for $(x,y)\in\triangle$
and $P_{0}(x)\equiv 0$ .\qed\\

\begin{cor}\label{kor}{The expression (\ref{point}) is unique in the sense that all $P_{i}(x)$ and $f(x,y)$
are uniquely determined by ${\bf i}\in \mIm$}
\end{cor}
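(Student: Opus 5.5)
The plan is to reduce uniqueness to injectivity of the representation, using Theorem \ref{emam}. Suppose ${\bf i}\in\mIm$ admits two expressions of the form (\ref{point}):
\be {\bf i}=\sum^{n}_{i=1} P_{i}(x)\,\rd_{x}^{\,i}+P_{0}(x)+f(x,y)\theta(x-y)=\sum^{m}_{i=1} Q_{i}(x)\,\rd_{x}^{\,i}+Q_{0}(x)+g(x,y)\theta(x-y).\nn\ee
First I will pad the shorter sum with zero coefficients so that both run up to $N=\max(n,m)$. This is harmless, since $0\in\boldsymbol{B_1}$.

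Next I subtract the two expressions. Because operators in $\mIm$ act linearly on $M[a,b]$, the difference is the zero operator. Acting on an arbitrary $\phi(x)\in M[a,b]$ then gives
\be \sum^{N}_{i=1}(P_{i}-Q_{i})(x)\,\rd_{x}^{\,i}\phi(x)+(P_0-Q_0)(x)\phi(x)+\int^{x}_{a}dz\,(f-g)(x,z)\phi(z)=0 .\nn\ee
This holds for every $\phi$. The coefficients $P_i-Q_i$ lie in $\boldsymbol{B_1}$ and $f-g$ lies in $\boldsymbol{B_2}$, because both are algebras of smooth functions.

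Finally I apply Theorem \ref{emam} to this identity. It gives $P_i\equiv Q_i$ for all $i$ and $f=g$ on $\triangle$, which is exactly the claimed uniqueness.

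There is no real obstacle here. The only point needing care is the bookkeeping when the two orders $n$ and $m$ differ, and the zero-padding handles it.
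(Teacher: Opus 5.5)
Your proposal is correct and is the same argument the paper intends. The paper states the corollary right after Theorem \ref{emam} as an immediate consequence of it: subtract two representations padded to a common order, and apply that theorem to the resulting zero operator.
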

 \section {Algebra $\mIs$}
 \label{mary}
In order to distinguish the different parts of an operator in
$\mIm$, we will find a particular representation $\mIs$ as follows.
It will then be more convenient to work in $\mIs$.

Let $\mathcal{D}$ be the free left-$\boldsymbol{B_1}$ module on the
set $\{1,\rd_{x},\rd^{2}_{x}\cdots\}$ inside $\mIm$. By Lemma
\ref{jafar} clearly  $\mathcal{G}$ is free as a
left-$\boldsymbol{B_2}$ module. Set
$\mIs:=\mathcal{D}\oplus\mathcal{G}$ (and note that according to
Lemma \ref{lemf4} and Proposition \ref{hesam}, $\mathcal{D}$ and
$\mathcal{G}$ are closed under composition). Clearly there is a
$\mc$-linear
isomorphism $\xi:\mIm\to\mathcal{D}\oplus\mathcal{G}$ so that,\\
$ \displaystyle \text{if}\hspace{0.3cm} {\bf i}=\big(\sum^{n}_{i=1}
P_{i}(x) \,\rd_{x}^{\,i}+P_{0}(x)\big)+
 f(x,y)\,{\theta}(x-y)\in\mIm, \hspace{0.3cm} \text{then} \hspace{0.3cm}  \xi({\bf i})=\big(\sum^{n}_{i=1} P_{i}(x)
\,\rd_{x}^{\,i}+P_{0}(x)\big)\,\widehat{\beta}+
f(x,y)\,\widehat{\theta},\nn$\\
 where \,$\widehat{\beta}=(1,0)$,
$\,\widehat{\theta}=(0,1)$ are unit vectors. Using $\xi$, the space
$\mIs$ enjoys a $\mc$-algebra structure with identity
$\widehat{\beta}$ in which the multiplication denoted by $\bullet$
satisfies the following rules:
 \bea  f(x,y)
\,\widehat{\theta}\bullet g(x,y)\,\widehat{\theta}\, &=& (
\int^{x}_{y} f(x,z) \,g(z,y)\, dz\,)\,\widehat{\theta}, \nn\label{ss4}\\
  P(x)\,\widehat{\beta} \bullet f(x,y)\,\widehat{\theta}\,&=& (P(x)f(x,y))\,\widehat{\theta}\,,
  \label{ss5}\nn\\
 f(x,y)\,\widehat{\theta} \bullet P(x)\,\widehat{\beta} &=& (P(y)f(x,y))\,\widehat{\theta},\label{ss2}\nn\\
  \rd_{x}\,\widehat{\beta} \bullet f(x,y)\,\,\widehat{\theta}\,&=& (\rd_{x}f(x,y))\,\widehat{\theta}\,+f(x,x)\widehat{\beta},
    \label{ss3}\nn\\
 f(x,y)\,\,\widehat{\theta}\, \bullet \rd_{x}\,\widehat{\beta} &=&
-(\rd_{y}f(x,y))\,\widehat{\theta}+f(x,x)\,\widehat{\beta},\nn\label{ttss1}\\
(\rd_{x}\,\widehat{\beta})^{\, n}\bullet
 (\rd_{x}\,\widehat{\beta})^{\,m}&=&(\rd_{x}\,\widehat{\beta})^{\,{n+m}}
 =\rd^{\,n+m}_{x}\,\widehat{\beta}, \hspace{0.75cm}
 P(x)\,\widehat{\beta}\bullet Q(x)\,\widehat{\beta} =
 P(x)Q(x)\,\widehat{\beta},
 \nn\\
  \label{ttss3}
  \rd_{x} \,\widehat{\beta}\bullet P(x)\,\widehat{\beta} &=& (\rd_{x}
P(x))\,\widehat{\beta}
 + P(x)\,\widehat{\beta}\bullet\rd_{x}\,\widehat{\beta}, \hspace{0.75cm}
P(x)\,\widehat{\beta}\bullet\rd_{x}\,\widehat{\beta}  =
P(x)\rd_{x}\,\widehat{\beta}.\label{ttss4}\nn\eea The above
consideration yields:
\begin{thm}\label{ershi}{The algebras $\mIs$ and $\mIm$ are isomorphic.}
\end{thm}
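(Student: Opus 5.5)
The plan is to show that the map $\xi:\mIm\to\mIs$ is a well-defined bijection and that, when $\mIs$ carries the multiplication $\bullet$, it satisfies $\xi({\bf i}*{\bf j})=\xi({\bf i})\bullet\xi({\bf j})$. The bullet product is really defined by transport of structure, via ${\bf u}\bullet{\bf v}:=\xi(\xi^{-1}({\bf u})*\xi^{-1}({\bf v}))$. The content of the theorem is therefore that this transported product is given by exactly the listed rules.

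\textbf{Step 1: $\xi$ is a well-defined linear bijection.} Every ${\bf i}\in\mIm$ can be written in the form (\ref{point}). This follows by repeatedly applying the rules of Proposition \ref{hesam} to words in the generators: each rule rewrites a product of two generator types as a sum of terms of the shape $P(x)\rd_x^{\,i}$ or $f(x,y)\theta(x-y)$. By Corollary \ref{kor}, the form (\ref{point}) is unique. Hence the coefficients $P_i$ and $f$ are functions of ${\bf i}$, and so $\xi$ is well defined. Linearity is clear from uniqueness. For surjectivity, any formal element $\sum P_i\rd_x^{\,i}\widehat{\beta}+f\widehat{\theta}$ is the image of the operator $\sum P_i\rd_x^{\,i}+f(x,y)\theta(x-y)\in\mIm$. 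For injectivity, if $\xi({\bf i})=0$ then all coefficients vanish, so ${\bf i}=0$.

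\textbf{Step 2: $\xi$ is multiplicative and $\widehat{\beta}$ is the identity.} With $\bullet$ defined by transport, $\xi$ is an algebra isomorphism by construction, and $\xi(1)=1\cdot\widehat{\beta}=\widehat{\beta}$. The work lies in checking that the displayed $\bullet$-rules are precisely the images under $\xi$ of the identities of Proposition \ref{hesam} and Lemma \ref{lemf4}. For example, $\xi(\rd_x*f\theta)=\xi\big((\rd_xf)\theta+f(x,x)\big)=(\rd_xf)\widehat{\theta}+f(x,x)\widehat{\beta}$. The remaining rules are checked in the same way, in each case using that the right-hand side of the corresponding identity in Proposition \ref{hesam} is already in normal form.

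\textbf{Step 3: bilinear extension.} The listed rules, extended bilinearly together with associativity, determine $\bullet$ on all of $\mIs=\mathcal{D}\oplus\mathcal{G}$. Associativity is inherited from composition in $\mIm$, since $\bullet$ is the transport of $*$. Consequently, computing with the rules in $\mIs$ always agrees with computing in $\mIm$.

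The main obstacle is Step 1, namely well-definedness. Without uniqueness of the normal form, $\xi$ could assign different images to a single operator, and $\mathcal{D}\oplus\mathcal{G}$ might fail to be a direct sum. This is resolved by invoking Theorem \ref{emam} and Corollary \ref{kor}. Everything else is formal bookkeeping.
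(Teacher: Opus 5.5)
Your proposal is correct and follows essentially the paper's own argument: the paper also gets the $\mc$-linear isomorphism $\xi$ from uniqueness of the normal form (Theorem \ref{emam}, Corollary \ref{kor}), then transports the composition $*$ to $\bullet$. So the isomorphism holds by construction, and the listed $\bullet$-rules are just the images under $\xi$ of Proposition \ref{hesam} and Lemma \ref{lemf4}. One minor wording point: the external sum $\mathcal{D}\oplus\mathcal{G}$ is always direct; what uniqueness actually supplies is $\mathcal{D}\cap\mathcal{G}=0$ inside $\mIm$, i.e.\ that $\xi$ is well defined.
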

\begin{Rema}\label{sublm}{\em{The sets $S_1=\{P(x)\,\widehat{\beta}\,| \,P(x)\in
\boldsymbol {B_1}\}$, $S_2=\{\big(\sum^{n}_{i=1} P_{i}(x)
\,\rd_{x}^{\,i}+P_{0}(x)\big)\,\widehat{\beta}\,| \,P_i(x)\in
\boldsymbol {B_1}, \,i=0,1,...,n;\, n\in \Na\}$,
$S_3=\{P(x)\,\widehat{\beta}+f(x,y)\,\widehat{\theta}\,| \,P(x)\in
\boldsymbol {B_1},\,f(x,y)\in\boldsymbol {B_2}\}$ and
$S_4=\{f(x,y)\,\widehat{\theta}\,| \,f(x,y)\in \boldsymbol {B_2}\}$
are four different subalgebras of $\mIs$. Furthermore the following
relations hold
 \be (\rd_{x}\,\widehat{\beta})^{n}=\rd_{x}^{\,n}\,\widehat{\beta},
\hspace{0.6cm} (\,\widehat{\theta})^{\,n}={(x-y)^{n-1} \over
(n-1)!}\,\,\widehat{\theta},\hspace{0.6cm}
(\rd_{x}\,\widehat{\beta})^{n}\bullet
 (\,\widehat{\theta})^{\,n}= (\,\widehat{\theta})^{\,n}\bullet
(\rd_{x}\,\widehat{\beta})^{n}=\,\widehat{\beta}.\nn\label{barezi}\ee}}
\end{Rema}
\begin{definition}\label{khalil}{ Let $\Psi=\big(\sum^{n}_{i=1} P_{i}(x)
\,\rd_{x}^{\,i}+P_{0}(x)\big)\,\widehat\beta\,+g(x,y)\widehat{\theta}$.
We define projection operators \be
\mathcal{\pi}_{\widehat\beta}[\Psi]=\big(\sum^{n}_{i=1} P_{i}(x)
\,\rd_{x}^{\,i}+P_{0}(x)\big)\,\widehat\beta, \hspace{1.5cm}
  {\pi}_{\widehat{\theta}}[\Psi]=g(x,y)\widehat{\theta}.\nn\ee The degree function and coefficient operator are defined
  by, \be
{\Deg}[\Psi]=\begin{cases} n & \mbox { if } P_{n}(x)\not\equiv 0 \displaystyle \\
0  & \mbox { if } P_{i}(x)\equiv 0\, for\, $i=1,...,n$.
\end{cases} \hspace{0.5cm}\Cof[\Psi]=\begin{cases}\, P_{n}(x) & \mbox { if } P_{n}(x)\not\equiv 0 \\
P_{0}(x)  & \mbox { if } P_{i}(x)\equiv 0\, for\, $i=1,...,n$.
\end{cases} \nn\ee\\}
\end{definition}
The following lemma is immediate.\\
\begin{lem}\label{salar2}{1-Let $\Psi$, $\Phi\in\mIs$ with
\, ${\rm Coef[\Phi]}{\rm Coef[\Psi]}\not\equiv 0$. Then \be
{\Deg}[\Psi\bullet\Phi]= {\rm
deg_{\widehat{\beta}}}[\Phi\bullet\Psi]= {\rm
deg_{\widehat{\beta}}}[\Psi]+{\Deg}[\Phi], \hspace{1cm} {\rm
Coef}[\Psi\bullet\Phi]=
 {\rm Coef}[\Phi\bullet\Psi]=  {\rm Coef}[\Phi] {\rm Coef}[\Psi].\nn\ee 2-Let
 $\Psi\in\mIs$ with
\,${\Deg}[\Psi]\geq n\geq 1$ and $\Phi=h(x){\widehat{\beta}}\bullet
(\widehat{\theta})^n\bullet q(x){\widehat{\beta}}$ with ${\rm
Coef[\Psi]}\, h(x)\, q(x) \not\equiv 0$.\, Then \be {\rm
deg_{\widehat{\beta}}}[\Psi\bullet\Phi]= {\rm
deg_{\widehat{\beta}}}[\Phi\bullet\Psi]= {\rm
deg_{\widehat{\beta}}}[\Psi]-n, \hspace{1cm} {\rm
Coef}[\Psi\bullet\Phi]=
 {\rm Coef}[\Phi\bullet\Psi]=  h(x) q(x) {\rm Coef}[\Psi].\nn\ee}
\end{lem}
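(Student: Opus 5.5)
Part 1 reduces to a computation of leading terms. Write
\[
\Psi=\Big(\sum_{i=0}^{n}P_i(x)\rd_x^{\,i}\Big)\widehat\beta+f(x,y)\widehat\theta,\qquad
\Phi=\Big(\sum_{j=0}^{m}Q_j(x)\rd_x^{\,j}\Big)\widehat\beta+g(x,y)\widehat\theta,
\]
with $n=\Deg[\Psi]$, $m=\Deg[\Phi]$, $P_n=\mathrm{Coef}[\Psi]$ and $Q_m=\mathrm{Coef}[\Phi]$. Expand $\Psi\bullet\Phi$ bilinearly into four pieces and use the rules of Proposition~\ref{hesam} transported to $\mIs$.

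\textbf{The $\widehat\beta$--$\widehat\beta$ piece.} Apply the Leibniz rule $\rd_x\widehat\beta\bullet P\widehat\beta=(\rd_xP)\widehat\beta+P\rd_x\widehat\beta$ repeatedly. This gives $P_n(x)Q_m(x)\rd_x^{\,n+m}\widehat\beta$ plus terms of order $<n+m$.

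\textbf{The mixed pieces.} Show by induction on $i$ that
\[
\rd_x^{\,i}\widehat\beta\bullet g\,\widehat\theta=(\rd_x^{\,i}g)\,\widehat\theta+\sum_{k<i}(\text{smooth})\,\rd_x^{\,k}\widehat\beta .
\]
The symmetric statement holds for $f\widehat\theta\bullet\rd_x^{\,j}\widehat\beta$, using $f\widehat\theta\bullet\rd_x\widehat\beta=-(\rd_yf)\widehat\theta+f(x,x)\widehat\beta$. Hence these pieces contribute only $\rd_x$-orders below $n$, respectively below $m$, so they do not reach $n+m$ when $m\ge 1$, resp.\ $n\ge1$.

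\textbf{The $\widehat\theta$--$\widehat\theta$ piece.} This lies in $\widehat\theta$ and contributes nothing to $\pi_{\widehat\beta}$.

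\textbf{Conclusion of part 1.} Since $P_nQ_m\not\equiv0$, the $\widehat\beta$-part has order exactly $n+m$ with coefficient $P_nQ_m$. The degenerate case $n=m=0$ needs care: the mixed pieces then lie in $\widehat\theta$ only, so the $\widehat\beta$-part is $P_0Q_0$. The same argument applied to $\Phi\bullet\Psi$ gives the symmetric statement, since coefficients commute.

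Part 2 follows the same pattern. By Remark~\ref{sublm}, $(\widehat\theta)^n=\frac{(x-y)^{n-1}}{(n-1)!}\widehat\theta$ and $(\rd_x\widehat\beta)^n\bullet(\widehat\theta)^n=\widehat\beta$.

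\textbf{Right multiplication.} Write $\Psi=\big(P_N\rd_x^{\,N}+\dots\big)\widehat\beta+f\widehat\theta$ with $N\ge n$. Then
\[
\Psi\bullet h\widehat\beta=\big(hP_N\rd_x^{\,N}+\text{lower}\big)\widehat\beta+(\cdots)\widehat\theta .
\]
Next compute $\rd_x^{\,i}\widehat\beta\bullet(\widehat\theta)^n$. For $i\ge n$ this equals $\rd_x^{\,i-n}\widehat\beta$, obtained by writing $\rd_x^{\,i}=\rd_x^{\,i-n}\bullet\rd_x^{\,n}$. For $i<n$ it equals $(\widehat\theta)^{n-i}$, which is pure $\widehat\theta$. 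Also $f\widehat\theta\bullet(\cdots)\widehat\theta$ is pure $\widehat\theta$. Right multiplication by $q\widehat\beta$ then uses part 1 with a degree-$0$ factor. Altogether, $\pi_{\widehat\beta}[\Psi\bullet\Phi]$ has leading term $hqP_N\rd_x^{\,N-n}$, as long as $hqP_N\not\equiv0$.

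\textbf{Left multiplication.} For $\Phi\bullet\Psi$, compute $(\widehat\theta)^n\bullet\rd_x^{\,i}\widehat\beta$ for $i\ge n$. Write it as $(\widehat\theta)^n\bullet\rd_x^{\,n}\widehat\beta\bullet\rd_x^{\,i-n}\widehat\beta=\rd_x^{\,i-n}\widehat\beta$. One must also handle the coefficient sitting between the two factors: $(\widehat\theta)^n\bullet P\rd_x^{\,N}\widehat\beta$. I would commute $P$ past $(\widehat\theta)^n$ by writing $(\widehat\theta)^n\bullet P\widehat\beta=P\widehat\beta\bullet(\widehat\theta)^n+(\text{element of }\mathcal G)$. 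This follows from $f\widehat\theta\bullet P\widehat\beta=P(y)f\widehat\theta$, since $P(y)-P(x)$ produces a pure $\widehat\theta$ correction. The correction, multiplied by $\rd_x^{\,N}$, produces $\widehat\beta$-terms only of order $<N-n$. To see this, pull out factors of $(x-y)$ and reduce using the integration-by-parts rule: $g\widehat\theta\bullet\rd_x^{\,N}\widehat\beta$ has $\widehat\beta$-order at most $N-1$, and the extra factor $(x-y)^{n}$ lowers it further.

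The main obstacle is this careful bookkeeping of orders, in particular justifying that $h\widehat\theta$-type terms composed with $\rd_x^{\,N}$ only give $\widehat\beta$-terms of order $\le N-1-(\text{vanishing order of }h\text{ on }x=y)$. I would prove this by induction using $f\widehat\theta\bullet\rd_x\widehat\beta=-(\rd_yf)\widehat\theta+f(x,x)\widehat\beta$, noting that $f(x,x)=0$ whenever $f$ vanishes on the diagonal.
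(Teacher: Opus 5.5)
Your proposal is correct, and it is essentially the verification the paper omits when it calls this lemma ``immediate'': you expand by leading orders using the multiplication rules, and the key point is that the mixed $\widehat\beta$--$\widehat\theta$ products lose $\rd_x$-order, which the paper states only afterwards as Lemma~\ref{mahmood}. In part~2, your handling of $\Phi\bullet\Psi$ (moving $P$ across $(\widehat\theta)^n$ at the cost of a kernel vanishing to order $n$ on the diagonal, which then gives $\widehat\beta$-order at most $N-n-1$) correctly settles the only step that is not purely formal.
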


\begin{lem}\label{mahmood}{For every $K(x,y)\widehat{\theta}$ and
\,${\mathcal{Q}_{\,n}}(x,\rd_{x})\,\widehat{\beta}$ in $\mIs$ where
${\rm
deg_{\widehat{\beta}}}[{\mathcal{Q}_{\,n}}(x,\rd_{x})\,\widehat{\beta}]=n$,
we have: \be {\rm
deg_{\widehat{\beta}}}\,[{\mathcal{Q}_{\,n}}(x,\rd_{x})\,\widehat{\beta}\bullet
K(x,y)\widehat{\theta}\,\,]< n, \hspace{0.5cm} {\rm
deg_{\widehat{\beta}}}\,[
K(x,y)\widehat{\theta}\bullet{\mathcal{Q}_{\,n}}(x,\rd_{x})\,\widehat{\beta}\,\,]<
n. \label{rahamin1}\nn \ee}
\end{lem}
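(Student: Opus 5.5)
We prove the claim by induction on $n$, using only the multiplication rules listed for $\mIs$ (the ones coming from Proposition \ref{hesam}). Write ${\mathcal{Q}_{\,n}}(x,\rd_{x})\,\widehat{\beta}=\sum_{i=0}^{n}P_i(x)\,\rd_x^{\,i}\,\widehat{\beta}$. Since $\bullet$ is bilinear and $P_i(x)\,\rd_x^{\,i}\,\widehat{\beta}=P_i(x)\widehat{\beta}\bullet(\rd_x\widehat{\beta})^{i}$, it is enough to understand the products $(\rd_x\widehat{\beta})^{i}\bullet K(x,y)\widehat{\theta}$ and $K(x,y)\widehat{\theta}\bullet(\rd_x\widehat{\beta})^{i}$. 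Left or right multiplication by $P_i(x)\widehat{\beta}$ does not increase $\Deg$. On the $\widehat{\beta}$ part this is immediate. On the $\widehat{\theta}$ part it follows from $f\widehat{\theta}\bullet P\widehat{\beta}=(P(y)f)\widehat{\theta}$ and $P\widehat{\beta}\bullet f\widehat{\theta}=(Pf)\widehat{\theta}$.

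\textbf{Key claim.} For every $i\ge 1$ and every $K\in\boldsymbol{B_2}$,
\[
(\rd_x\widehat{\beta})^{i}\bullet K(x,y)\widehat{\theta}=\mathcal{A}_{i-1}(x,\rd_x)\,\widehat{\beta}+(\rd_x^{\,i}K(x,y))\,\widehat{\theta},
\]
where $\mathcal{A}_{i-1}$ is a differential operator of order at most $i-1$. For $i=0$ the product is purely in $\mathcal{G}$, so its degree is $0$.

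We prove the key claim by induction on $i$. The case $i=1$ is exactly the rule $\rd_x\widehat{\beta}\bullet K\widehat{\theta}=(\rd_xK)\widehat{\theta}+K(x,x)\widehat{\beta}$. For the step, multiply the identity for $i$ on the left by $\rd_x\widehat{\beta}$. The piece $\rd_x\widehat{\beta}\bullet\mathcal{A}_{i-1}\widehat{\beta}$ has order at most $i$ by the Leibniz rule $\rd_x\widehat{\beta}\bullet P\widehat{\beta}=(\rd_xP)\widehat{\beta}+P\rd_x\widehat{\beta}$. The piece $\rd_x\widehat{\beta}\bullet(\rd_x^{\,i}K)\widehat{\theta}$ equals $(\rd_x^{\,i+1}K)\widehat{\theta}+(\rd_x^{\,i}K)(x,x)\widehat{\beta}$, whose $\widehat{\beta}$ part has order $0\le i$. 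Summing over $i\le n$ with coefficients $P_i$, the product ${\mathcal{Q}_{\,n}}\widehat{\beta}\bullet K\widehat{\theta}$ has $\widehat{\beta}$-part of order at most $n-1$, so its degree is less than $n$.

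The right product is handled symmetrically, using $f\widehat{\theta}\bullet\rd_x\widehat{\beta}=-(\rd_yf)\widehat{\theta}+f(x,x)\widehat{\beta}$. By induction,
\[
K\widehat{\theta}\bullet(\rd_x\widehat{\beta})^{i}=\mathcal{B}_{i-1}(x,\rd_x)\widehat{\beta}+(-1)^i(\rd_y^{\,i}K)\widehat{\theta},
\]
with $\mathcal{B}_{i-1}$ of order at most $i-1$. In the inductive step, $\mathcal{B}_{i-1}\widehat{\beta}\bullet\rd_x\widehat{\beta}$ raises the order by exactly one, giving order at most $i$. The $\widehat{\theta}$ term contributes only an order-$0$ piece. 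Then $K\widehat{\theta}\bullet P_i\rd_x^{\,i}\widehat{\beta}=(P_i(y)K)\widehat{\theta}\bullet(\rd_x\widehat{\beta})^i$, which again has degree at most $i-1$.

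\textbf{Degree convention.} The only delicate point is how $\Deg$ treats order $0$. When $n\ge1$, both products have $\widehat{\beta}$-part of order at most $n-1$, so $\Deg<n$ holds directly from Definition \ref{khalil}. The statement cannot be meant literally for $n=0$, since then the left side is $0$, which is not less than $0$. So we will either restrict to $n\ge1$ or read the inequality as $\Deg\le\max(n-1,0)$ and note this explicitly. The hard step is therefore bookkeeping of orders through the Leibniz rule, not any analytic estimate.
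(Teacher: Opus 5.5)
Your proposal is correct and takes essentially the same route as the paper. Both argue by induction on the power of $\rd_x$, using the rule $\rd_x\widehat{\beta}\bullet f\widehat{\theta}=(\rd_x f)\widehat{\theta}+f(x,x)\widehat{\beta}$ (or its right-hand analogue) together with the Leibniz rule, and then sum over the terms $P_i(x)\rd_x^{\,i}\widehat{\beta}$. Your explicit tracking of the $\widehat{\theta}$-part, your written-out treatment of the right product (which the paper only calls ``similar''), and your remark that the statement needs $n\ge 1$ (as the paper's induction from $n=1$ implicitly assumes) are accurate refinements, not a different argument.
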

{\bf Proof.}  We prove this by showing $\forall P(x)\in \boldsymbol
{B_{1}}$ we have the following relations, \be (a)\hspace{0.5cm} {\rm
deg_{\widehat{\beta}}}\,[(P(x)\rd^{\,n}_{x})\,\widehat{\beta}\bullet
K(x,y)\widehat{\theta}]< n\,, \hspace{0.5cm}(b)\hspace{0.5cm} {\rm
deg_{\widehat{\beta}}}\,[ K(x,y)\widehat{\theta}\bullet
(P(x)\rd^{\,n}_{x})\,\,\widehat{\beta}]< n. \label{raha1}\nn\ee

These relations are proved by induction. For $n=1$ we have
\be(P(x)\rd_{x})\,\widehat{\beta}\bullet
K(x,y)\widehat{\theta}=P(x)\,\widehat{\beta}\bullet(K(x,x)\,\widehat{\beta}+\rd_{x}
K(x,y)\widehat{\theta})=((P(x)K(x,x))\,\widehat{\beta}+P(x)\rd_{x}
K(x,y)\widehat{\theta})\nn\ee Therefore $ {\rm
deg_{\widehat{\beta}}}\,[(P(x)\rd_{x})\,\widehat{\beta}\bullet
K(x,y)\widehat{\theta}]= 0. $\\
Now assume that (a) is correct for $n=k$. By taking $P(x)=1$ we
have, ${\rm
deg_{\widehat{\beta}}}\,[(\rd^{\,k}_{x})\,\widehat{\beta}\bullet
K(x,y)\widehat{\theta}]< k$, therefore,
$(\rd^{\,k}_{x})\,\widehat{\beta}\bullet
K(x,y)\widehat{\theta}={\mathcal{Q}_{\,l}}(x,\rd_{x})\widehat{\beta}+R(x,y)\widehat{\theta}$
where ${\rm
deg_{\widehat{\beta}}}[{\mathcal{Q}_{\,l}}(x,\rd_{x})\,\widehat{\beta}]=l<k$.
Thus,

\bea (P(x)\,\rd^{\,k+1}_{x})\,\widehat{\beta}\bullet
K(x,y)\widehat{\theta}&=&
(P(x)\,\rd_{x})\,\widehat{\beta}\bullet(\rd^{\,k}_{x})\,\widehat{\beta}\bullet
K(x,y)\widehat{\theta}\nn\\&=&
(P(x)\,\rd_{x})\,\widehat{\beta}\bullet
\big({\mathcal{Q}_{\,l}}(x,\rd_{x})\,\widehat{\beta}\,+\,R(x,y)\widehat{\theta}\,\,\big)
\nn\\&=&{\mathcal{Q}_{\,l+1}}(x,\rd_{x})\,\widehat{\beta}+(P(x)R(x,x))\,\widehat{\beta}+(P(x)\rd_{x}
R(x,y))\widehat{\theta},\nn\eea where
${\mathcal{Q}_{\,l+1}}(x,\rd_{x})\,\widehat{\beta}=(P(x)\,\rd_{x})\,\widehat{\beta}\bullet
{\mathcal{Q}_{\,l}}(x,\rd_{x})\,\widehat{\beta}$. From the last line
above we get, \be {\rm
deg_{\widehat{\beta}}}\,[(P(x)\rd^{\,k+1}_{x})\,\widehat{\beta}\bullet
K(x,y)\widehat{\theta}]\le l+1<k+1.\nn\ee The conclusion holds for
${\mathcal{Q}_{\,n}}(x,\rd_{x})\,\widehat{\beta}$. The relation (b)
can be proved similarly.{\qed} \\

By Lemma \ref{mahmood} it is obvious that if ${\rm
deg_{\widehat{\beta}}}[{\mathcal{Q}_{\,n}}(x,\rd_{x})\,\widehat{\beta}]=n$
then for any $K(x,y)\widehat{\theta}$ in $\mIs$ we have: \be {\rm
deg_{\widehat{\beta}}}\,[{\mathcal{Q}_{\,n}}(x,\rd_{x})\,\widehat{\beta}\bullet
(K(x,y)\widehat{\theta})^{n}]= 0, \hspace{0.5cm} {\rm
deg_{\widehat{\beta}}}\,[
(K(x,y)\widehat{\theta})^n\bullet{\mathcal{Q}_{\,n}}(x,\rd_{x})\,\widehat{\beta}]=
0.  \label{rahamin2}\nn\ee

Now we state the following result which is a consequence of
Proposition \ref{sim1}; the standard Volterra estimates give
$C^{\infty}[\Delta]$ convergence of the resolvent series, so Remark
\ref{zari} applies.

\begin{pro}\label{sim}{1-For any element $h(x,y)\,\widehat{\theta}\,\,\in\mIs$ there exits unique element
$R(x,y)\,\widehat{\theta}\in\mIs$ such that \be
h(x,y)\,\widehat{\theta}\bullet
R(x,y)\,\widehat{\theta}=R(x,y)\,\widehat{\theta}\bullet
h(x,y)\,\widehat{\theta}=R(x,y)\,\widehat{\theta}-h(x,y)\,\widehat{\theta},\label{lales}\ee
and $R(x,y)\,\widehat{\theta}$ is given by: \be
R(x,y)\,\widehat{\theta} =\sum^{\infty}_{n=1}
\,\,(h(x,y)\,\widehat{\theta}\,)^{\, n}.\label{iraj}\ee The element
$(\,\widehat{\beta}-h(x,y)\,\widehat{\theta})$ in algebra $\mIs$ is
invertible, \be (\widehat{\beta}-h(x,y)\,\widehat{\theta})^{-1} =
\widehat{\beta}+R(x,y)\,\widehat{\theta}=\widehat{\beta}+
\sum^{\infty}_{n=1} {\,\,(h(x,y)\,\widehat{\theta}\,)}^{\,
 n}.\label{taghdim}\ee}\end{pro}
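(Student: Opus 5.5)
The plan is to transport Proposition \ref{sim1} to $\mIs$ through the isomorphism $\xi$ of Theorem \ref{ershi}, and then to identify the resolvent kernel with the series (\ref{iraj}) taken in $\mIs$. Under $\xi$, the operator $h(x,y)\theta(x-y)$ goes to $h(x,y)\,\widehat{\theta}$, the identity goes to $\widehat{\beta}$, and $*$ goes to $\bullet$. So part 1 of Proposition \ref{sim1} gives existence and uniqueness of an $R(x,y)\in\boldsymbol{B_2}$ with (\ref{lales}). Uniqueness holds at the level of kernels because $\mathcal{G}$ is free: by Lemma \ref{jafar}, an element $f\,\widehat{\theta}$ vanishes only if $f=0$ on $\triangle$. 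Part 2 of Proposition \ref{sim1} likewise gives invertibility of $\widehat{\beta}-h\,\widehat{\theta}$ with inverse $\widehat{\beta}+R\,\widehat{\theta}$. Alternatively, this can be checked directly by expanding $(\widehat{\beta}-h\widehat{\theta})\bullet(\widehat{\beta}+R\widehat{\theta})=\widehat{\beta}+R\widehat{\theta}-h\widehat{\theta}-h\widehat{\theta}\bullet R\widehat{\theta}=\widehat{\beta}$ using (\ref{lales}), and similarly on the other side.

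Next I identify the powers. By the first multiplication rule in $\mIs$ and induction on $n$, $(h\,\widehat{\theta})^{n}=h_n(x,y)\,\widehat{\theta}$, where $h_1=h$ and $h_n(x,y)=\int_y^x h(x,z)h_{n-1}(z,y)\,dz$. These are exactly the iterated kernels of (\ref{beygi1}). Hence the partial sums $\sum_{i=1}^{N}(h\widehat{\theta})^i$ correspond to the kernels $S_N=\sum_{i=1}^N h_i$. The meaning of the infinite sum in (\ref{iraj}) is then the limit, in the operator sense of Remark \ref{zari}, of $S_N\,\widehat{\theta}$.

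The main obstacle is justifying that $S_N\to R$ in $C^\infty[\triangle]$, not merely uniformly. Only then does Remark \ref{zari} give operator convergence, and only then is the limit an element of $\mIs$ equal to $R\,\widehat{\theta}$. I would first prove the standard estimate $|h_n(x,y)|\le M^n (x-y)^{n-1}/(n-1)!$, with $M=\sup_\triangle|h|$, by induction. For derivatives, I would differentiate the recursion. The identity $\rd_x h_n(x,y)=h(x,x)h_{n-1}(x,y)+\int_y^x \rd_x h(x,z)\,h_{n-1}(z,y)\,dz$, and its analogue in $y$, let one show inductively that for each multi-index of order $k$ there is a bound $C_k^{\,n}\, n^{k}(b-a)^{n-k-1}/(n-k-1)!$ for $n>k$. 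The boundary terms produce at most polynomially many extra terms in $n$, each dominated by lower-order derivative bounds of $h_{n-j}$. Since factorial decay beats any geometric-times-polynomial growth, every derivative series converges uniformly on $\triangle$. Therefore $S_N\to R$ in $C^\infty[\triangle]$, with $R\in\boldsymbol{B_2}$ and $R$ coinciding with the continuous resolvent of (\ref{beygi1}).

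Finally, I would confirm (\ref{lales}) directly from the series as a consistency check. We have $h\widehat{\theta}\bullet S_N\widehat{\theta}=S_{N+1}\widehat{\theta}-h\widehat{\theta}$, and the same holds with the factors reversed, since $h_{n+1}$ can also be written as $\int_y^x h_n(x,z)h(z,y)\,dz$ by associativity of $\bullet$. Passing to the limit using the continuity statement in Remark \ref{zari} gives (\ref{lales}) for $R=\lim S_N$. By uniqueness, this $R$ is the element of part 1, which yields (\ref{iraj}) and (\ref{taghdim}).
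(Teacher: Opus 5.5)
Your proposal is correct and follows the same route as the paper, which records this proposition as a consequence of Proposition \ref{sim1} transported to $\mIs$, noting only that standard Volterra estimates give $C^{\infty}[\triangle]$ convergence of the resolvent series so that Remark \ref{zari} applies. You spell out what the paper leaves implicit: the identification $(h\,\widehat{\theta})^{n}=h_n\,\widehat{\theta}$ with the iterated kernels, the factorial-decay derivative bounds, and the limit passage in $h\,\widehat{\theta}\bullet S_N\,\widehat{\theta}=S_{N+1}\,\widehat{\theta}-h\,\widehat{\theta}$. These details are sound.
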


Stated otherwise, equation (\ref{lales}) says $\sum_{n=1}^{\infty}
\Phi^{n}\in S_4$  for $\Phi\in S_{4}$ and \bea
\Phi\bullet(\sum_{n=1}^{\infty} \Phi^{n})=(\sum_{n=1}^{\infty}
\Phi^{n})\bullet
\Phi=\sum_{n=2}^{\infty}\Phi^{n}=(\sum_{n=1}^{\infty}\Phi^{n})-\Phi.\nn\\\nn\\\nn\eea

For convenience we set $ \PP=\{P(x)\in {\boldsymbol{B_1}}
\,\,|\,\,P(x)\ne 0\,\, for\,\, x\in [a,b]\,\}.$

\begin{lem}\label{hasan}{The following types of elements are invertible in
$\mIs$.\\
1.\,\, All elements $P_{0}(x)\,\widehat{\beta}$ where $P_{0}(x)\in\PP$,\\
2.\,\, All elements $P_{0}(x)\,\widehat{\beta}+f(x,y)\,\widehat{\theta}$ where $P_{0}(x)\in\PP$ and $f(x,y)\,\widehat{\theta}\ne 0$,\\
{3.\,\,All elements $\big(\sum^{n}_{i=1} P_{i}(x)
\,\rd_{x}^{\,i}+P_{0}(x)\big)\,\widehat{\beta}+
 f(x,y)\,\widehat{\theta}$, where  $P_{n}(x)\in\PP$  and $n\ge1$.}
}\end{lem}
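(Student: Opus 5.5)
The plan is to transport Lemma \ref{iraj45} through the isomorphism $\xi$ of Theorem \ref{ershi}, and to argue directly inside $\mIs$ so that the inverses are written explicitly with the multiplication rules for $\bullet$.

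\textbf{Case 1.} For $P_0(x)\in\PP$ the function $1/P_0(x)$ is smooth on $[a,b]$. The rule $P(x)\widehat\beta\bullet Q(x)\widehat\beta=P(x)Q(x)\widehat\beta$ then gives
\[
\big(P_0(x)\widehat\beta\big)^{-1}=\tfrac{1}{P_0(x)}\widehat\beta .
\]

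\textbf{Case 2.} I would factor
\[
P_0(x)\widehat\beta+f(x,y)\widehat\theta
= P_0(x)\widehat\beta\bullet\Big(\widehat\beta-\big(-\tfrac{f(x,y)}{P_0(x)}\big)\widehat\theta\Big),
\]
which uses $P(x)\widehat\beta\bullet g\widehat\theta=(Pg)\widehat\theta$. Proposition \ref{sim} with $h=-f/P_0\in\boldsymbol{B_2}$ shows the second factor is invertible, with inverse $\widehat\beta+R\widehat\theta$. The inverse of the product is then
\[
(\widehat\beta+R\widehat\theta)\bullet \tfrac{1}{P_0(x)}\widehat\beta ,
\]
and by the rule $f\widehat\theta\bullet P\widehat\beta=(P(y)f)\widehat\theta$ this equals $\tfrac1{P_0(x)}\widehat\beta+\tfrac{R(x,y)}{P_0(y)}\widehat\theta$. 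The hypothesis $f\widehat\theta\neq0$ plays no role in the argument.

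\textbf{Case 3.} This is the main step. Write $\Psi=\big(\sum_{i=1}^n P_i\rd_x^{\,i}+P_0\big)\widehat\beta+f\widehat\theta$ with $P_n\in\PP$, and use Remark \ref{sublm}, namely $(\rd_x\widehat\beta)^n\bullet\widehat\theta^{\,n}=\widehat\theta^{\,n}\bullet(\rd_x\widehat\beta)^n=\widehat\beta$. Multiply $\Psi$ on the right by $\widehat\theta^{\,n}=\frac{(x-y)^{n-1}}{(n-1)!}\widehat\theta$.

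1. Each term $P_i\rd_x^{\,i}\widehat\beta\bullet\widehat\theta^{\,n}$ equals $P_i\widehat\beta\bullet\widehat\theta^{\,n-i}=P_i(x)\frac{(x-y)^{n-i-1}}{(n-i-1)!}\widehat\theta$ for $i<n$, and equals $P_n\widehat\beta$ for $i=n$.
2. The kernel term $f\widehat\theta\bullet\widehat\theta^{\,n}$ lies in $S_4$, by the first multiplication rule.

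Hence $\Psi\bullet\widehat\theta^{\,n}=P_n(x)\widehat\beta+g(x,y)\widehat\theta$ with $g\in\boldsymbol{B_2}$, the same $g$ (up to sign and the factor $P_n$) computed in the proof of Lemma \ref{iraj45}. By Case 2, or Case 1 if $g=0$, this element has an inverse $\Lambda$. Then $\Psi=(P_n\widehat\beta+g\widehat\theta)\bullet(\rd_x\widehat\beta)^n$, because $\widehat\theta^{\,n}\bullet(\rd_x\widehat\beta)^n=\widehat\beta$ and multiplication is associative. It follows that $\Psi^{-1}=\widehat\theta^{\,n}\bullet\Lambda$, and this is a two-sided inverse since both factors are two-sided invertible.

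The main obstacle is keeping this bookkeeping correct inside $\mIs$, where $\rd_x$ has only the right inverse structure given in Remark \ref{sublm}. Concretely, I must check that the identity $\rd_x^{\,i}\widehat\beta\bullet\widehat\theta^{\,n}=\widehat\theta^{\,n-i}$ for $i<n$ follows from $\rd_x\widehat\beta\bullet f\widehat\theta=(\rd_xf)\widehat\theta+f(x,x)\widehat\beta$. It does, because $f(x,x)=0$ for $f=(x-y)^{k}/k!$ with $k\ge1$.
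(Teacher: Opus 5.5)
Your proof is correct and follows the paper's argument. Cases 1 and 2 are identical to the paper's. In case 3, your factorization $\Psi=(P_n\widehat\beta+g\widehat\theta)\bullet(\rd_x\widehat\beta)^n$, with $\Psi^{-1}=\widehat\theta^{\,n}\bullet\Lambda$, is the paper's decomposition $P_n\widehat\beta\bullet(\widehat\beta-h\widehat\theta)\bullet\rd_x^{\,n}\widehat\beta$, except that you keep $P_n$ inside the middle factor and invert it via case 2.

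One wording point: Remark \ref{sublm} gives $\widehat\theta^{\,n}$ as a two-sided inverse of $(\rd_x\widehat\beta)^n$, and that is what you actually use, so the phrase ``only the right inverse structure'' is misleading.
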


{\bf Proof.} For a type 1 element we trivially have
$\displaystyle(P_{0}(x)\,\widehat{\beta})^{-1}=({1\over
P_{0}(x)})\,\widehat{\beta}$.\\
For a type 2 element, since \,$\displaystyle
\Psi=P_{0}(x)\,\widehat{\beta}+f(x,y)\,\widehat{\theta}=P_{0}(x)\,\widehat{\beta}\bullet(\,\widehat\beta-{-f(x,y)\over
P_{0}(x)}\,\widehat{\theta}\,)$, we have the inverse as
\be\Psi^{-1}= (\,\widehat\beta+\sum^{\infty}_{n=1} ({-f(x,y)\over
P_{0}(x)}\,\widehat{\theta}\,)^{n})\bullet({1\over
P_{0}(x)})\,\widehat{\beta}\,\,=\,\,({1\over
P_{0}(x)}\,\widehat{\beta}) +\big(\sum^{\infty}_{n=1} ({-f(x,y)\over
P_{0}(x)}\,\widehat{\theta}\,)^{\, n}\,\big)\bullet({1\over P_{0}(x)})\,\widehat{\beta}.\nn\ee\\
For a type 3 element $\Phi=\big(\sum^{n}_{k=1} P_{k}(x)
\rd_{x}^{\,k}+P_{0}(x)\big)\,\widehat{\beta}+
 f(x,y)\,\widehat{\theta}$  we write
\be \Phi=\, P_{n}(x)\,\widehat{\beta}\bullet\big(\,\widehat\beta
-\sum_{k=0}^{n-1} \,{-P_{k}(x)\over
P_{n}(x)}\widehat{\beta}\bullet(\,\widehat{\theta}\,)^{\,
{n-k}}\,-{-f(x,y)\over
P_{n}(x)}\,\widehat{\theta}\,\,\bullet\,(\,\widehat{\theta}\,)^{n}\big)
\,\bullet \,{\rd^{\,n}_{x}}\,\widehat{\beta},\nn\ee
\bea\Phi^{-1}&=&(\,\widehat{\theta}\,)^{\, n}
 \bullet\big(\,\widehat\beta+ \sum_{r=1}^{\infty}\big(\sum_{k=0}^{n-1} \,{-P_{k}(x)\over
P_{n}(x)}\widehat{\beta}\bullet\,(\,\widehat{\theta}\,)^{\,
{n-k}}\,+{-f(x,y)\over
P_{n}(x)}\,\widehat{\theta}\,\,\bullet\,(\,\widehat{\theta}\,)^{\,
n} \big)^{\,r}\big)\bullet
({P_{n}(x)\,\widehat{\beta}})^{-1}\nn\\&=&\widehat{\theta}^{\,
n}\bullet({1\over
P_{n}(x)})\,\widehat{\beta}+\,\widehat{\theta}\,^{\, n}
\bullet\Big(\sum_{r=1}^{\infty}\big(\sum_{k=0}^{n-1}
\,{-P_{k}(x)\over
P_{n}(x)}\widehat{\beta}\bullet\,(\,\widehat{\theta}\,)^{\,\,
{n-k}}\,+{-f(x,y)\over
P_{n}(x)}\,\widehat{\theta}\,\bullet(\,\widehat{\theta}\,)^{\,n}
\big)^{\,r}\Big)\bullet ({1\over
P_{n}(x)})\,\widehat{\beta}.\nn\qed\eea

\begin{definition}\label{albert}{Let  $\mathcal U$ denote the subgroup generated by elements of types 1,2 and 3 mentioned in Lemma \ref{hasan}.
Also define : \be W_1=\{\Psi\in \mIs\,|\,\,\, {\rm
deg_{\widehat{\beta}}}[\Psi]=0; {\rm Coef[\Psi]}\in \PP \}, \nn\ee
\be W_2=\{\Psi\in \mIs\,|\,\,\, {\rm
deg_{\widehat{\beta}}}[\Psi]\ge1; {\rm Coef[\Psi]}\in\PP \}, \nn\ee
\be W_3=\{\Psi\in \mIs\,|\,\,\, {\rm
deg_{\widehat{\beta}}}[\Psi^{-1}]\ge1; {\rm Coef[\Psi^{-1}]}\in\PP
\}. \nn\ee }
\end{definition}

Note that type 1 and 2 generators of ${\mathcal U}$ and their
inverses are elements of $W_1$. Generators of type 3 are elements of
$W_2$ while their inverses are elements of $W_3$.\\

\begin{pro}\label{salar}{ With the above notation ${\mathcal U}=W_1\cup W_2 \cup W_3 $.\\}
\end{pro}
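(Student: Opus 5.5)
The plan is to prove the two inclusions separately. The inclusion $W_1\cup W_2\cup W_3\subseteq{\mathcal U}$ is the easy one, and I would do it first. An element $\Psi\in W_1$ has ${\rm deg_{\widehat{\beta}}}[\Psi]=0$ and ${\rm Coef}[\Psi]\in\PP$, so $\Psi=P_0(x)\widehat\beta+f(x,y)\widehat\theta$ with $P_0\in\PP$. This is a generator of type 1 (if $f=0$) or of type 2 in Lemma \ref{hasan}. An element of $W_2$ is exactly a type 3 generator. If $\Psi\in W_3$, then $\Psi^{-1}$ has degree $\ge1$ and leading coefficient in $\PP$. Hence $\Psi^{-1}$ is a type 3 generator, and $\Psi=(\Psi^{-1})^{-1}\in{\mathcal U}$.

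For ${\mathcal U}\subseteq W_1\cup W_2\cup W_3$, I would show that the union contains the generators and is closed under inverses and products. Closure under inverses is immediate. By the explicit formula in Lemma \ref{hasan}, the inverse of an element of $W_1$ has degree $0$ and coefficient $1/P_0\in\PP$, so it lies in $W_1$. Inverses swap $W_2$ and $W_3$ by the definition of $W_3$.

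For products, I would first establish a normal form. Every element of $W_2$ of degree $n$ can be written as $A\bullet(\rd_x\widehat\beta)^n$ with $A\in W_1$. Indeed, set $A=\Psi\bullet\widehat\theta^{\,n}$. By Lemma \ref{salar2}(2), $A$ has degree $0$ and coefficient ${\rm Coef}[\Psi]\in\PP$, so $A\in W_1$. Then $A\bullet(\rd_x\widehat\beta)^n=\Psi$ by Remark \ref{sublm}. Symmetrically, $\Psi=(\rd_x\widehat\beta)^n\bullet A'$ with $A'=\widehat\theta^{\,n}\bullet\Psi\in W_1$. Taking inverses, every element of $W_3$ has the forms $A^{-1}\bullet\widehat\theta^{\,n}$ and $\widehat\theta^{\,n}\bullet A'^{-1}$ with $A^{-1},A'^{-1}\in W_1$. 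Next, $W_1$ is closed under products by Lemma \ref{salar2}(1), since the product has degree $0$ and coefficient in $\PP$.

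I would then prove two commutation rules: $(\rd_x\widehat\beta)^n\bullet A=A''\bullet(\rd_x\widehat\beta)^n$ and $\widehat\theta^{\,n}\bullet A=A'''\bullet\widehat\theta^{\,n}$, with $A'',A'''\in W_1$ whenever $A\in W_1$.
\begin{itemize}
\item For the first rule, $(\rd_x\widehat\beta)^n\bullet A$ has degree $n$ and coefficient ${\rm Coef}[A]\in\PP$ by Lemma \ref{salar2}(1). So it lies in $W_2$, and the normal form above applies.
\item For the second rule, $\widehat\theta^{\,n}\bullet A$ has inverse $A^{-1}\bullet(\rd_x\widehat\beta)^n\in W_2$. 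So $\widehat\theta^{\,n}\bullet A\in W_3$, and the normal form gives $A'''$.
\end{itemize}

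With these rules, any product of two elements of the union reduces to $A\bullet E$. Here $A\in W_1$, and $E$ is a product of powers of $\rd_x\widehat\beta$ and $\widehat\theta$. Such a product collapses by Remark \ref{sublm} to $\widehat\beta$, to $(\rd_x\widehat\beta)^k$, or to $\widehat\theta^{\,k}$. The three outcomes land in $W_1$, in $W_2$ (by Lemma \ref{salar2}(1)), and in $W_3$ (the inverse is $(\rd_x\widehat\beta)^k\bullet A^{-1}\in W_2$), respectively.

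The main obstacle is the careful use of Lemma \ref{salar2}(2) in the factorization $\Psi=(\Psi\bullet\widehat\theta^{\,n})\bullet(\rd_x\widehat\beta)^n$. One must check that a degree-$0$ element with coefficient in $\PP$ is exactly an element of $W_1$, and that the nonvanishing hypothesis on the product of coefficients is met. This holds because all the coefficients involved lie in $\PP$.
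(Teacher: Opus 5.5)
Your proof is correct, and it takes a different route from the paper's.

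\textbf{The paper's route.} The paper only proves that $W_1\cup W_2\cup W_3$ is closed under multiplication. It does this by a six-case analysis of products $W_i\bullet W_j$, computing the degree and leading coefficient of each product. The hard case, $W_2\bullet W_3$, is split into $m>n$, $m=n$, $m<n$. There the $W_3$ factor is expanded through the resolvent-series inverse of Lemma \ref{hasan} as $\widehat{\theta}^{\,n}\bullet(1/P_n)\widehat{\beta}+\widehat{\theta}^{\,n}\bullet R\,\widehat{\theta}$. Lemma \ref{mahmood} is then used to show that the $R\,\widehat{\theta}$ part cannot reach the leading term.

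\textbf{Your route.} You factor $W_2=W_1\bullet(\partial_x\widehat{\beta})^n$ and $W_3=W_1\bullet\widehat{\theta}^{\,n}$, using Lemma \ref{salar2}(2) and the identity $\widehat{\theta}^{\,n}\bullet(\partial_x\widehat{\beta})^n=(\partial_x\widehat{\beta})^n\bullet\widehat{\theta}^{\,n}=\widehat{\beta}$. You then show that conjugation by powers of $\partial_x\widehat{\beta}$ preserves $W_1$. This reduces every product to arithmetic in the infinite cyclic group generated by $\partial_x\widehat{\beta}$.

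\textbf{What each buys.} Your argument never needs Lemma \ref{mahmood} or the resolvent expansion of type 3 inverses. It proves both inclusions explicitly, whereas the paper leaves implicit that the union contains the generators and their inverses. It also exposes more structure: $\mathcal U$ is the semidirect product of the normal subgroup $W_1$ with $\langle\partial_x\widehat{\beta}\rangle\cong\mathbb{Z}$. Under this description $W_1$, $W_2$, $W_3$ are exactly the units with exponent zero, positive and negative. The paper's computation does record explicit leading coefficients such as $\mathrm{Coef}[\Psi\bullet\Phi]=\mathrm{Coef}[\Psi]/P_n$. These are also recoverable from your factorization, because your conjugation rules preserve $\mathrm{Coef}$.

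\textbf{A harmless slip.} Inverting $\Psi=A\bullet(\partial_x\widehat{\beta})^n$ gives $\widehat{\theta}^{\,n}\bullet A^{-1}$, not $A^{-1}\bullet\widehat{\theta}^{\,n}$, so the labels on your two $W_3$ normal forms are swapped. Both shapes do hold with some factor in $W_1$, so nothing downstream is affected.
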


{\bf Proof.} It suffices to prove that $W_1\cup W_2 \cup W_3 $ is
closed under multiplication. We proceed by considering different
cases and use Lemma (\ref{salar2}) frequently.\\

\begin{itemize}
{\item[]{1.\hspace{0.5cm} For $\Psi$, $\Phi\in W_1$ we have, \be
{\rm deg_{\widehat{\beta}}}[\Psi\bullet\Phi]= {\rm
deg_{\widehat{\beta}}}[\Phi\bullet\Psi]= 0, \hspace{0.6cm} {\rm
Coef}[\Psi\bullet\Phi]=
 {\rm Coef}[\Phi\bullet\Psi]=  {\rm Coef}[\Phi] {\rm Coef}[\Psi]\in\PP,\nn\ee
and therefore in this case, $ \Psi\bullet\Phi \in W_1$ and
$\Phi\bullet\Psi \in W_1$. This shows that $W_1$ is a subgroup of
$\mathcal U$.}

\item[]{2.\hspace{0.5cm} For $\Psi \in W_1$, $\Phi\in W_2$ we have, \be {\rm
deg_{\widehat{\beta}}}[\Psi\bullet\Phi]= {\rm
deg_{\widehat{\beta}}}[\Phi\bullet\Psi]= {\rm
deg_{\widehat{\beta}}}[\Phi]\geq  1,\hspace{0.6cm} {\rm
Coef}[\Psi\bullet\Phi]=
 {\rm Coef}[\Phi\bullet\Psi]=  {\rm Coef}[\Phi] {\rm Coef}[\Psi]\in\PP,\nn\ee
  and therefore in this case, $ \Psi\bullet\Phi \in W_2$ and $\Phi\bullet\Psi \in
 W_2$.}

\item[]{3.\hspace{0.5cm}  For $\Psi \in W_1$, $\Phi\in W_3$ from the
 fact that $W_1$ is a subgroup of $\mathcal U$ and by definition of $W_3$ we have $(\Psi)^{-1} \in W_1$, $(\Phi)^{-1}\in W_2$ and  \be {\rm
deg_{\widehat{\beta}}}[\Psi^{-1}\bullet\Phi^{-1}]= {\rm
deg_{\widehat{\beta}}}[\Phi^{-1}\bullet\Psi^{-1}]= {\rm
deg_{\widehat{\beta}}}[\Phi^{-1}]\geq  1,\nn\ee \be {\rm
Coef}[\Psi^{-1}\bullet\Phi^{-1}]=
 {\rm Coef}[\Phi^{-1}\bullet\Psi^{-1}]=  {\rm Coef}[\Phi^{-1}] {\rm Coef}[\Psi^{-1}]\in \PP.\nn\ee
Therefore, \be {\rm deg_{\widehat{\beta}}}[(\Psi\bullet\Phi)^{-1}]=
{\rm deg_{\widehat{\beta}}}[(\Phi\bullet\Psi)^{-1}]= {\rm
deg_{\widehat{\beta}}}[\Phi^{-1}]\geq  1,\nn\ee \be {\rm
Coef}[(\Psi\bullet\Phi)^{-1}]=
 {\rm Coef}[(\Phi\bullet\Psi)^{-1}]=  {\rm Coef}[\Phi^{-1}] {\rm Coef}[\Psi^{-1}]\in\PP.\nn\ee
So in this case, $ \Psi\bullet\Phi\in W_3$ and $\Phi\bullet\Psi \in
 W_3$.}\\

\item[]{4.\hspace{0.5cm}  For $\Psi$, $\Phi\in W_2$ we have,  \be {\Deg} [\Psi\bullet\Phi]=
{\rm deg_{\widehat{\beta}}}[\Phi\bullet\Psi]= {\rm
deg_{\widehat{\beta}}}[\Psi]+ {\Deg}[\Phi]\geq 2, \hspace{0.5cm}
{\rm Coef}[\Psi\bullet\Phi]=
 {\rm Coef}[\Phi\bullet\Psi]=  {\rm Coef}[\Phi] {\rm Coef}[\Psi]\in\PP.\nn\ee Thus, $ \Psi\bullet\Phi \in W_2$ and
$\Phi\bullet\Psi \in
 W_2$.\\

\item[]5.\hspace{0.5cm}  Let $\Psi \in W_2$ and $\Phi\in W_3$ with ${\Deg}[\Psi]=m$ and
${\Deg}[\Phi^{-1}]= n$  (where $m,n\geq1$).

(a) Assume $m>n$.

By assumption, $\Phi^{-1}=\Big(\big(\sum^{n}_{k=1} P_{k}(x)
\rd_{x}^{\,k}+P_{0}(x)\big)\,\widehat{\beta}+
 f(x,y)\,\widehat{\theta}\Big)$ where $P_{n}(x)\in\PP$. By Lemma \ref{hasan},
\be\Phi=\Big(\big(\sum^{n}_{k=1} P_{k}(x)
\rd_{x}^{\,k}+P_{0}(x)\big)\,\widehat{\beta}+
 f(x,y)\,\widehat{\theta}\Big)^{-1}=\widehat{\theta}^{\,
n}\bullet({1\over
P_{n}(x)})\,\widehat{\beta}+\,\widehat{\theta}\,^{\,
n}\bullet\,R(x,y)\,\widehat{\theta},\nn\ee where $
R(x,y)\,\widehat{\theta}=\Big(\sum_{r=1}^{\infty}\big(\sum_{k=0}^{n-1}
\,{-P_{k}(x)\over
P_{n}(x)}\widehat{\beta}\bullet(\,\widehat{\theta}\,)^{\,\,
{n-k}}\,+{-f(x,y)\over
P_{n}(x)}\,\widehat{\theta}\,\bullet(\,\widehat{\theta}\,)^{\,n}
\big)^{\,r}\Big)\bullet ({1\over P_{n}(x)})\,\widehat{\beta}$. Now
by Lemma \ref{salar2} and Lemma \ref{mahmood},

\be {\rm deg_{\widehat{\beta}}}[\Psi\bullet\big(\widehat{\theta}^{\,
n}\bullet({1\over P_{n}(x)}){\widehat{\beta}}\,\big)]= {\rm
deg_{\widehat{\beta}}}[\big(\widehat{\theta}^{\, n}\bullet({1\over
P_{n}(x)}){\widehat{\beta}}\big)\bullet\Psi]= m-n,
\label{karbas1}\nn\ee

\be {\rm
deg_{\widehat{\beta}}}[\Psi\bullet\big(\,\widehat{\theta}\,^{\,
n}\bullet\,R(x,y)\,\widehat{\theta}\big)]< m-n,\hspace{1cm} {\rm
deg_{\widehat{\beta}}}[\big(\,\widehat{\theta}\,^{\,
n}\bullet\,R(x,y)\,\widehat{\theta}\big)\bullet \Psi]<
m-n.\label{karbas2}\nn\ee Therefore from last two lines we deduce,
\be {\Deg}[\Psi\bullet\Phi]={\Deg}[\Phi\bullet\Psi]=m-n>0,\nn\ee \be
{\rm Coef}[\Psi\bullet\Phi]=
 {\rm Coef}[\Phi\bullet\Psi]={\Cof}[\Psi\bullet\big(\widehat{\theta}^{\,
n}\bullet({1\over P_{n}(x)}){\widehat{\beta}}\,\big)]=
{\Cof}[\big(\widehat{\theta}^{\, n}\bullet({1\over
P_{n}(x)}){\widehat{\beta}}\,\big)\bullet\Psi],\nn\ee  \be {\rm
Coef}[\Psi\bullet\Phi]=
 {\rm Coef}[\Phi\bullet\Psi]={1\over P_{n}(x)}{\rm Coef}[\Psi]\in \PP.\nn \ee
Thus $\Psi\bullet\Phi \in W_2$ and $\Phi\bullet\Psi \in \nn W_2$.\\

(b) Assume $m=n$ and use a similar procedure as before to get \be
{\Deg}[\Psi\bullet\Phi]={\Deg}[\Phi\bullet\Psi]=m-n=0,\hspace{0.75cm}
{\rm Coef}[\Psi\bullet\Phi]=
 {\rm Coef}[\Phi\bullet\Psi]={1\over P_{n}(x)}{\rm Coef}[\Psi]\in \PP.\nn \ee Hence $ \Psi\bullet\Phi \in W_1$ and $\Phi\bullet\Psi \in
 W_1$.\\

(c) Let $m<n$ and write

$\Phi^{-1}=\Big(\big(\sum^{n}_{k=1} P_{k}(x)
\rd_{x}^{\,k}+P_{0}(x)\big)\,\widehat{\beta}+
 g(x,y)\,\widehat{\theta}\Big)$ where $P_{n}(x)\in \PP$
and \\ $\Psi=\Big(\big(\sum^{m}_{k=1} Q_{k}(x)
\rd_{x}^{\,k}+Q_{0}(x)\big)\,\widehat{\beta}+
 f(x,y)\,\widehat{\theta}\Big)$ where $Q_{m}(x)\in \PP $. From
Lemma \ref{hasan}, \be \Psi^{-1}=\Big(\big(\sum^{m}_{k=1} Q_{k}(x)
\rd_{x}^{\,k}+Q_{0}(x)\big)\,\widehat{\beta}+
 f(x,y)\,\widehat{\theta}\Big)^{-1}=\widehat{\theta}^{\,
m}\bullet({1\over
Q_{m}(x)})\,\widehat{\beta}+\,\widehat{\theta}\,^{\,
m}\bullet\,R(x,y)\,\widehat{\theta},\nn\ee where $
R(x,y)\,\widehat{\theta}=\Big(\sum_{r=1}^{\infty}\big(\sum_{k=0}^{m-1}
\,{-Q_{k}(x)\over
Q_{m}(x)}\widehat{\beta}\bullet(\,\widehat{\theta}\,)^{\,\,
{m-k}}\,+{-f(x,y)\over
Q_{m}(x)}\,\widehat{\theta}\,\bullet(\,\widehat{\theta}\,)^{\,m}
\big)^{\,r}\Big)\bullet ({1\over Q_{m}(x)})\,\widehat{\beta}$. Thus,
\be {\rm
deg_{\widehat{\beta}}}[\Phi^{-1}\bullet\big(\widehat{\theta}^{\,
m}\bullet({1\over Q_{m}(x)}){\widehat{\beta}}\,\big)]= {\rm
deg_{\widehat{\beta}}}[\big(\widehat{\theta}^{\, m}\bullet({1\over
Q_{m}(x)}){\widehat{\beta}}\,\big)\bullet\Phi^{-1}]=
n-m,\label{esmat1}\nn\ee   \be {\rm
deg_{\widehat{\beta}}}[\Phi^{-1}\bullet\big(\,\widehat{\theta}\,^{\,
m}\bullet\,R(x,y)\,\widehat{\theta}\big)]< n-m,\hspace{1cm} {\rm
deg_{\widehat{\beta}}}[\big(\,\widehat{\theta}\,^{\,
m}\bullet\,R(x,y)\,\widehat{\theta}\big)\bullet \Phi^{-1}]<
n-m.\label{esmat2}\nn\ee From last two lines one can deduce, $
{\Deg}[\Psi^{-1}\bullet\Phi^{-1}]={\Deg}[\Phi^{-1}\bullet\Psi^{-1}]=n-m,$
thus, \be
{\Deg}[(\Psi\bullet\Phi)^{-1}]={\Deg}[(\Phi\bullet\Psi)^{-1}]=n-m>0.\nn\ee
Also, $ {\rm Coef}[\Psi^{-1}\bullet\Phi^{-1}]=
 {\rm Coef}[\Phi^{-1}\bullet\Psi^{-1}]={1\over Q_{m}(x)}{\rm
 Coef}[\Phi^{-1}]\in\PP,$ therefore,
 \be {\rm Coef}[(\Psi\bullet\Phi)^{-1}]=
 {\rm Coef}[(\Phi\bullet\Psi)^{-1}]={1\over Q_{m}(x)}{\rm Coef}[\Phi^{-1}]\in\PP. \nn\ee
It follows that $\Psi\bullet\Phi \in W_3$ and $\Phi\bullet\Psi \in
 W_3.$}

\item[]{6.\hspace{0.5cm}  For $\Psi$, $\Phi\in W_3$ we have $\Psi^{-1}$ and $\Phi^{-1}$ in
$W_2$, hence we can write, \be {\Deg} [\Psi^{-1}\bullet\Phi^{-1}]=
{\rm deg_{\widehat{\beta}}}[\Phi^{-1}\bullet\Psi^{-1}]= {\rm
deg_{\widehat{\beta}}}[\Psi^{-1}]+ {\Deg}[\Phi^{-1}]\geq 2, \nn\ee
\be {\Deg} [(\Psi\bullet\Phi)^{-1}]= {\rm
deg_{\widehat{\beta}}}[(\Phi\bullet\Psi)^{-1}]\geq 2,\nn \ee \be
{\rm Coef}[\Psi^{-1}\bullet\Phi^{-1}]=
 {\rm Coef}[\Phi^{-1}\bullet\Psi^{-1}]=  {\rm Coef}[\Phi^{-1}] {\rm Coef}[\Psi^{-1}]\in\PP,\nn\ee
 \be {\rm
Coef}[(\Psi\bullet\Phi)^{-1}]=
 {\rm Coef}[(\Phi\bullet\Psi)^{-1}]=  {\rm Coef}[\Phi^{-1}] {\rm Coef}[\Psi^{-1}]\in\PP.\nn\ee
Therefore in this case, $\Psi\bullet\Phi \in W_3$ and
$\Phi\bullet\Psi \in
 W_3. \nn$}\qed}
 \end{itemize}

We notice that elements in $W_3$ are in the form
$f(x,y)\widehat\theta$,
so we state the following lemma for these elements. First state the following condition.\\

{\bf Condition A1}, {We say $f(x,y)\in {\boldsymbol{B_2}}$ satisfies
condition $A1$ by order $n$ if
$\rd^{\,n-1}_{x}f(x,y)|_{\,x=y}=h(x)$, $h(x)\in\PP$, and (if $n\geq
2$) $\rd^{\,i}_{x}f(x,y)|_{\,x=y}=0$ for $i=0,1,...,n-2$.\\}

\begin{lem}\label{ghasem}{$f(x,y)\,\widehat\theta \in W_3$ with ${\Deg}
(f(x,y)\,\widehat\theta)^{-1}=n$ if and only if $f(x,y)$ satisfies
condition A1 by order $n$.\\}
\end{lem}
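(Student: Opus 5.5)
Both directions rest on one computation: multiply $f(x,y)\widehat\theta$ on the left by a differential operator and read off the $\widehat\beta$-part. Iterating the rule $\rd_{x}\widehat\beta\bullet f\widehat\theta=(\rd_x f)\widehat\theta+f(x,x)\widehat\beta$ from Proposition~\ref{hesam} gives by induction
\be
\rd^{\,k}_{x}\widehat\beta\bullet f(x,y)\widehat\theta=(\rd^{\,k}_{x}f)(x,y)\widehat\theta+\sum_{j=0}^{k-1}\rd^{\,k-1-j}_{x}\big(c_{j}(x)\big)\,\rd^{\,j}_{x}\widehat\beta,\qquad c_{j}(x):=\rd^{\,j}_{x}f(x,y)|_{y=x}.\nn
\ee
Here $c_j(x)$ means: differentiate $j$ times in $x$, then set $y=x$. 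The derivative $\rd^{\,k-1-j}_x$ acts on $c_j$ as a function of $x$, as a left multiplier in front of $\rd^{\,j}_x\widehat\beta$ after commuting via the last rule of Proposition~\ref{hesam}. In particular the coefficient of $\rd^{\,k-1}_x\widehat\beta$ is $c_0(x)=f(x,x)$.

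\emph{Sufficiency.} Assume A1 by order $n$, so $c_0=\dots=c_{n-2}=0$ and $c_{n-1}=h\in\PP$. Apply the formula with $k=n$. Every term involving $c_j$ with $j\le n-2$ vanishes, since derivatives of the zero function are zero. What remains is
\be
\rd^{\,n}_{x}\widehat\beta\bullet f\widehat\theta=h(x)\widehat\beta+(\rd^{\,n}_{x}f)\widehat\theta,\nn
\ee
which is a type~2 element of Lemma~\ref{hasan}, hence invertible; call it $\Lambda$. Since $\rd^{\,n}_x\widehat\beta$ is invertible with inverse $\widehat\theta^{\,n}$ (Remark~\ref{sublm}), we get $f\widehat\theta=\widehat\theta^{\,n}\bullet\Lambda$, so $f\widehat\theta$ is invertible and $(f\widehat\theta)^{-1}=\Lambda^{-1}\bullet\rd^{\,n}_x\widehat\beta$. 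Now $\Lambda^{-1}\in W_1$ with $\mathrm{Coef}[\Lambda^{-1}]=1/h$, and $\rd^{\,n}_x\widehat\beta\in W_2$, so Lemma~\ref{salar2}(1) gives degree $n$ and coefficient $1/h\in\PP$. Hence $f\widehat\theta\in W_3$ with $\mathrm{deg}_{\widehat\beta}[(f\widehat\theta)^{-1}]=n$.

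\emph{Necessity.} Let $\Phi:=(f\widehat\theta)^{-1}=\mathcal Q_n(x,\rd_x)\widehat\beta+g\widehat\theta$ with leading coefficient $P_n\in\PP$, so $\widehat\beta=\Phi\bullet f\widehat\theta$. Expand $\mathcal Q_n\widehat\beta\bullet f\widehat\theta$ with the formula above, summed against the coefficients $P_k$; the part $g\widehat\theta\bullet f\widehat\theta$ lies in $\mathcal G$. The $\widehat\beta$-part must equal $1$, by the uniqueness of Corollary~\ref{kor}.
\begin{itemize}
\item The top power $\rd^{\,n-1}_x$ appears only through $k=n$, with coefficient $P_n c_0$. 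Since $P_n$ is nowhere zero, $c_0\equiv0$.
\item Feeding this back, the coefficient of $\rd^{\,n-2}_x$ becomes $P_n c_1$ plus terms built from derivatives of $c_0$, which now vanish. Hence $c_1\equiv 0$.
\item Continue downward by induction, obtaining $c_0=\dots=c_{n-2}\equiv0$.
\item The constant term then reduces to $P_n c_{n-1}=1$, so $c_{n-1}=1/P_n\in\PP$.
\end{itemize}
This is exactly condition A1 by order $n$.

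The main obstacle is the bookkeeping in this triangular induction. One must show that at stage $j$ all contaminating terms from the lower coefficients $P_k$ ($k<n$) involve only the $c_i$ with $i<j$ and their derivatives. I would handle it by induction on $j$ using the explicit formula, noting that $P_k\rd^{\,k}_x$ contributes only to powers $\le k-1$.
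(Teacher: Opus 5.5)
Your proof is essentially the paper's. Sufficiency uses the same factorization $f\widehat\theta=\widehat\theta^{\,n}\bullet\big(h\,\widehat\beta+(\rd^{\,n}_x f)\widehat\theta\big)$, and you read off the degree and coefficient of the inverse via Lemma~\ref{salar2}, which packages the paper's use of Lemma~\ref{mahmood}. Necessity is the same triangular comparison of $\widehat\beta$-coefficients.

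One correction: your displayed formula is mis-indexed. Read literally, for $k=2$ it gives the $\widehat\beta$-part $c_0'+c_1\rd_x$, but a direct computation gives $c_0\rd_x+c_0'+c_1$. It also contradicts your own remark that the coefficient of $\rd^{\,k-1}_x$ is $c_0$, since the display puts $c_{k-1}$ there. The correct statement is
\[
\rd^{\,k}_x\widehat\beta\bullet f\widehat\theta=(\rd^{\,k}_x f)\widehat\theta+\sum_{j=0}^{k-1}\rd^{\,k-1-j}_x\widehat\beta\bullet c_j\widehat\beta
=(\rd^{\,k}_x f)\widehat\theta+\sum_{j=0}^{k-1}\sum_{m=0}^{k-1-j}\binom{k-1-j}{m}(\rd^{\,m}_x c_j)\,\rd^{\,k-1-j-m}_x\widehat\beta,
\]
which is the paper's (\ref{mehdi1}). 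Everything you use afterwards holds for this corrected formula:
\begin{itemize}
\item the leading coefficient is $c_0$;
\item only $h\widehat\beta$ survives under A1;
\item the $\rd^{\,n-1-j}_x$-coefficient is affected only through $c_i$ with $i<j$.
\end{itemize}
So the argument goes through once the display is fixed.

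The only difference in route is in necessity. The paper first uses the explicit inverse from Lemma~\ref{hasan} to write $f\widehat\theta=\widehat\theta^{\,n}\bullet\frac{1}{P_n}\widehat\beta+\widehat\theta^{\,n}\bullet g\widehat\theta$. Hence $\pi_{\widehat\beta}[\rd^{\,n}_x\widehat\beta\bullet f\widehat\theta]=\frac{1}{P_n}\widehat\beta$, so only the single operator $\rd^{\,n}_x$ has to be expanded and the lower coefficients $P_k$ never enter. You instead multiply by the whole inverse $\Phi$. This needs only $\Phi\bullet f\widehat\theta=\widehat\beta$, not the shape of the inverse, but costs the extra bookkeeping. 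That bookkeeping is sound, since $P_k\widehat\beta\bullet\rd^{\,k-1-i}_x\widehat\beta\bullet c_i\widehat\beta$ has degree at most $k-1-i$. For $n=1$ your first bullet does not apply, because the $\rd^{\,0}_x$-coefficient must equal $1$; go straight to the last bullet.
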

{\bf Proof.} {For $f(x,y)\,\widehat\theta\in W_3$ with ${\Deg}
(f(x,y)\,\widehat\theta)^{-1}=n$, by definition there exist
${\mathcal{O}}(x,\rd_{x})\,\widehat{\beta}=\big(\sum^{n}_{i=1}
P_{i}(x) \,\rd_{x}^{\,i}+P_{0}(x)\big)\,\widehat{\beta}$ (where
$P_{n}(x)\in \PP$) and $R(x,y)\widehat\theta$ such that, \bea
(f(x,y)\,\widehat\theta)&=&\Big(\big(\sum^{n}_{k=1} P_{k}(x)
\rd_{x}^{\,k}+P_{0}(x)\big)\,\widehat{\beta}+
 R(x,y)\,\widehat{\theta}\Big)^{-1}\nn\\&=&\widehat{\theta}^{\,
n}\bullet({1\over
P_{n}(x)})\,\widehat{\beta}+\,\widehat{\theta}\,^{\,
n}\bullet\,g(x,y)\,\widehat{\theta}.\label{khoda11}\nn\eea where $
g(x,y)\,\widehat{\theta}=\Big(\sum_{r=1}^{\infty}\big(\sum_{k=0}^{n-1}
\,{-P_{k}(x)\over
P_{n}(x)}\widehat{\beta}\bullet(\,\widehat{\theta}\,)^{\,\,
{n-k}}\,+{-R(x,y)\over
P_{n}(x)}\,\widehat{\theta}\,\bullet(\,\widehat{\theta}\,)^{\,n}
\big)^{\,r}\Big)\bullet ({1\over P_{n}(x)})\,\widehat{\beta}.$ From
last line we have,

\be  {\pi}_{\widehat\beta} [\rd_{x}^{\,n}{\,\widehat\beta} \bullet
f(x,y)\,\widehat\theta\,\,]={1\over
P_{n}(x)}{\,\widehat\beta}\hspace{0.2cm};\hspace{0.3cm}  {1\over
P_{n}(x)}\in \PP.\label{amini}\ee

The following relation can be proved by induction. \be
\rd^{\,n}_{x}{\,\widehat\beta}\bullet f(x,y)\,\widehat\theta\, =
\sum^{n-1}_{i=0} \sum^{i}_{k=0} {\tbinom ik} \Big(\rd^{\,k}_{x} \big
(\rd^{\,n-i-1}_{x}
f(x,y)|_{x=y}\big)\Big)\rd^{\,i-k}_{x}{\,\widehat\beta} +
(\rd^{\,n}_{x} f(x,y))\,\widehat\theta,\label{mehdi1}\ee

(In (\ref{mehdi1}) we take $\rd^{\,0}_{x}\equiv 1$). Inserting
$\rd^{\,n}_{x}{\,\widehat\beta}\bullet f(x,y)\,\widehat\theta\,$
from (\ref{mehdi1}) into equation (\ref{amini}) one can easily
conclude that $f(x,y)$ satisfies condition $A1$ by order $n$.\\

On the other hand let $f(x,y)\in \boldsymbol B_2$ and suppose it
satisfies condition A1 by order $n$. Thus by $n$ times applying
$\rd_{x}\,\widehat\beta$
 to $f(x,y)\,\widehat\theta$ we will have,
$\rd_{x}^{n}\,\widehat\beta\bullet
f(x,y)\,\widehat\theta=h(x)\,\widehat\beta+(\rd^{\,n}_{x}f(x,y))\,\widehat\theta
$. Hence \be f(x,y)\widehat\theta\,\, =\,\, (\,\widehat\theta\,)^{\,
n}\,\bullet h(x)\,\widehat\beta+ (\,\widehat\theta\,)^{\, n}\bullet
(\rd^{\,n}_{x}f(x,y))\,\widehat\theta \,\,=\,\,
(\,\widehat\theta\,)^{\, n}\,\bullet h(x)\,\widehat\beta\bullet
\Big(\,\widehat\beta \,+\, ({\rd^{\,n}_{x}f(x,y)\over
h(x)})\,\widehat\theta\,\Big). \nn\ee Therefore,
\bea(f(x,y)\,\widehat\theta\,)^{-\,
1}&=&\,(\,\widehat\beta+g(x,y)\widehat{\theta})\bullet {1\over
h(x)}\,\widehat\beta\bullet\rd_{x}^{n}\,\widehat\beta\nn\\&=&{1\over
h(x)}\rd_{x}^{n}\,\widehat\beta+(g(x,y)\widehat\theta)\bullet
{1\over
h(x)}\,\widehat\beta\bullet\rd_{x}^{n}\,\widehat\beta\label{moses},\eea
where $g(x,y)\widehat{\theta} =\sum^{\infty}_{i=1}
({-\rd^{\,n}_{x}f(x,y)\over h(x)}\,\widehat\theta\,)^{\,i}$. For the
second term in (\ref{moses}) by lemma (\ref{mahmood}) we have,\be
{\rm deg}_{\widehat\beta}[g(x,y)\widehat\theta\bullet {1\over
h(x)}\,\widehat\beta\bullet\rd_{x}^{n}\,\widehat\beta]<n.\label{moses2}\nn\ee
From (\ref{moses}) and last line we have
${\Deg}[(f(x,y)\,\widehat\theta\,)^{-\, 1}]=n, \hspace{0.4cm}
{\Cof}[(f(x,y)\,\widehat\theta\,)^{-\, 1}]={1\over h(x)}\in\PP$.
Thus by definition $(f(x,y)\,\widehat\theta\,)\in W_3$. }\qed\\

\begin{thm}\label{yarali}{ $\mathcal U$ is the whole group of units of the algebra $\mIs$}
\end{thm}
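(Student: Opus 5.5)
The plan is to take an arbitrary unit $\Psi\in\mIs$ with inverse $\Phi$, so that $\Psi\bullet\Phi=\Phi\bullet\Psi=\widehat\beta$, and show that $\Psi\in W_1\cup W_2\cup W_3$. Proposition \ref{salar} identifies this union with $\mathcal U$, so this proves the theorem. Write
\[
\Psi=\mathcal{A}\,\widehat\beta+f\,\widehat\theta,\qquad \Phi=\mathcal{B}\,\widehat\beta+g\,\widehat\theta,
\]
with ${\Deg}[\Psi]=m$ and ${\Deg}[\Phi]=k$. The basic tool is to apply the projection $\pi_{\widehat\beta}$ to $\Psi\bullet\Phi=\widehat\beta$ and compare degrees. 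By Lemma \ref{mahmood}, $\pi_{\widehat\beta}[\mathcal A\widehat\beta\bullet g\widehat\theta]$ has degree $<m$ and $\pi_{\widehat\beta}[f\widehat\theta\bullet\mathcal B\widehat\beta]$ has degree $<k$. Also $f\widehat\theta\bullet g\widehat\theta$ lies in $S_4$. So the only contribution of degree $m+k$ comes from $\mathcal A\widehat\beta\bullet\mathcal B\widehat\beta$, and its coefficient is ${\rm Coef}[\Psi]\,{\rm Coef}[\Phi]$.

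First I would handle the case where $m\ge1$ or $k\ge1$. By symmetry between $\Psi$ and $\Phi$, assume $m\ge1$. The top-degree comparison gives ${\rm Coef}[\Psi]{\rm Coef}[\Phi]\equiv0$, which is only a pointwise identity. The heart of the proof is to upgrade this to ${\rm Coef}[\Psi]\in\PP$ and $k=0$; then $\Psi\in W_2$ and $\Phi\in W_3$.

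\begin{itemize}
\item \emph{Step 1: where $P_m\ne0$.} On any subinterval $[r_1,r_2]$ on which $P_m={\rm Coef}[\Psi]$ does not vanish, I would use the localization device from the proof of Theorem \ref{emam}. Restricting to $M[r_1,r_2]$ makes $\Psi$ a type~3 element, hence invertible there by Lemma \ref{hasan}. By uniqueness of inverses and Corollary \ref{kor}, $\Phi$ restricted to $M[r_1,r_2]$ has the explicit form of Lemma \ref{hasan}: pure $\widehat\theta$-part and vanishing $\widehat\beta$-part.
\item \emph{Step 2: where $P_m$ might vanish.} Suppose $P_m(x_0)=0$. I would derive a contradiction from surjectivity of $\Psi$ on $M[a,b]$. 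I would take $\phi=\Phi\cdot\psi_{c,d}$ for bump functions $\psi_{c,d}$ concentrated near $x_0$ and evaluate the identity $\Psi\cdot\phi=\psi_{c,d}$ at $x_0$. The highest derivative drops out there, and the same pairing trick as in Lemma \ref{jafar} would force $\psi_{c,d}$ to satisfy a nontrivial linear constraint at $x_0$. This contradicts the freedom in choosing $\psi_{c,d}$.
\end{itemize}

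Once ${\rm Coef}[\Psi]\in\PP$ is known, $\Psi$ is a type~3 generator, so $\Psi\in W_2$ and, by Definition \ref{albert}, $\Phi\in W_3$. The degree comparison then forces $k=0$ and ${\rm Coef}[\Phi]\equiv0$, which is consistent with $\Phi$ being a pure $\widehat\theta$ element.

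Next comes the case $m=k=0$, so $\Psi=P_0\widehat\beta+f\widehat\theta$ and $\Phi=Q_0\widehat\beta+g\widehat\theta$. Projecting $\Psi\bullet\Phi=\widehat\beta$ onto $\widehat\beta$ gives $P_0Q_0\equiv1$, so $P_0\in\PP$ and $\Psi\in W_1$. There is one remaining subcase: $P_0\equiv0$, so $\Psi=f\widehat\theta$, and $\Phi$ also has degree $0$. Then $\pi_{\widehat\beta}[\Psi\bullet\Phi]=0\ne\widehat\beta$, which is impossible. Hence in a unit with $\Psi=f\widehat\theta$ the inverse must have positive degree, and that situation was already settled in the previous case with the roles of $\Psi$ and $\Phi$ swapped; this puts $\Psi$ in $W_3$ (compare Lemma \ref{ghasem}).

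The main obstacle is Step 2 of the first case: showing that the leading coefficient of a unit of positive degree has no zeros. The degree bookkeeping only gives ${\rm Coef}[\Psi]{\rm Coef}[\Phi]\equiv0$, and smooth functions can have disjoint supports, so a purely algebraic argument will not suffice. If the bump-function surjectivity argument proves awkward, my fallback is to work with the symbol identity on each maximal open interval where $P_m\ne0$. There $\Phi$ is forced to have the explicit Lemma \ref{hasan} form, with a factor $1/P_m$. I would then show that its smooth kernel $g$ cannot extend smoothly to a zero of $P_m$, using condition A1: on the interval, $\rd_x^{\,m-1}g|_{x=y}=1/P_m(x)$, which blows up at the endpoint.
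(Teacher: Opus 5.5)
Your case structure works: the degree-$0$ case follows from $P_0Q_0\equiv1$, and the positive-degree case is treated for whichever of $\Psi,\Phi$ has positive degree. But the argument you list first as Step~2 does not work. Because $\Psi\bullet\Phi=\widehat\beta$, the identity $\Psi\cdot(\Phi\cdot\psi)=\psi$ holds for every $\psi$, so evaluating it at $x_0$ is automatically true. The vanishing of $P_m(x_0)$ only removes one term, no constraint on $\psi$ at $x_0$ comes out, and there is nothing for the Lemma~\ref{jafar} pairing to act on.

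Your fallback, however, is a complete proof and should be the main argument. What makes Step~1 rigorous is that restricting all coefficients and kernels to $[r_1,r_2]$ is a unital algebra homomorphism from $\mIs$ to its analogue over $[r_1,r_2]$. Every multiplication rule is local, and the composition integral $\int_y^x$ stays inside $[r_1,r_2]$ whenever $r_1\le y\le x\le r_2$. This is cleaner than restricting operators on $M$, which would require extending elements of $M[r_1,r_2]$ across $r_2$. Hence the restriction of $\Phi$ is the two-sided inverse of the restriction of $\Psi$, which is a type-3 element there. By Lemma~\ref{hasan}, uniqueness of the normal form and Lemma~\ref{ghasem}, the kernel $g$ of $\Phi$ satisfies $\rd_x^{\,m-1}g(x,y)|_{y=x}=1/P_m(x)$ on the open set $U=\{P_m\neq0\}$. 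Since $P_m\not\equiv0$, if $P_m$ had a zero then some component of $U$ would have an endpoint $x_0\in[a,b]$ with $P_m(x_0)=0$. The continuous function $x\mapsto\rd_x^{\,m-1}g(x,y)|_{y=x}$ would then be unbounded near $x_0$, a contradiction. So $\Psi\in W_2$ and $\Phi\in W_3$, and together with the symmetric and degree-$0$ cases every unit lies in $W_1\cup W_2\cup W_3=\mathcal U$.

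The paper argues by contraposition. It splits the complement of $W_1\cup W_2\cup W_3$ into $W'_1,W'_2,W'_3$ and shows that no element of these is invertible:
\begin{itemize}
\item $W'_2$ reduces to $W'_1$ by factoring off $\rd_x^{\,n}\widehat\beta$.
\item $W'_3$ is handled with Lemma~\ref{ghasem}: either factor out $\widehat\theta^{\,n}$ to land in $W'_1$, or note that $\pi_{\widehat\beta}$ of every left multiple vanishes.
\item $W'_1$ is split by whether its coefficient $Q$ vanishes on a subinterval. If not, multiplicativity of the coefficient (Lemma~\ref{salar2}) gives the contradiction. If so, a descending comparison of coefficients in $\Phi\bullet\Psi=\widehat\beta$ forces $P_0=1/Q$ on $\{Q\neq0\}$, contradicting continuity of $P_0$.
\end{itemize}

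Both proofs rest on the same analytic fact: for continuous $Q\not\equiv0$ with a zero, $1/Q$ on $\{Q\neq0\}$ cannot agree with a continuous function on $[a,b]$. The paper applies it to the $\widehat\beta$-coefficient of a putative inverse of a degree-$0$ element, staying inside one algebra and using only coefficient comparison. You apply it to the diagonal derivative of the inverse's kernel after localizing. Your version is more direct: one analytic step, no interior/no-interior split, and no separate treatment of pure $\widehat\theta$ elements, since those are covered by swapping $\Psi$ and $\Phi$. The price is the localization homomorphism and the explicit inverse formula on subintervals.
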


{\bf Proof.} Consider the following sets which give a partition of
$\mIs$:\bea{\mIs}_1=\{\Psi\in \mIs\,|\,\,\, {\rm
deg_{\widehat{\beta}}}[\Psi]=0;{\rm Coef[\Psi]} \not\equiv0 \},\nn\\
{\mIs}_2=\{\Psi\in \mIs\,|\,\,\, {\rm
deg_{\widehat{\beta}}}[\Psi]\ge1;{\rm Coef[\Psi]} \not\equiv0 \},
\nn\\ {\mIs}_3=\{\Psi\in \mIs\,|\,\,\, {\rm
deg_{\widehat{\beta}}}[\Psi]=0; {\rm Coef[\Psi]}=0 \}.\nn\eea
Clearly $ W_1 \subset\mIs_1, \hspace{0.1cm} W_2 \subset\mIs_2\, $
and $W_3 \subset\mIs_3.$ Put
$\boldsymbol{B{'}}=\boldsymbol{{B}_1}-\PP$ and consider the
following sets, \be {W'}_{1}=\mIs_{1}-W_1=\{\Psi\in \mIs\,|\,\,\,
{\rm deg_{\widehat{\beta}}}[\Psi]=0; {\rm Coef[\Psi]} \not\equiv0;
{\rm Coef[\Psi]}\in \boldsymbol{B{'}} \},\nn \ee
\be{W'}_{2}={\mIs_{2}}-W_2=\{\Psi\in \mIs\,|\,\,\, {\rm
deg_{\widehat{\beta}}}[\Psi]\ge1; {\rm Coef[\Psi]} \not\equiv0; {\rm
Coef[\Psi]}\in \boldsymbol{B{'}}\}, \nn\ee \be
{W'}_{3}={\mIs}_3-W_3.\nn\ee So in order to prove the theorem we
shall prove that no element of ${W'}_{1}\cup {W'}_{2} \cup
{W'}_{3}$ is invertible. \\

1-Assume $\Psi\in {W'}_{1}$ and consider the following cases. \\

a)\,\,Suppose  there is no subinterval $[c,d]\subset[a,b]$ on which
$\Cof[\Psi]$ is identically zero. Then its zero set has empty
interior; hence $\Cof[\Psi]P\not\equiv0$ for every
$P\in\boldsymbol B_1$ with $P\not\equiv0$. Thus by Lemma
\ref{salar2} the product of $\Psi$ with an element in $\mIs_1$ is in
$\mIs_1$; the product of $\Psi$ with an element of $\mIs_2$ is in
$\mIs_2$ and the product of $\Psi$ with an element of $\mIs_3$ is in
$\mIs_3$. Considering the fact that  the identity $\widehat{\beta}$
is in $\mIs_1$, if $\Phi$ happens to be the inverse of $\Psi$ we
should have $\Phi\in\mIs_1$ and therefore by Lemma \ref{salar2} \be
{\pi}_{\widehat{\beta}}[\Psi\bullet\Phi]=({\Cof}[\Psi\bullet\Phi])\widehat{\beta}=\widehat{\beta},
\hspace{1.5cm} {\Cof}[\Psi\bullet\Phi]=({\Cof}\Psi) ({\Cof}\Phi)=1.
\label{nima}\nn\ee But this is a contradiction since
$({\Cof}\Psi)\in \boldsymbol {B'}$.\\

b) Suppose there is a subinterval $[c,d]\subset[a,b]$ on which
$\Cof[\Psi]$ is zero. An inverse $\Phi$ cannot belong to $\mIs_3$,
since then ${\pi}_{\widehat\beta}[\Phi\bullet\Psi]=0$, nor to
$\mIs_1$, since its $\widehat\beta$-coefficient would vanish on
$[c,d]$. Hence assume
$\Psi=Q(x)\widehat{\beta}+g(x,y)\,\widehat{\theta}$ and
$\Phi=\big(\sum^{n}_{k=1} P_{k}(x)
\rd_{x}^{\,k}+P_{0}(x)\big)\,\widehat{\beta}+
 f(x,y)\,\widehat{\theta}\in\mIs_2$ with
$\Phi\bullet\Psi=\widehat\beta$. Using relation (\ref{mehdi1}), we should have, \bea &&\sum^{n}_{i=1}
\sum^{i}_{k=0} {\tbinom ik} \,P_{i}(x)\,\big(\rd^{\,k}_{x}
Q(x)\big)\,\rd^{\,i-k}_{x}\,+\,P_{0}(x)
\,Q(x)\nn\\&&+\sum^{n}_{n'=1}P_{n'}(x)\sum^{n'-1}_{i=0}
\sum^{i}_{k=0} {\tbinom ik} \Big(\rd^{\,k}_{x} \big
(\rd^{\,n'-i-1}_{x}
g(x,y)|_{x=y}\big)\Big)\rd^{\,i-k}_{x}\,\,=1.\label{zaki}\nn\eea

Comparing successively the coefficients of $\rd^{\,n}_{x}$,
$\rd^{\,n-1}_{x}$,...,$\rd_{x}$ gives, by descending induction,
$Q(x)P_{i}(x)\equiv0$ for $i=1,..,n$. Multiplying the above equation
by $Q(x)$ then gives $P_{0}(x)Q^{2}(x)=Q(x)$. Hence
$P_0=1/Q$ wherever $Q\ne0$; since $Q$ vanishes on a nonempty
interval but $Q\not\equiv0$, this contradicts the continuity of
$P_0$. Thus $\Psi$ is not invertible.\\

2-Let $\Psi\in {W'}_{2}$, then $\Psi=\big(\sum^{n}_{k=1} P_{k}(x)
\rd_{x}^{\,k}+P_{0}(x)\big)\,\widehat{\beta}+
 f(x,y)\,\widehat{\theta}$ where $P_{n}(x)\in \boldsymbol{B'}$. So

\be\Psi=\,\big( P_{n}(x)\,\widehat{\beta}-g(x,y)\,\widehat{\theta}
\big) \,\bullet
\,{\rd^{\,n}_{x}}\,\widehat{\beta},\label{nima2}\nn\ee where
$g(x,y)\,\widehat{\theta}= -\sum_{k=0}^{n-1}
\,{P_{k}(x)}\widehat{\beta}\bullet(\,\widehat{\theta}\,)^{\,
{n-k}}\,-{f(x,y)}\,\widehat{\theta}\,\,\bullet\,(\,\widehat{\theta}\,)^{n}$.
By part 1, $\big( P_{n}(x)\,\widehat{\beta}-g(x,y)\,\widehat{\theta}
\big)\in {W'}_{1}$, hence it is not invertible. Therefore $\Psi$ can
not be invertible, being a product of invertible element
$\,{\rd^{\,n}_{x}}\,\widehat{\beta}$ with the non-invertible element
$\big( P_{n}(x)\,\widehat{\beta}-g(x,y)\,\widehat{\theta} \big)$.
 \\

3-Let $\Psi=f(x,y)\widehat{\theta}\in {W'}_{3}$. By Lemma
(\ref{ghasem}) there can be only two cases
for $f(x,y)\in \boldsymbol{{B}_2}$ as following.\\

a) There exists $n$ such that
$\rd^{\,n-1}_{x}f(x,y)|_{\,x=y}=h(x)\in \boldsymbol{B'}$, and (if
$n\geq 2$) $\rd^{\,i}_{x}f(x,y)|_{\,x=y}=0$ for $i=0,1,...,n-2$. We
then have $(\rd_{x}^{\,n}\,\widehat{\beta})\bullet
f(x,y)\,\widehat{\theta}=(h(x)\,\widehat{\beta}-g(x,y)\,\widehat{\theta})$,
thus $\Psi=
(\,\widehat{\theta}\,)^{n}\bullet\big(h(x)\,\widehat{\beta}-g(x,y)\,\widehat{\theta}\big).$
Therefore $\Psi$ cannot be
invertible.\\

b) $f(x,x)=0$\, and\, $\rd^{\,n}_{x}f(x,y)|_{\,x=y}=0$ for all $\,n \in \mathbb{N}$.\\

In this case we have, $\forall \,k\in
\mathbb{N},\hspace{0.25cm}(\rd_{x}^{\,k}\,\widehat{\beta})\bullet
f(x,y)\,\widehat{\theta}=\big(\rd^{k}_{x}f(x,y)\big)\,\widehat{\theta}$,
and thus for any general element $\Phi=\big((\sum^{n}_{k=1} P_{k}(x)
\rd_{x}^{\,k}+P_{0}(x))\,\widehat{\beta}+
 R(x,y)\,\widehat{\theta}\,\big)\in \mIs$ we have,

\be\Phi\bullet\Psi=\big((\sum^{n}_{k=1} P_{k}(x)
\rd_{x}^{\,k}+P_{0}(x))\,\widehat{\beta}+
 R(x,y)\,\widehat{\theta}\,\big)\bullet f(x,y)\,\widehat{\theta}=g(x,y)\,\widehat{\theta},\label{nima4}\nn\ee where $g(x,y)=\big(\sum^{n}_{k=1} P_{k}(x) \rd_{x}^{\,k}
f(x,y)+P_{0}(x) f(x,y)\big)+
 \int^{x}_{y} dz\,R(x,z) f(z,y)$. From above we obviously
 have, \be {{\pi}_{\widehat{\beta}}}[\Phi\bullet
f(x,y)\,\widehat{\theta}]=0.\nn\ee Thus, $\nexists \Phi\in\mIs$ with
$\Phi\bullet f(x,y)\,\widehat{\theta}=\widehat{\beta}.$ Therefor the
element $\Psi=f(x,y)\,\widehat{\theta}$ is not
invertible in $\mIs$. \qed\\

We finish this section with the following corollaries which have
application in the solution of Volterra integral equation of first
kind.

\begin{cor}\label{koloshani2}{Consider $k(x,y)\,\widehat{\theta}$ and ${\mathcal{Q}_{\,n}}(x,\rd_{x})\,\widehat{\beta}$ with
${\Deg}[{\mathcal{Q}_{\,n}}(x,\rd_{x})\,\widehat{\beta}]=n$. If \be
\Big(\sum_{k=1}^{\infty} (k(x,y)\widehat{\theta})^{\, k}\Big)\bullet
{\mathcal{Q}_{\,n}}(x,\rd_{x})\,\widehat{\beta}=\mathcal{O}(x,\rd_{x})\,\widehat{\beta}+
R(x,y)\,\widehat{\theta},\nn\ee\\
then for $\displaystyle\Phi=\sum_{k=1}^{n}
\big((k(x,y)\widehat{\theta})^{\, k} \bullet
{{\mathcal{Q}_{\,n}}}(x,\rd_{x})\,\widehat{\beta}\big)$ we have $
\mathcal{O}(x,\rd_{x}) \,\widehat{\beta} =
{\pi}_{\,\widehat{\beta}}[\Phi].$\\
Furthermore assuming $\displaystyle
C(x,y)\,\widehat{\theta}={\pi}_{\,\widehat{\theta}}[\Phi]$ and \bea
S(x)\,\widehat{\beta}+D(x,y)\,\widehat{\theta}=(k(x,y)\widehat{\theta})^{\,
n}\bullet
{\mathcal{Q}_{\,n}}(x,\rd_{x})\,\widehat{\beta},\hspace{1cm}
H(x,y)\,\widehat{\theta}=\big(\sum_{r=1}^{\infty}
(k(x,y)\widehat{\theta})^{\, r}\big), \nn\eea we have: \be
R(x,y)\widehat{\theta} =
C(x,y)\widehat{\theta}+(S(y)H(x,y))\,\widehat{\theta}+H(x,y)\,\widehat{\theta}\bullet
D(x,y)\,\widehat{\theta}.\nn\ee\\}
\end{cor}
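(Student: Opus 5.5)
Split the series at $k=n$ and set $\Phi_{\infty}=\sum_{k=n+1}^{\infty}(k(x,y)\widehat{\theta})^{k}\bullet\mathcal{Q}_{n}(x,\rd_{x})\widehat{\beta}$, so that
\[
\Big(\sum_{k=1}^{\infty}(k(x,y)\widehat{\theta})^{k}\Big)\bullet\mathcal{Q}_{n}(x,\rd_{x})\widehat{\beta}=\Phi+\Phi_{\infty},
\]
with the sum converging as in Proposition \ref{sim} and Remark \ref{zari}. Both claims then follow by computing $\Phi_\infty$ separately.

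The first step is to show that $\Phi_{\infty}\in S_4$, i.e. ${\pi}_{\widehat{\beta}}[\Phi_{\infty}]=0$. For $k\ge n+1$ write $(k\widehat{\theta})^{k}=(k\widehat{\theta})^{k-n}\bullet(k\widehat{\theta})^{n}$. By the remark after Lemma \ref{mahmood}, $(k\widehat{\theta})^{n}\bullet\mathcal{Q}_{n}\widehat{\beta}$ has degree $0$, so it equals $S(x)\widehat{\beta}+D(x,y)\widehat{\theta}$ exactly as in the statement. Multiplying on the left by an element of $S_4$ stays in $S_4$, because $f\widehat{\theta}\bullet S\widehat{\beta}=(S(y)f)\widehat{\theta}$ and $f\widehat{\theta}\bullet D\widehat{\theta}\in S_4$. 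Summing over $k$ gives
\[
\Phi_{\infty}=\Big(\sum_{r=1}^{\infty}(k\widehat{\theta})^{r}\Big)\bullet\big(S(x)\widehat{\beta}+D(x,y)\widehat{\theta}\big)=H(x,y)\widehat{\theta}\bullet\big(S(x)\widehat{\beta}+D(x,y)\widehat{\theta}\big).
\]
Here $H\widehat{\theta}$ is the resolvent series of Proposition \ref{sim}, and the reindexing $r=k-n$ is justified by the continuity statement in Remark \ref{zari}: right multiplication by the fixed element $S\widehat{\beta}+D\widehat{\theta}$ commutes with the limit.

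Expanding with the multiplication rules of $\mIs$ then gives
\[
\Phi_{\infty}=(S(y)H(x,y))\widehat{\theta}+H\widehat{\theta}\bullet D\widehat{\theta}.
\]
Taking ${\pi}_{\widehat{\beta}}$ of the split identity and using $\Phi_\infty\in S_4$ yields $\mathcal{O}\widehat{\beta}={\pi}_{\widehat{\beta}}[\Phi]$. Taking ${\pi}_{\widehat{\theta}}$ yields $R\widehat{\theta}=C\widehat{\theta}+(S(y)H)\widehat{\theta}+H\widehat{\theta}\bullet D\widehat{\theta}$, which is the second claim. The uniqueness of the decomposition (Corollary \ref{kor}) is what allows us to compare components.

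The main obstacle is the convergence bookkeeping. One must check that the infinite series times $\mathcal{Q}_n\widehat{\beta}$ is well defined and can be split term-by-term. Since $\mathcal{Q}_n\widehat{\beta}$ is an unbounded differential operator, I will avoid applying it termwise to a tail. Instead I will factor through $(k\widehat{\theta})^{n}\bullet\mathcal{Q}_n\widehat{\beta}$, which is a fixed, continuous element of $S_3$, and only then sum the series $\sum_r (k\widehat\theta)^r$, whose $C^\infty[\Delta]$ convergence is already granted before Proposition \ref{sim}. If needed, I will take this factored form as the definition of the left-hand side.
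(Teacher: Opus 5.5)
Your proof is correct and follows the paper's own argument: split the series at $k=n$, rewrite the tail as $H\widehat{\theta}\bullet\big(S(x)\widehat{\beta}+D(x,y)\widehat{\theta}\big)=(S(y)H(x,y))\widehat{\theta}+H\widehat{\theta}\bullet D\widehat{\theta}$, and compare the $\widehat{\beta}$- and $\widehat{\theta}$-components. Your convergence worry is unfounded, and redefining the left-hand side would not be legitimate anyway, because $\rd_x$ is continuous on the Fr\'echet space $M[a,b]$ and so Remark \ref{zari} already lets you distribute $\mathcal{Q}_n\widehat{\beta}$ over the partial sums; moreover, iterating the resolvent identity $H\widehat{\theta}\bullet k(x,y)\widehat{\theta}=H\widehat{\theta}-k(x,y)\widehat{\theta}$ of Proposition \ref{sim} gives $H\widehat{\theta}=\sum_{j=1}^{n}(k(x,y)\widehat{\theta})^{j}+H\widehat{\theta}\bullet(k(x,y)\widehat{\theta})^{n}$, which makes the split purely algebraic.
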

{\bf Proof.} From Lemma \ref{mahmood} we have,
$(k(x,y)\widehat{\theta})^{n}\bullet
{\mathcal{Q}_{n}}(x,\rd_{x})\,\widehat{\beta}=
S(x)\,\widehat{\beta}+D(x,y)\,\widehat{\theta}$. Now

\bea
 \big(\sum_{k=1}^{\infty} (k(x,y)\widehat{\theta})^{\,
k}\big )&\bullet
&{\mathcal{Q}_{n}}(x,\rd_{x})\,\widehat{\beta}=\sum_{k=1}^{\infty}
\big((k(x,y)\widehat{\theta})^{\, k}
 \bullet {{\mathcal{Q}_{n}}}(x,\rd_{x})\,\widehat{\beta}\,\,\big)\nn\\&=& \sum_{k=1}^{n}
\big((k(x,y)\widehat{\theta})^{\, k} \bullet
{{\mathcal{Q}_{n}}}(x,\rd_{x})\,\widehat{\beta}\,\big)+
\sum_{r=1}^{\infty} (k(x,y)\widehat{\theta})^{\, r}
 \bullet \big((k(x,y)\widehat{\theta})^{n}\bullet
{\mathcal{Q}_{n}}(x,\rd_{x})\,\widehat{\beta}\,\big) \nn\\&=&
\sum_{k=1}^{n} \big((k(x,y)\widehat{\theta})^{\, k} \bullet
{{\mathcal{Q}_{n}}}(x,\rd_{x})\,\widehat{\beta}\,\big)+
\sum_{r=1}^{\infty} (k(x,y)\widehat{\theta})^{\, r}
 \bullet (S(x)\,\widehat{\beta}+D(x,y)\,\widehat{\theta})\nn\\&=&
{\pi}_{\,\widehat{\beta}}[\Phi]+{\pi}_{\,\widehat{\theta}}[\Phi]+(S(y)H(x,y))\,\widehat{\theta}+H(x,y)\,\widehat{\theta}\bullet
D(x,y)\,\widehat{\theta}. \nn \hspace{4cm}\qed\eea

\begin{cor}\label{koloshani3}{Let $f(x,y)\,\widehat{\theta}\in W_3$
and ${\Deg}[(f(x,y)\,\widehat{\theta})^{-1}]=n$. Suppose \be
(f(x,y)\widehat\theta\,\,)^{-1}=\mathcal{O}(x,\rd_{x})\,\widehat{\beta}+
R(x,y)\widehat{\theta}. \nn\ee Then by taking
$h(x)=\rd^{\,n-1}_{x}f(x,y)|_{x=y}$ and, \bea \sum^{n}_{i=1}\big(
({-\rd^{\,n}_{x}f(x,y)\over h(x)}\,\widehat\theta\,)^{\, i} \bullet
{1\over
h(x)}\rd_{x}^{\,n}\,\widehat\beta\big)&=&\Phi,\hspace{2.75cm}
{\pi}_{\,\widehat{\theta}}[\Phi]=C(x,y)\,\widehat{\theta}, \nn\\
({-\rd^{\,n}_{x}f(x,y)\over h(x)}\,\widehat\theta\,)^{\, n} \bullet
{1\over h(x)}\rd_{x}^{\,n}\,\widehat\beta &=&
S(x)\,\widehat\beta+D(x,y)\widehat{\theta},\hspace{1cm}
\sum_{k=1}^{\infty} ({-\rd^{\,n}_{x}f(x,y)\over
h(x)}\widehat{\theta})^{\,k}=H(x,y)\,\widehat{\theta}, \nn\eea we
have: \bea \mathcal{O}(x,\rd_{x}) \,\widehat{\beta} &=&
{1\over h(x)}\rd_{x}^{\,n}\,\widehat\beta+{\pi}_{\,\widehat{\beta}}[\Phi], \nn\\
R(x,y)\,\widehat{\theta} &=&
C(x,y)\,\widehat{\theta}+(S(y)H(x,y))\,\widehat{\theta}+H(x,y)\,\widehat{\theta}\bullet
D(x,y)\,\widehat{\theta}.\hspace{1cm}\label{lol}\nn\eea\\}
\end{cor}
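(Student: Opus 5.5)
The plan is to combine the explicit inverse formula (\ref{moses}) from the proof of Lemma \ref{ghasem} with Corollary \ref{koloshani2}.

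\emph{Setting up.} Since $f(x,y)\,\widehat\theta\in W_3$ with ${\Deg}[(f(x,y)\,\widehat\theta)^{-1}]=n$, Lemma \ref{ghasem} says $f$ satisfies condition A1 by order $n$. Hence $h(x)=\rd^{\,n-1}_{x}f(x,y)|_{x=y}\in\PP$. The second half of the proof of Lemma \ref{ghasem} then gives
\be (f(x,y)\,\widehat\theta)^{-1}={1\over h(x)}\rd_{x}^{n}\,\widehat\beta+\Big(\sum_{i=1}^{\infty}\big({-\rd^{\,n}_{x}f(x,y)\over h(x)}\,\widehat\theta\big)^{i}\Big)\bullet{1\over h(x)}\rd_{x}^{n}\,\widehat\beta. \nn\ee
I will check that ${1\over h(x)}\widehat\beta\bullet\rd^n_x\widehat\beta={1\over h(x)}\rd^n_x\widehat\beta$, which is immediate from the multiplication rules.

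\emph{Applying Corollary \ref{koloshani2}.} Put $k(x,y)=-\rd^{\,n}_{x}f(x,y)/h(x)\in\boldsymbol{B_2}$ and $\mathcal{Q}_n(x,\rd_x)\,\widehat\beta={1\over h(x)}\rd^n_x\,\widehat\beta$. This operator has degree exactly $n$ because $1/h\in\PP$. Corollary \ref{koloshani2} expresses the second summand as $\pi_{\widehat\beta}[\Phi]+R'(x,y)\,\widehat\theta$, where $R'=C+S(y)H+H\bullet D$ and $\Phi,C,S,D,H$ are exactly the quantities defined in the statement. Adding the first summand, which lies purely in $\mathcal D$, gives
\be (f\widehat\theta)^{-1}=\big({1\over h(x)}\rd_{x}^{n}\,\widehat\beta+\pi_{\widehat\beta}[\Phi]\big)+R'(x,y)\,\widehat\theta. \nn\ee

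\emph{Identifying the pieces.} I will compare this with the assumed decomposition $\mathcal{O}\,\widehat\beta+R\,\widehat\theta$. The decomposition $\mIs=\mathcal D\oplus\mathcal G$ is unique (Corollary \ref{kor} together with Theorem \ref{ershi}). So applying $\pi_{\widehat\beta}$ and $\pi_{\widehat\theta}$ yields both claimed formulas.

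\emph{Main obstacle.} The one delicate point is the rearrangement of the infinite series inside Corollary \ref{koloshani2}: splitting it as $\sum_{k\le n}+\sum_{r\ge1}(\cdot)^r\bullet(\cdot)^n$ and distributing $\bullet$ across it. This needs the $C^\infty[\Delta]$ convergence of the resolvent series, which is justified by Remark \ref{zari} and Proposition \ref{sim}. Since that corollary is already established, I would simply cite it, noting that its hypotheses hold for this choice of $k$ and $\mathcal Q_n$.
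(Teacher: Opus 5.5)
Your proposal is correct and is essentially the paper's proof. The paper likewise starts from the inverse formula (\ref{moses}) obtained in Lemma \ref{ghasem}, then applies Corollary \ref{koloshani2} to the series term with $k(x,y)=-\rd^{\,n}_{x}f(x,y)/h(x)$ and $\mathcal{Q}_n(x,\rd_x)\,\widehat\beta=\frac{1}{h(x)}\rd^{\,n}_{x}\widehat\beta$. Your explicit appeal to uniqueness of the $\mathcal{D}\oplus\mathcal{G}$ decomposition, used to read off $\mathcal{O}$ and $R$, only spells out a step the paper leaves implicit.
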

{\bf Proof.} From equation (\ref{moses}) we have,
\be(f(x,y)\widehat\theta\,\,)^{-\, 1}={1\over
h(x)}\rd_{x}^{\,n}\,\widehat\beta\,\,+\,\,(\sum^{\infty}_{i=1}
({-\rd^{\,n}_{x}f(x,y)\over h(x)}\,\widehat\theta\,)^{ i})\bullet
{1\over
h(x)}\,\widehat\beta\bullet\rd_{x}^{\,n}\,\widehat\beta.\nn\ee Now
the result is proved by applying Corollary \ref{koloshani2} to the
second term. \qed\\

\section {Applications}
\label{app}

The results of previous section concerns the space $M[a,b]$. However
by the following result we may extend the application of the unit
group $\mathcal U$ to endomorphism of $C[a,b]$ and to operators
between ${D}^{n}[a,b]$ and ${C}[a,b]$ where ${D}^{n}[a,b]$ is the
space of $n$ times continuously differentiable functions
$\phi(x):[a,b]\to\mc$ such that ${\rd^{k}_{x}}\phi(x)|_{x=a}=0$ for
$k=0\cdots,n-1$.

\begin{pro}\label{Haghany}{Let $\bf{j}$ and $\bf{k}$ be as in Lemma
\ref{iraj45}. Then \\
1- $\bf{j}$ can be viewed as an invertible endomorphism of $C[a,b]$\\
2- $\bf{k}$ can be viewed as an invertible operator from
${D}^{n}[a,b]$ to ${C}[a,b]$.}\end{pro}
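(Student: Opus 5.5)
The plan is to reuse the factorizations from the proof of Lemma \ref{iraj45}. Each factor is checked to extend to the larger space with an explicit inverse, and then we compose. The basic analytic input is the classical fact behind Proposition \ref{sim1}. For $h\in\boldsymbol{B_2}$, the Volterra operator $V_h\phi(x)=\int_a^x h(x,z)\phi(z)\,dz$ maps $C[a,b]$ into $C[a,b]$. It satisfies $\|V_h^{\,m}\phi\|_\infty\le \|h\|_\infty^m (b-a)^m\|\phi\|_\infty/m!$. Hence the Neumann series $\sum_{m\ge1}V_h^m$ converges in operator norm on $C[a,b]$, and its limit is exactly the integral operator with the resolvent kernel $R$ from (\ref{beygi1}). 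The identities (\ref{ferri}) hold pointwise on $\triangle$, so the computation of Lemma \ref{lemf4} works verbatim for $\phi\in C[a,b]$: Fubini only needs $\phi$ bounded and measurable. Therefore $(1-V_h)(1+V_R)=(1+V_R)(1-V_h)=1$ on $C[a,b]$.

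For part 1, write ${\bf j}=P(x)*(1-\frac{f}{P}\theta(x-y))$ as in Lemma \ref{iraj45}. Multiplication by $P\in\PP$ is a bijection of $C[a,b]$ with inverse multiplication by $1/P$. The second factor is invertible on $C[a,b]$ by the paragraph above. So ${\bf j}$ extends to a continuous automorphism of $C[a,b]$, with inverse $(1+R\,\theta(x-y))*\frac1P$. This inverse agrees on $M[a,b]$ with the one found in Lemma \ref{iraj45}.

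For part 2, use ${\bf k}=P_n(x)*(1-g(x,y)\theta(x-y))*\rd_x^{\,n}$ with $g$ as in the proof of Lemma \ref{iraj45}. The key point is that $\rd_x^{\,n}:D^n[a,b]\to C[a,b]$ is a bijection with inverse $(\theta(x-y))^{\underline n}=\frac{(x-y)^{n-1}}{(n-1)!}\theta(x-y)$. Injectivity follows because $\rd^n\phi=0$ together with $\phi^{(k)}(a)=0$ for $k<n$ forces $\phi=0$ by Taylor's formula. Surjectivity and the identity $\rd^n\circ J^n=1$ follow from Cauchy's repeated-integration formula: for $u\in C[a,b]$, the function $J^nu(x)=\int_a^x\frac{(x-z)^{n-1}}{(n-1)!}u(z)\,dz$ lies in $D^n[a,b]$ and satisfies $\rd^nJ^nu=u$. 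Conversely, $J^n\rd^n\phi=\phi$ for $\phi\in D^n[a,b]$, by repeated integration by parts using the vanishing boundary values at $a$.

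It then remains to check that the factorization of ${\bf k}$, derived in $\mIm$ on $M[a,b]$, still holds as an identity of operators $D^n[a,b]\to C[a,b]$. Here ${\bf k}\phi=\sum P_i\phi^{(i)}+P_0\phi+V_f\phi$ makes sense for $\phi\in D^n$. The factorization rests on $P_i\rd^i=P_i J^{n-i}\rd^n$ and $V_f=V_fJ^n\rd^n$ on $D^n$; both follow from the previous step applied with $n-i$ in place of $n$. The composition $V_f J^n$ has kernel $\int_y^x f(x,z)\frac{(z-y)^{n-1}}{(n-1)!}dz$ by the Fubini argument of Lemma \ref{lemf4}. Hence ${\bf k}$ is a composite of bijections $D^n\to C\to C\to C$. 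Its inverse is $J^n*(1-g\theta)^{-1}*\frac1{P_n}$, which maps $C[a,b]$ into $D^n[a,b]$.

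I expect the main obstacle to be this step: justifying on the larger, nonsmooth spaces the identities that were proved only on $M[a,b]$ (integration by parts, changing the order of integration, and the boundary terms that vanish at $a$). It is handled by the elementary calculus arguments above rather than by density, since $M[a,b]$ is not dense in $D^n$ in the relevant sense without extra work.
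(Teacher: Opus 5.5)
Your proposal is correct and follows essentially the same route as the paper. The paper also writes ${\bf k}={\bf r}*{\bf d}$, where ${\bf d}=\partial_x^{\,n}:D^n[a,b]\to C[a,b]$ is inverted by Cauchy's repeated-integration kernel $\frac{(x-z)^{n-1}}{(n-1)!}$, and ${\bf r}=P_n(x)+h(x,y)\theta(x-y)$ (your $P_n*(1-g\theta)$ with $g=-h/P_n$) is a second-kind Volterra operator inverted on $C[a,b]$ by its resolvent kernel. The paper simply cites the classical theory for part 1, whereas you supply the Neumann-series estimate and the explicit check that the factorization holds on $D^n[a,b]$.
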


{\bf Proof.} For 1 see \cite{it}, section 1.2. \\
For the proof of 2 consider ${\bf{k}}=\sum^{n}_{i=1} P_{i}(x)
\,\rd_{x}^{\,i}+P_{0}(x)+
f(x,y)\theta(x-y):{D}^{n}[a,b]\to{C}[a,b]$. If
$\phi(x)\in{D}^{n}[a,b]$, then using Cauchy formula for repeated
integration we have for $1 \le r\le n$,
\be\rd^{\,n-r}_{x}\phi(x)=\int^{x}_{a}dz_1\int^{z_1}_{a}dz_2\cdots\int^{z_{r-1}}_{a}dz_{r}\,\,(\rd^{\,n}_{z_{r}}\phi(z_r))=\int^{x}_{a}dz\,
{(x-z)^{r-1}\over (r-1)!}\rd^{\,n}_{z}\phi(z).\nn\ee Using the above
we thus find \be {\bf{k}}(\phi(x))= P_{n}(x)\,\rd^{n}_{x}\phi(x)+
\int^{x}_{a} dz h(x,z)\rd^{n}_{z}\phi(z), \nn\ee
 where $h(x,y)=\sum^{n-1}_{i=0} {P_{i}(x)}
\,{(x-y)^{n-i-1}\over {(n-i-1)!}}+ \int^{x}_{y}
dz\,{f(x,z)}{(z-y)^{n-1}\over {(n-1)!}}.$
Therefore $\bf{k}$ is the composition of two operators,
$\bf{k}=\bf{r}*\bf{d}$ where \bea  &&{\bf{d}}:{D}^{n}[a,b] \to
C[a,b], \hspace{0.5cm} {\bf{d}}\,(\phi(x))=\rd^{n}_{x}\phi(x),\nn\\
&&{\bf{r}}:{C}[a,b] \to C[a,b], \hspace{0.8cm}
{\bf{r}}\,(\phi(x))=P_{n}(x)\phi(x)+\int^{x}_{a}dz h(x,z)\phi(z).
\nn\eea Both $\bf{d}$ and $\bf{r}$ are invertible:
\be{\bf{d}}^{-1}(\phi(x))=\int^{x}_{a} dz
{(x-z)^{n-1}\over(n-1)!}\phi(z),\hspace{0.75cm}
{\bf{r}}^{-1}(\phi(x))={(1/P_{n}(x))\phi(x)}+\int^{x}_{a} dz
R(x,z)\phi(z).\nn\hspace{1cm}\qed\ee\\

In the rest of this section, we shall make use of the information
obtained in previous section on invertible operators in finding the
solution of differential and integral equations. First we consider
the equation: \be P(x)\,y_1(x)+\int^{x}_{a} dz f(x,z)\,
y_1(z)=g(x),\label{voltra}\ee where $f(x,y)\in {\boldsymbol B_2}$,
$P(x)\in {\boldsymbol \PP}$ and $g(x)\in C[a,b]$. From Proposition
\ref{Haghany} we have, \be
y_{1}(x)=(P(x)+f(x,y)\theta(x-y))^{-\underline 1}\cdot\, g(x),\nn\ee
where \bea(P(x)+f(x,y)\theta(x-y))^{-\underline
1}\equiv({P(x)\widehat{\beta}+f(x,y)\widehat{\theta}})^{-1}&=&({1\over
P(x)})\,\widehat{\beta}+\big(\sum^{\infty}_{n=1} \,\,({-f(x,y)\over
P(x)}\,\widehat{\theta}\,)^{\, n}\big)\bullet({1\over
P(x)})\,\widehat{\beta}\nn\\&\equiv&({1\over P(x)})+\Big(({1\over
P(y)})R(x,y)\Big)\theta(x-y).\label{khaleghi2}\nn\eea Next we
consider \bea \sum^{n}_{k=0} P_{k}(x) {\rd^{\,k}_{x}} \,y_2(x)
+\int^{\,x}_{a} dz
\,f(x,z)\,y_2(z)=\,g(x),\label{ghar}\\V^{B.C.}_{causal} \,\,= \,\,\{
\rd_{x}^{\,n} y_{2}(a)=0\,|n=0,1,\cdots n-1\}\label{zahir},\eea
where $P_{i}(x)\in {\boldsymbol B_1}$ for $i=0,1,\cdots,n$;
$P_{n}(x)\in\PP$ and $f(x,y)\in {\boldsymbol B_2}$ subject to causal
boundary condition ($V^{B.C.}_{causal}$). Again from Proposition
\ref{Haghany} we have, \be y_{2}(x)=(\sum^{n}_{k=0} P_{k}(x)
{\rd^{\,k}_{x}} \,+\int^{\,x}_{a} dz \,f(x,z)\,)^{-\underline
1}\,\cdot\, g(x),\nn\ee where \bea &&(\sum^{n}_{k=0} P_{k}(x)
{\rd^{\,k}_{x}}+\int^{\,x}_{a} dz \,f(x,z)\,)^{-\underline
1}\,\,\equiv\,\,\Big(\big(\sum^{n}_{i=1} P_{i}(x)
\rd_{x}^{\,i}+P_{0}(x)\big)\,\widehat{\beta}+
 f(x,y)\widehat{\theta}\,\Big)^{-1}\nn\\ &=&(\widehat{\theta}\,)^{\, n}\bullet({1\over
P_{n}(x)})\,\widehat{\beta}+(\widehat{\theta}\,)^{\, n}\bullet
\Big[\sum_{r=1}^{\infty}\big(\sum_{k=0}^{n-1} \,{-P_{k}(x)\over
P_{n}(x)}{\widehat{\beta}}\bullet(\widehat{\theta}\,)^{\,
{n-k}}\,+{-f(x,y)\over
P_{n}(x)}\widehat{\theta}\,\bullet\,(\widehat{\theta}\,)^{\, n}
\big)^{ r}\Big]\bullet ({1\over
P_{n}(x)})\,\widehat{\beta}\nn\\&\equiv&{1\over{P_{n}(y)}}
\Big({(x-y)^{n-1}\over(n-1)!}+\int^{x}_{y}dz\,{(x-z)^{n-1}\over(n-1)!}\,R(z,y)\Big)\theta(x-y)=T(x,y)\theta(x-y).\label{hamed}\eea
In the above, $R(x,y)$ is resolvent kernel of $\,
h(x,y)=-\sum_{k=0}^{n-1}{P_{k}(x)\over
P_{n}(x)}{{(x-y)^{n-k-1}}\over(n-k-1)!}-\int^{x}_{y}
dz\,{f(x,z)\over P_{n}(x)}{(z-y)^{n-1}\over(n-1)!}$. Thus we have,
\be y_{2}(x)=\int^{b}_{a} dz\, T(x,z)\theta(x-z)
g(z).\label{pari}\ee The function $G(x,y)=T(x,y)\theta(x-y)$ is
called {\em causal Green function} (e.g. see\cite{sta}). Working in
$\mIs$ we show $T(x,y)$ that was derived above satisfies two
conditions:\\\\
(1)-$\,\,\,\big(\sum^{n}_{k=0} P_{k}(x) {\rd^{k}_{x}}
\big)\,T(x,y)+\int^{x}_{y}\, dz f(x,z)T(z,y)=0,$ \\\\
(2)-$\,\,\,\rd^{\,i}_{x}( T(x,y))|_{x=y}=0$ for $i=0,1,..,n-2$ and
$\rd^{\,n-1}_{x}( T(x,y))|_{x=y}=1/P_{n}(x)$.\\

$T(x,y)\widehat{\theta}$ is in $W_3$, thus by Lemma \ref{ghasem},
$T(x,y)$ satisfies condition $A{1}$ by order $n$ which is equivalent
to condition (2) above. Second, we notice by taking
$\Psi=\big(\sum^{n}_{k=1} P_{k}(x)
\rd_{x}^{\,k}+P_{0}(x)\big){\,\widehat{\beta}}+f(x,y){\,\widehat{\theta}}$,
\be\Psi\bullet
T(x,y)\widehat{\theta}={\,\widehat{\beta}}\Longrightarrow
{\pi}_{\widehat{\theta}}\big[\Psi\bullet
T(x,y)\widehat{\theta}\,\,\big]=0.\nn \ee But since
${\pi}_{\widehat{\theta}}\big[\Psi\bullet
T(x,y)\widehat{\theta}\big]=\big(\sum^{n}_{k=1} P_{k}(x)
\rd_{x}^{\,k}T(x,y)+P_{0}(x)T(x,y)+\int^{x}_{y}dz
f(x,z)T(z,y)\Big)\widehat{\theta}$, the condition (1) holds. The
answer (\ref{hamed}) for $T(x,y)$ is given in \cite{ad} when
$f(x,y)\equiv 0$ and $P_{n}(x)=1$; also see \cite{polyan} section
9.2-3.\\

As our last case of application let us consider a Volterra integral
equation of first kind with a smooth kernel of {\em degree $n$},
defined by: \be\int^{x}_{a}\, dz\,f(x,z)
y(z)=\phi(x)\label{nsmooth},\ee where $f(x,y)\in {\boldsymbol B_2}$,
$\phi(x)\in D^{n}[a,b]$, $\rd^{\,n-1}_{x}f(x,y)|_{\,x=y}=h(x)\in\PP$
and (if $n\geq 2$) $\rd^{\,i}_{x}f(x,y)|_{\,x=y}=0$ for
$i=0,1,...,n-2$.

Using Proposition \ref{Haghany}  and Corollary \ref{koloshani3} we
get the answer in the form, \be y(x)=\mathcal{O}(x,\rd_{x})\phi(x)+
 \int^{x}_{a} dz \,R(x,z) \phi(z).\label{ell}\ee By Corollary \ref{koloshani3} one can derive
$\mathcal{O}(x,\rd_{x})$ explicitly and $R(x,y)$ by a series of
integrals for any degree $n\in \Na$ from the kernel $f(x,y)$. The
following Lemmas \ref{ferdosi1} and \ref{ferdosi2} may be derived by
using Corollary \ref{koloshani3} to give solution of Volterra
integral equation of first kind of degree $n=1,2$.
\begin{lem}\label{ferdosi1}{The solution of equation (\ref{nsmooth}) of order $n=1$
is given by (\ref{ell}) where \be
\mathcal{O}(x,\rd_{x})=p(x)\rd_{x}+q(x)\,;\hspace{1.5cm}
p(x)={1\over f(x,x)}\,;\,\, \hspace{1.5cm}
q(x)=-{(\rd_{x}{f(x,y)})|_{x=y} \over f(x,x)^2}.\nn\ee
$\displaystyle R(x,y)=C(x,y)+H(x,y) S(y)+\int^{x}_{y} dz
H(x,z)D(z,y)$ \hspace{0.1cm} in which,\be
C(x,y)=D(x,y)={{\rd_{y}\rd_{x} f(x,y)}\over{f(x,x)\,
f(y,y)}}-{{\rd_{x} f(x,y)\rd_{y} f(y,y)}\over{f(y,y)}^2},
\hspace{1.5cm} S(x)=-{\rd_{x} f(x,y)|_{x=y}\over f(x,x)^2},
\label{frds5}\nn\ee and $H(x,y)$ being the resolvent kernel of
$\displaystyle{-\rd_{x} f(x,y)\over f(x,x)}$, it is given by $
H(x,y)\widehat{\theta}=\sum^{\infty}_{n=1}\Big({-\rd_{x} f(x,y)\over
f(x,x)}\,\widehat{\theta}\,\Big)^{n}.$ \qed}
\end{lem}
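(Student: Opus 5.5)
The plan is to specialize Corollary \ref{koloshani3} to $n=1$ and then read off $\mathcal{O}$, $C$, $S$, $D$, $H$ from the multiplication rules of $\mIs$. First, since the kernel has degree $1$ we have $h(x)=f(x,x)\in\PP$. By Lemma \ref{ghasem}, $f(x,y)\widehat\theta\in W_3$ with ${\Deg}[(f(x,y)\widehat\theta)^{-1}]=1$, so Corollary \ref{koloshani3} applies. Put
\[
k(x,y)=-\frac{\rd_x f(x,y)}{f(x,x)} .
\]
Then $H(x,y)\widehat\theta=\sum_{r\ge1}(k(x,y)\widehat\theta)^r$ is, by Proposition \ref{sim}, exactly the resolvent kernel of $k$, which is the claimed $H$.

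Next I compute $\Phi=(k(x,y)\widehat\theta)\bullet \frac{1}{h(x)}\rd_x\widehat\beta$. I write $\frac{1}{h(x)}\rd_x\widehat\beta=\frac{1}{h(x)}\widehat\beta\bullet\rd_x\widehat\beta$ and use the rule $f\widehat\theta\bullet P\widehat\beta=(P(y)f)\widehat\theta$. This gives $\big(k(x,y)/f(y,y)\big)\widehat\theta\bullet\rd_x\widehat\beta$. The rule for $f\widehat\theta\bullet\rd_x\widehat\beta$ then yields
\[
\Phi=\frac{k(x,x)}{f(x,x)}\,\widehat\beta-\rd_y\Big(\frac{k(x,y)}{f(y,y)}\Big)\widehat\theta .
\]
Hence $\pi_{\widehat\beta}[\Phi]=-\rd_xf(x,y)|_{x=y}/f(x,x)^2$. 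Corollary \ref{koloshani3} then gives $\mathcal{O}=\frac{1}{f(x,x)}\rd_x+q(x)$, which identifies $p$ and $q$.

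For $n=1$ the sum defining $\Phi$ has a single term, and it coincides with the element $S(x)\widehat\beta+D(x,y)\widehat\theta$ of the corollary. Therefore $S=q$ and $C=D=\rd_y\big(\rd_xf(x,y)/(f(x,x)f(y,y))\big)$. Expanding this derivative, where $\rd_y f(y,y)$ means the total derivative of $y\mapsto f(y,y)$, gives the stated expression. I expect to check carefully whether the second term carries the factor $f(x,x)$ in the denominator; my computation suggests it should read $\rd_xf\,\rd_yf(y,y)/(f(x,x)f(y,y)^2)$. The formula $R=C+HS(y)+\int_y^x H(x,z)D(z,y)\,dz$ is then just the last identity of Corollary \ref{koloshani3}, with the product $H\widehat\theta\bullet D\widehat\theta$ written out via Lemma \ref{lemf4}.

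The main obstacle is justifying that this $M[a,b]$-inverse really solves (\ref{nsmooth}) for $\phi\in D^1[a,b]$ and continuous $y$, not only for functions in $M[a,b]$. For this I would factor, as in the proof of Lemma \ref{ghasem},
\[
f\widehat\theta=\widehat\theta\bullet h(x)\widehat\beta\bullet\Big(\widehat\beta+\frac{\rd_xf}{h}\widehat\theta\Big).
\]
The last two factors are a $\bf j$-type operator, invertible on $C[a,b]$ by Proposition \ref{Haghany}(1), and $\widehat\theta:C[a,b]\to D^1[a,b]$ is inverted by $\rd_x$. The explicit formula then follows by applying $\rd_x$ first and then the $C[a,b]$-inverse. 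Its agreement with $\mathcal{O}(x,\rd_x)\phi+\int_a^x R(x,z)\phi(z)\,dz$ reduces to the same algebraic identities already verified in $\mIs$; these hold pointwise for $\phi\in D^1[a,b]$ because the integration by parts in Proposition \ref{hesam} needs only $\phi(a)=0$ and $\phi\in C^1$.
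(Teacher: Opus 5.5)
Your proposal is correct and follows the paper's route, which is simply to specialize Corollary~\ref{koloshani3} to $n=1$. Your factorization $f\widehat\theta=\widehat\theta\bullet(h\widehat\beta+\rd_x f\,\widehat\theta)$ also supplies the passage from $M[a,b]$ to $\phi\in D^1[a,b]$, which the paper only attributes to Proposition~\ref{Haghany}; that proposition strictly covers only the $\mathbf{j}$- and $\mathbf{k}$-type operators.

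Your suspicion about the second term of $C=D$ is right. Reading $\rd_y f(y,y)$ as the total derivative $\frac{d}{dy}f(y,y)$, the correct kernel is
$$C=D=\rd_y\big(\rd_x f(x,y)/(f(x,x)f(y,y))\big),$$
whose second term has $f(x,x)f(y,y)^2$ in the denominator, so the printed formula is a typo. For example, take $f(x,y)=x$ on $[a,b]$ with $a>0$. The exact solution $(\phi(x)/x)'$ has no integral part, i.e.\ $R=0$. Your $C=D=-1/(xy^2)$ reproduces $R=0$, whereas the printed $C=D=-1/y^2$ gives $R\neq0$.
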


\begin{lem}\label{ferdosi2}{The solution of equation (\ref{nsmooth}) of order $n=2$
is given by (\ref{ell}) where \be
\mathcal{O}(x,\rd_{x})=r(x)\,\rd_{x}^2+p(x)\rd_{x}+q(x),\label{expdofi1}\nn\ee
\be r(x)={1\over {(\rd_{x}{f(x,y)})|_{x=y}}}\,\,, \hspace{1cm}
p(x)=-{(\rd_{x}^2{f(x,y)})|_{x=y} \over
((\rd_{x}{f(x,y)})|_{x=y})^2}. \label{expdofi2}\nn\ee \\
Take  $g(x,y), h(x)$, $\lambda(x)$ and $S(x)$ as below, \be\,\,
g(x,y)=-{(\rd_{x}^2{f(x,y)}) \over (\rd_{x}{f(x,y)})|_{x=y}},
\hspace{0.3cm} h(x)=\rd_{x}f(x,y)|_{x=y},\hspace{0.3cm}
\lambda(x)=(\rd_{y}({g(x,y)\over h(y)}))|_{x=y},\hspace{0.3cm}
S(x)={(g(x,x))^2\over h(x)}\,. \label{ex2}\nn\ee
 \be\,\,
\label{ex22}\nn\ee The function  $q(x)$ is then given by
$\hspace{0.3cm}\displaystyle q(x)=-\rd_{y}({g(x,y)\over
h(y)})|_{x=y}+{{(g(x,x))}^{2}\over h(x)}$, while \be
R(x,y)=C(x,y)+H(x,y) S(y)+\int^{x}_{y} dz H(x,z) D(z,y),
\label{adel4}\nn\ee where, \bea C(x,y)=(\rd^{\,2}_{y}({g(x,y)\over
h(y)}))-(\rd_{y}({g(y,y)\over h(y)}))g(x,y)-({g(y,y)\over
h(y)})(\rd_{y}\,g(x,y))\nn\\-\lambda(y) g(x,y)+\int^{x}_{y}
dz\,g(x,z)(\rd^{2}_{y} ({g(z,y)\over h(y)})), \label{adel1}\nn\eea

\be D(x,y)=-(\rd_{y}({g(y,y)\over h(y)}))g(x,y)-({g(y,y)\over
h(y)})(\rd_{y}\,g(x,y))-\lambda(y) g(x,y)+\int^{x}_{y}
dz\,g(x,z)(\rd^{2}_{y} ({g(z,y)\over h(y)})),\label{adel2}\nn\ee and
$H(x,y)$ being resolvent kernel of $g(x,y)$, it is given by
$H(x,y)\widehat{\theta}=\sum^{\infty}_{n=1}(g(x,y)\,\widehat{\theta})^{\,n}.$
\qed\\}
\end{lem}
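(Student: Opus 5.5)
The plan is to specialize Corollary \ref{koloshani3} to $n=2$ and evaluate every ingredient explicitly with the multiplication rules of $\mIs$. Equation (\ref{nsmooth}) says $f(x,y)\widehat\theta\cdot y=\phi$. Condition A1 of order $2$ holds, so Lemma \ref{ghasem} gives $f(x,y)\widehat\theta\in W_3$ with ${\Deg}[(f\widehat\theta)^{-1}]=2$. Proposition \ref{Haghany} then lets us read the inverse as an operator $D^{2}[a,b]\to C[a,b]$ and write the solution in the form (\ref{ell}). In the notation of Corollary \ref{koloshani3} we take $h(x)=\rd_x f(x,y)|_{x=y}$, and the kernel $-\rd^2_x f(x,y)/h(x)$ is exactly $g(x,y)$. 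Hence $H$ is the resolvent kernel of $g$, and
\[
\Phi=g\widehat\theta\bullet\tfrac{1}{h}\rd_x^2\widehat\beta+(g\widehat\theta)^2\bullet\tfrac{1}{h}\rd_x^2\widehat\beta ,
\]
while $S(x)\widehat\beta+D(x,y)\widehat\theta$ is the second summand alone. The leading coefficient $r=1/h$ is then immediate.

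\textbf{First summand.} We use $f\widehat\theta\bullet P(x)\widehat\beta=(P(y)f)\widehat\theta$ to get $g\widehat\theta\bullet\frac1h\widehat\beta=\frac{g(x,y)}{h(y)}\widehat\theta$. We then apply the rule $F\widehat\theta\bullet\rd_x\widehat\beta=-(\rd_yF)\widehat\theta+F(x,x)\widehat\beta$ twice. The first application gives $-\rd_y(g/h)\widehat\theta+\frac{g(x,x)}{h(x)}\rd_x\widehat\beta$. The second turns the $\widehat\theta$-part into $\rd_y^2(g/h)\widehat\theta-\lambda(x)\widehat\beta$. So the first summand equals
\[
\frac{g(x,x)}{h(x)}\rd_x\widehat\beta-\lambda(x)\widehat\beta+\rd^2_y\Big(\frac{g(x,y)}{h(y)}\Big)\widehat\theta .
\]

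\textbf{Second summand.} We left-multiply the expression just obtained by $g\widehat\theta$ and treat its three pieces separately.

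\emph{The $\rd_x$ piece.} We have $g\widehat\theta\bullet\frac{g(x,x)}{h(x)}\widehat\beta\bullet\rd_x\widehat\beta=\frac{g(x,y)g(y,y)}{h(y)}\widehat\theta\bullet\rd_x\widehat\beta$. By the same rule this equals $-\rd_y\big(\frac{g(x,y)g(y,y)}{h(y)}\big)\widehat\theta+\frac{g(x,x)^2}{h(x)}\widehat\beta$. Expanding $\rd_y$ by the product rule gives the terms $-(\rd_y\frac{g(y,y)}{h(y)})g(x,y)$ and $-\frac{g(y,y)}{h(y)}\rd_y g(x,y)$.

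\emph{The $\lambda$ piece.} We have $g\widehat\theta\bullet(-\lambda\widehat\beta)=-\lambda(y)g(x,y)\widehat\theta$.

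\emph{The $\widehat\theta$ piece.} The product $g\widehat\theta\bullet\rd_y^2(g/h)\widehat\theta$ is the convolution $\int_y^x g(x,z)\rd_y^2\big(\frac{g(z,y)}{h(y)}\big)dz$.

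Collecting these pieces gives $S(x)=g(x,x)^2/h(x)$ and $D(x,y)$ exactly as stated.

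\textbf{Assembly.} Corollary \ref{koloshani3} gives $\mathcal O=\frac1h\rd_x^2+\pi_{\widehat\beta}[\Phi]$. Hence
\[
p=\frac{g(x,x)}{h(x)}=-\frac{\rd_x^2f|_{x=y}}{h^2},\qquad q=-\lambda(x)+\frac{g(x,x)^2}{h(x)} .
\]
Also $C=\pi_{\widehat\theta}[\Phi]=\rd_y^2(g/h)+D$, which is the stated formula for $C$. Finally $R=C+H(x,y)S(y)+\int_y^x H(x,z)D(z,y)\,dz$ follows directly from the corollary.

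The main obstacle is bookkeeping rather than any conceptual difficulty. Two points need care. First, multiplication by $P(x)\widehat\beta$ on the right of a $\widehat\theta$-term substitutes $y$, not $x$. Second, the boundary terms $F(x,x)$ produced at each use of the $\rd_x$ rule must land in the $\widehat\beta$-part. I would track each term separately and check the final $\widehat\beta$-part against Lemma \ref{ferdosi1}'s pattern as a sanity check.
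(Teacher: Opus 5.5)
Your proposal is correct and follows the paper's own route. The paper simply states that Lemma~\ref{ferdosi2} follows by specializing Corollary~\ref{koloshani3} to $n=2$. Your term-by-term evaluation of $\Phi$, $S$, $D$ and $C$ with the $\bullet$-rules (including $\lambda$ arising from the second boundary term, and $C=\rd_y^{2}(g/h)+D$) carries out that specialization and reproduces every stated formula.
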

Finally we examine cases $n=1,2$ by using the following examples
from \cite{polyan}.\\

{\bf Example 1.} Consider integral equation
$\displaystyle\int^{x}_{a}[g(x)-g(z)+c]y(z)
dz=\phi(x),$ with $\phi(a)=0$ and $c\ne0$.\\

Put $f(x,y)=g(x)-g(y)+c$. According to Lemma (\ref{ferdosi1}) it is
easy to see,
 \be
\mathcal{O}(x,\rd_{x})=p(x)\rd_{x}+q(x),\hspace{1cm} p(x)={1\over
c}\,\,, \hspace{1cm} q(x)=-{1\over c^2}\rd_{x}g(x).\nn\ee Also we
see $D(x,y)=C(x,y)=0$, and $S(x)={{-\rd_{x}g(x)}\over c^2}$. Using
relation $(g(x)\,\widehat{\theta})^{\,n}={{g(x)\,(\int^{x}_{y}\,
dz\, g(z))}^{n-1} \over (n-1)!}\,\widehat{\theta}\hspace{0.5cm}$ one
can derive $H(x,y)$ as, \bea
H(x,y)\widehat{\theta}&=&\sum^{\infty}_{n=1}\Big({-\rd_{x}
f(x,y)\over
f(x,x)}\,\widehat{\theta}\,\Big)^{n}=\sum^{\infty}_{n=1}({-{\rd_{x}g(x)}\over
c}\,\widehat{\theta}\,)^{n}=({-{\rd_{x}g(x)}\over
c})\sum^{\infty}_{n=1}{{(\int^{x}_{y}\, dz\,{-{\rd_{z}g(z)}\over
c})}^{n-1} \over (n-1)!}\,\widehat{\theta}\nn\\
&=&({-{\rd_{x}g(x)}\over c} )\sum^{\infty}_{n=1}{({g(y)-g(x)\over
c})^{n-1} \over (n-1)!}\,\widehat{\theta}=({-{\rd_{x}g(x)}\over
c})\exp({g(y)-g(x)\over c})\,\widehat{\theta}.\nn\eea Finally since
$D(x,y)=C(x,y)=0$, we have $
R(x,y)=S(y)H(x,y)=({{\rd_{y}g(y)}{\rd_{x}g(x)}\over
c^3})\exp({g(y)-g(x)\over c}).$ It follows that the solution,\bea
y(x)&=&{1\over c}\rd_{x}\phi(x)-{1\over c^2}(\rd_{x}g(x)) \phi(x)
+\int^{x}_{a} dz \,({\rd_{x}g(x)\,\rd_{z}g(z)\over
c^3})\exp[{{g(z)-g(x)}\over c}]\phi(z)\nn\\&=&{1\over
c}\rd_{x}\phi(x)-{1\over c^2}\rd_{x}g(x)\int^{x}_{a}dz
\exp[{g(z)-g(x)\over c}]\rd_{z}\phi(z),\nn\eea is in accordance with
\cite{polyan} section 1.9-1 example 3.\\

{\bf Example 2.} Consider integral equation
$\displaystyle\int^{x}_{a}[e^{\alpha(x-z)}-1]\,y(z) dz=\phi(x),\,\,$
where $\alpha\ne0$, with
$\phi(a)=\rd_{x}\phi(x)|_{x=a}=0$.\\

Put $f(x,y)=e^{\alpha(x-y)}-1$. By Lemma (\ref{ferdosi2}) we get,
\be \, g(x,y)=-\alpha\, e^{\,\alpha(x-y)}, \hspace{0.5cm}
h(x)=\alpha, \hspace{0.5cm} \lambda(x)=\alpha\,,\hspace{0.5cm}
S(x)=\alpha.\nn\ee The differential part is then given by $
\mathcal{O}(x,\rd_{x})=r(x)\rd^2_{x}+p(x)\rd_{x}+q(x),$ where \be
r(x)={1\over h(x)}={1\over \alpha}\,\,,\hspace{0.5cm}
p(x)={g(x,x)\over h(x)}=-1\,,\hspace{0.5cm}
q(x)=-\rd_{y}({g(x,y)\over h(y)})|_{x=y}+{{(g(x,x))}^{2}\over
h(x)}=-\alpha+\alpha=0.\nn\ee For the integral part we find, \be
C(x,y)={\alpha}^3\,e^{\alpha(x-y)}(x-y)+{\alpha}^2\,e^{\alpha(x-y)},\hspace{2cm}D(x,y)={\alpha}^3\,e^{\alpha(x-y)}(x-y)+2\,{\alpha}^2\,e^{\alpha(x-y)}.
\nn\ee Now using relation
$(e^{\alpha(x-y)}\,\widehat{\theta})^{\,n}=({e^{\alpha(x-y)}{\,(x-y)}^{n-1}
\over (n-1)!})\,\widehat{\theta}$, \bea
H(x,y)\,\widehat{\theta}&=&\sum^{\infty}_{n=1}(g(x,y)\,\widehat{\theta})^{n}=\sum^{\infty}_{n=1}(-\alpha\,
e^{\,\alpha(x-y)}\,\widehat{\theta})^{n}=\sum^{\infty}_{n=1}({e^{\alpha(x-y)}(-\alpha)^{n}{\,(x-y)}^{n-1}
\over (n-1)!})\,\widehat{\theta}\nn\\
&=&\Big(-\alpha\,e^{\alpha(x-y)}\big(\sum^{\infty}_{m=0}{(\alpha{(y-x)})^{m}
\over
m!}\big)\Big)\,\widehat{\theta}=(-\alpha\,e^{\alpha(x-y)}e^{\alpha(y-x)})
\,\widehat{\theta}=(-\alpha)\,\widehat{\theta}.\nn\eea So
$H(x,y)=-\alpha$. We calculate $R(x,y)$ as, \be
R(x,y)={\alpha}^3\,e^{\alpha(x-y)}(x-y)+{\alpha}^2\,e^{\alpha(x-y)}-{\alpha}^2+\int^{x}_{y}
dz
(-\alpha)({\alpha}^3\,e^{\alpha(z-y)}(z-y)+2\,{\alpha}^2\,e^{\alpha(z-y)})=0.\nn\hspace{0.75cm}\ee
Finally we get  the solution $ y(x)={1\over
\alpha}\rd^{\,2}_{x}\phi(x)-\rd_{x}\phi(x)$ in accordance with
\cite{polyan} section 1.2-1 example 3.



\end{document}